\newtheorem{theorem}{Theorem}[section]
\newtheorem{definition}[theorem]{Definition}
\newtheorem{lemma}[theorem]{Lemma}
\newtheorem{remark}[theorem]{Remark}
\newtheorem{proposition}[theorem]{Proposition}
\newtheorem{notation}[theorem]{Notation}
\def\cal{\mathcal}
\def\Bbb{\mathbb}
\def\C{{\Bbb C}}
\def\R{{\mathbb R}}
\def\<{\left<}
\def\>{\right>}
\def\S{{\cal S}}
\def\J{{\cal J}}
\def\BMO{{\rm BMO}}
\def\CMO{{\rm CMO}}
\def\Im{{\rm Im}\,}
\def\Re{{\rm Re}\,}
\def\({( \hspace{-0.335em}(}
\def\){) \hspace{-0.335em})}
\def\fu2{\frac{n}{2}}
\def\l({\left(}
\def\r){\right)}
\def\be{\begin{enumerate}}
\def\ee{\end{enumerate}}
\def\sign{{\rm sign}}
\def \diam {{\rm diam}}
\def \rdist {{\rm \, rdist}}
\def \ec {{\rm ec}}
\title[\tiny Endpoint compactness of singular integrals]
{Endpoint compactness of singular integrals and perturbations of the Cauchy integral}
\author{Karl-Mikael Perfekt} \address{Department of Mathematical Sciences, Norwegian University of Science and Technology (NTNU), NO-7491 Trondheim, Norway} \email{karl-mikael.perfekt@math.ntnu.no}
\author{Sandra Pott}
\address{Centre for Mathematical Sciences, University of Lund, PO Box 118, 22100 Lund, Sweden}
\email{sandra@maths.lth.se}
\author{Paco Villarroya}
\address{Centre for Mathematical Sciences, University of Lund, PO Box 118, 22100 Lund, Sweden}
\email{paco.villarroya@maths.lth.se}
\thanks{The second author has been partially supported by the Crafoord Foundation. 
The last author has been partially supported by the Spanish Ministry of Economy and Competitiveness (project grants MTM2011-23164 and MTM2014-53009-P)}
\subjclass[2010]{Primary 42B20, 42B25, 42C40; Secondary 47G10}
\keywords{Singular integral, Calder\'on-Zygmund operator, Compact operator, Cauchy integral}
\begin{document}

\maketitle

\begin{abstract}
We prove sufficient and necessary conditions for compactness of Calder\'on-Zygmund operators
on the endpoint from $L^{\infty }(\mathbb R)$ into $\CMO(\mathbb R)$. 
We use this result to prove
compactness on $L^{p}(\mathbb R)$ with $1<p<\infty $ of certain perturbations of the Cauchy integral on curves with normal derivatives satisfying a $\CMO$-condition.

\end{abstract}

\section{Introduction}
In \cite{V}, 
we started a general theory to characterize
compactness of singular integral operators.
More precisely, we showed that a Calder\'on-Zygmund operator $T$
is compact on $L^p(\R)$ with $1<p<\infty $ if and only three conditions hold: the operator kernel satisfies the definition of 
a compact 
Calder\'on-Zygmund kernel, a strengthening of the smoothness condition of a standard 
Calder\'on-Zygmund kernel;
$T$ satisfies a new property of
\textit{weak compactness}, analogue to the classical \textit{weak boundedness};  
and the functions $T(1)$ and $T^*(1)$ belong to the space
$\CMO(\R)$, the appropriate substitute of $\BMO(\R)$.  

Now, the purpose of the current paper is to continue this study in two different but related ways. First, 
we extend the results appearing in \cite{V} to one of the endpoint cases, namely, from 
$L^{\infty}(\mathbb R)$  
into $\CMO(\mathbb R)$ (Theorem \ref{Mainresult2}). 
For this purpose, we follow a new approach, based on the study of boundedness of a modified Martingale Transform, Proposition \ref{modifiedmartingaletransform}, 
which substitutes the classical square function and, to the authors knowledge, has not been studied before.
Second, we use the latter result to provide 
an application of the general theory by showing how the methods devised in \cite{V} allow to prove compactness on $L^{p}(\mathbb R)$ of 
certain perturbation of the Cauchy integral operator defined over Lipschitz curves with $\CMO$-smooth normal derivatives, Proposition \ref{applprop}. 

The paper is structured as follows: in Section 2, we give the necessary definitions, we state the main results of \cite{V} that there will be needed and we also state our main result in this paper, Theorem \ref{Mainresult}; in Section 3 we characterize compactness of Calder\'on-Zygmund operators on the endpoint case $L^{\infty} 
\to \CMO$. Finally, in Section 4 we provide an application of the theory by proving compactness of the before described perturbation of the Cauchy integral. 

\section{Definitions and statement of the main result}
\subsection{Definitions and notation}
\begin{definition}\label{admissible}
We say that three bounded functions $L,S, D: [0,\infty )\rightarrow [0,\infty )$
constitute a set of admissible functions if 
the following limits hold
\begin{equation}\label{limits}
\lim_{x\rightarrow \infty }L(x)=\lim_{x\rightarrow 0}S(x)=\lim_{x \rightarrow \infty }D(x)=0.
\end{equation}

\end{definition}

\begin{remark}\label{constants}
Since any fixed dilation of an admissible function $L_{\lambda}(x)=L(\lambda^{-1}x)$ is again admissible, we will often omit all universal constants appearing in the argument of these functions. 
\end{remark}

\begin{definition}\label{prodCZ}
Let $\Delta $ be the diagonal of $\mathbb R^{2}$.
Let $L,S,D$ be admissible functions. 

A function $K:(\R^{2} \setminus \Delta )\to \mathbb C$ is called a
compact Calder\'on-Zygmund kernel if it is bounded on compact sets of $\R^{2} \setminus \Delta $ and 
for some $0<\delta \leq 1$ and $C>0$, we have
\begin{multline*}
|K(t, x)-K(t',x')|
\le 
C\frac{(|t-t'|+|x-x'|)^\delta}{|t-x|^{1+\delta}}
L(|t-x|)S(|t-x|)
D(|t+x|),
\end{multline*}
whenever $2(|t-t'|+|x-x'|)<|t-x|$. 
\end{definition}

As shown in \cite{V}, it can be assumed without loss of generality 
that  in Definition \ref{prodCZ} 
the functions 
$L$ and $D$ are monotone non-increasing while the function $S$ is monotone non-decreasing. 

We also remark that there is an equivalent definition of compact Calder\'on-Zygmund kernels  
which is more convenient to use in applications of the theory (see Section 4). 
In \cite{V}, we show that 
Definition \ref{prodCZ} 
 is equivalent to the existence of a bounded function 
$B: \mathbb R^{2}\rightarrow [0,\infty )$ such that 
$$
\lim_{|t-x|\rightarrow \infty }B(t,x)=\lim_{|t-x|\rightarrow 0}B(t,x)=\lim_{|t+x| \rightarrow \infty }B(t,x)=0
$$
and for some $0<\delta \leq 1$ and $C>0$, 
$$
|K(t,x)-K(t',x')|
\le 
C\frac{(|t-t'|+|x-x'|)^\delta}{|t-x|^{1+\delta}}B(t,x),
$$
whenever $2(|t-t'|+|x-x'|)<|t-x|$.

\begin{definition}\label{SchwartzN} 
For every $N\in \mathbb N$, $N\geq 1$, we define $\S_{N}(\R)$ to be the set of 
all functions 
$f\in {\mathcal C}^{N}(\mathbb R)$
such that 
$$
\| f\|_{m,n}=\sup_{x\in \mathbb R}|x|^{m}|f^{(n)}(x)|<\infty
$$ 
for all $m,n\in \mathbb N$ with $m,n\leq N$. Clearly, $\S_{N}(\R)$ 
equipped with the family of seminorms $\| \cdot \|_{m,n}$
is a Fr\'echet space. Then,   
we can also define its dual space $\S_{N}'(\R)$ equipped with the dual topology which turns out to be
a subspace of the space of tempered distributions. 
\end{definition}

\begin{definition}\label{intrep}
Let $T:\S_{N}(\R)\to \S_{N}'(\R)$ be a linear operator which is continuous with respect the topology of 
$\S_{N}(\R)$ for a fixed $N\geq 1$.

We say that 
$T$ is associated with a compact Calder\'on-Zygmund kernel $K$ if 
the action of $T(f)$ as a distribution satisfies the following 
integral representation
$$
\langle T(f), g\rangle =\int_{\R}\int_{\R} f(t)g(x) K(t,x)\, dt \, dx
$$
for all functions $f,g\in {\cal S_{N}(\R)}$
with disjoint compact supports.
\end{definition}

\begin{definition}
For $0<p\leq \infty $ and $N\in \mathbb N$, we say that a function $\phi \in {\mathcal S}_{N}(\mathbb R)$ is 
an $L^p(\mathbb R)$-normalized bump function adapted to $I$ with constant $C>0$ and order $N$,
if it satisfies 
$$|\phi^{(n)}(x)|\le C\frac{1}{|I|^{\frac{1}{p}+n}}\Big(1+\frac{|x-c(I)|}{|I|}\Big)^{-N}, \ \ \ 0\leq  n \le N$$
for every interval $I\subset \mathbb R$, where we denote its centre by $c(I)$ and its length by $|I|$.

\end{definition}

The order of the bump functions
will always be denoted by $N$, even though its value might change from line to line.
We will often use the greek letters $\phi $, $\varphi $ for general bump functions while
we reserve the use of $\psi $ to denote bump functions with mean zero.
If not otherwise stated, we will usually assume that bump functions are $L^2(\mathbb R)$-normalized.

A result we will use in forthcoming sections is the following property of bump functions whose 
proof can be found in \cite{TLec}:

\begin{lemma}\label{decayofidentity} Let $I$, $J$ be intervals and let $\phi_I $, $\varphi_J$ be bump functions $L^2$-adapted to
$I$ and $J$ respectively with order $N$ and constant $C>0$. Then,
$$
|\langle \phi_I,\varphi_J\rangle|\leq C \left(\frac{\min(|I|,|J|)}{\max (|I|,|J|)}\right)^{1/2}
\left( \frac{{\rm diam}(I\cup J)}{\max(|I|,|J|)} \right)^{-N}.
$$
Moreover, if $|J|\leq |I|$ and $\psi_J$ has mean zero then 
$$
|\langle \phi_I,\psi_J\rangle|\leq C \left(\frac{|J|}{|I|}\right)^{3/2}
\left( \frac{{\rm diam}(I\cup J)}{|I|} \right)^{-(N-1)}.
$$
\end{lemma}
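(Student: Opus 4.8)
The plan is to establish both inequalities by a direct estimation of the integrals defining the pairings, using the pointwise size and derivative bounds of the bump functions and, for the second inequality, the cancellation $\int \psi_J=0$. By the symmetry of the first statement we may assume throughout that $|J|\le|I|$; we abbreviate $d=\diam(I\cup J)$ and $\rho=|c(I)-c(J)|$, and we record that $d$ is comparable to $|I|+\dist(I,J)$, so that $(d/|I|)^{-M}\approx(1+\dist(I,J)/|I|)^{-M}$. It is convenient to treat separately the \emph{close regime} $\dist(I,J)\lesssim|I|$, in which $d\approx|I|$ and the two right-hand sides reduce, up to constants, to $(|J|/|I|)^{1/2}$ and $(|J|/|I|)^{3/2}$, and the \emph{far regime} $\dist(I,J)\gg|I|\ge|J|$, in which $d\approx\rho$ and $|x-c(I)|+|x-c(J)|\ge\rho$ for every $x\in\R$.

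For the first inequality one inserts $|\phi_I(x)|\lesssim|I|^{-1/2}(1+|x-c(I)|/|I|)^{-N}$ and $|\varphi_J(x)|\lesssim|J|^{-1/2}(1+|x-c(J)|/|J|)^{-N}$ into $|\langle\phi_I,\varphi_J\rangle|\le\int|\phi_I||\varphi_J|$. In the close regime one bounds the $\phi_I$-factor by $1$ and integrates the $\varphi_J$-factor to get $\lesssim|I|^{-1/2}|J|^{-1/2}\cdot|J|=(|J|/|I|)^{1/2}$. In the far regime one splits $\R$ according to whether $|x-c(I)|\le\rho/2$ or $|x-c(I)|>\rho/2$: on the first set $|x-c(J)|\ge\rho/2$, so the $\varphi_J$-factor is $\lesssim(|J|/\rho)^N$ and the $\phi_I$-factor integrates to $\lesssim|I|$, while on the second set the $\phi_I$-factor is $\lesssim(|I|/\rho)^N$ and the $\varphi_J$-factor integrates to $\lesssim|J|$; using $|I|(|J|/\rho)^N\le|J|(|I|/\rho)^N$, which holds because $|J|\le|I|$, both pieces are $\lesssim|J|(|I|/\rho)^N$, and after the prefactor $|I|^{-1/2}|J|^{-1/2}$ this gives $\lesssim(|J|/|I|)^{1/2}(\rho/|I|)^{-N}$, as claimed. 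Since the integrands decay rapidly, one may, in accordance with the paper's convention on the order $N$, take $N$ as large as is needed for the tail integrals to converge.

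For the second inequality one uses $\int\psi_J=0$ to write $\langle\phi_I,\psi_J\rangle=\int\big(\phi_I(x)-\phi_I(c(J))\big)\overline{\psi_J(x)}\,dx$ and estimates the difference by the mean value theorem together with $|\phi_I'(\xi)|\lesssim|I|^{-3/2}(1+|\xi-c(I)|/|I|)^{-N}$. The extra factor $|x-c(J)|$ supplied by the mean value theorem is what upgrades the power $(|J|/|I|)^{1/2}$ of the first estimate to $(|J|/|I|)^{3/2}$: on the core $|x-c(J)|\lesssim|J|$ of $\psi_J$ one bounds $|x-c(J)|$ by $|J|$ and $\int|\psi_J|$ over the core by $\lesssim|J|^{1/2}$, obtaining $\lesssim|I|^{-3/2}|J|\cdot|J|^{1/2}=(|J|/|I|)^{3/2}$, and in the far regime the supremum of $|\phi_I'|$ over the relevant segment is additionally $\lesssim|I|^{-3/2}(\rho/|I|)^{-N}$. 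The step that requires care is the tail of $\psi_J$: on the part $|J|<|x-c(J)|\lesssim\rho$ one must keep the mean value bound (the crude $|\phi_I(x)-\phi_I(c(J))|\lesssim\|\phi_I\|_\infty$ would only yield $(|J|/|I|)^{1/2}$), so that the weight $|x-c(J)|(1+|x-c(J)|/|J|)^{-N}$ integrates to $\lesssim|J|^2$; on the remaining part $|x-c(J)|\gtrsim\rho$ one estimates $|\phi_I(x)-\phi_I(c(J))|\le|\phi_I(x)|+|\phi_I(c(J))|$ and balances the size bound for $\phi_I$ against the decay of $\psi_J$ about $c(J)$, and the decay of $\phi_I$ about $c(I)$ against the size bound for $\psi_J$. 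Adding the core and tail contributions in both regimes gives $|\langle\phi_I,\psi_J\rangle|\lesssim(|J|/|I|)^{3/2}(d/|I|)^{-(N-1)}$.

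The main obstacle is organisational rather than conceptual — choosing the region decompositions so that each piece is dominated by the stated right-hand side. It is worth noting where the exponent $-(N-1)$ in the second inequality, weaker by one unit than the $-N$ of the first, is incurred: on the core and near-tail of $\psi_J$ every estimate retains the full order $N$, and it is only on the part of $\R$ lying far from both $I$ and the core of $\psi_J$, where one merely pairs a size bound with the full decay of the other factor, that a single order is spent. As the order $N$ may be taken arbitrarily large, no further optimisation is required.
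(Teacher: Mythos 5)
The paper does not actually prove this lemma; it cites Thiele's lecture notes \cite{TLec} for the proof, so there is no in-paper argument to compare you against. Your direct proof --- splitting $\R$ into a near region and a far region, pairing pointwise decay of one factor against the $L^1$ size of the other, and for the mean-zero case subtracting $\phi_I(c(J))$ and invoking the mean value theorem before repeating the decomposition --- is the standard route and matches what one finds in such references. Your accounting of the exponents is correct: in the close regime $d\approx|I|$ so the distance factor is trivial; in the far regime $d\approx|c(I)-c(J)|$ and each side of the split $\{|x-c(I)|\lessgtr\rho/2\}$ produces $(|J|/|I|)^{1/2}(|I|/\rho)^N$ after using $|J|\le|I|$; and the passage from $(|J|/|I|)^{1/2}$ to $(|J|/|I|)^{3/2}$ via the extra factor $|x-c(J)|$ supplied by the MVT is exactly right, with the single order loss $N\to N-1$ indeed originating in the tail integral of $\psi_J$.

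One place to be precise when writing this out in full: on the outermost tail $|x-c(J)|\gtrsim\rho$ of the second estimate, the pairing of $|\phi_I(c(J))|\lesssim|I|^{-1/2}(|I|/\rho)^N$ with a bare size bound $\int|\psi_J|\lesssim|J|^{1/2}$ only gives $(|J|/|I|)^{1/2}(|I|/\rho)^N$, which is not in general $\lesssim(|J|/|I|)^{3/2}(|I|/\rho)^{N-1}$ (e.g.\ $|I|^2>|J|\rho$ is possible even with $|J|\le|I|\lesssim\rho$). You should instead retain the tail decay of $\psi_J$ on that region, i.e.\ use
$\int_{|x-c(J)|\gtrsim\rho}|\psi_J|\lesssim|J|^{1/2}(|J|/\rho)^{N-1}$,
which together with $|\phi_I(c(J))|\lesssim|I|^{-1/2}(|I|/\rho)^N$ closes the estimate since $|I|^2|J|^{N-2}\le\rho^N$ under $|J|\le|I|\lesssim\rho$. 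The companion term $\int_{|x-c(J)|\gtrsim\rho}|\phi_I||\psi_J|$ is handled as you say, with the pointwise decay of $\psi_J$ and $\int|\phi_I|\lesssim|I|^{1/2}$. With that adjustment the argument is complete; as you note, $N$ may be taken as large as the convergence of the tail integrals requires, consistent with the paper's convention.
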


\begin{notation}\label{ecandrdist}
We now introduce some notation which will be frequently used throughout the paper.  
We denote by $\mathbb B=[-1/2,1/2]$ and $\mathbb B_{\lambda }=\lambda \mathbb B=[-\lambda/2,\lambda /2]$.

Given two intervals $I,J\subset \mathbb R$, 
we define $\langle I,J\rangle$ as the smallest interval containing $I\cup J$ and  
we denote its measure by $\diam(I\cup J)$. Notice that 
\begin{eqnarray*}
\diam(I\cup J) & \approx & |I|/2+|c(I)-c(J)|+|J|/2.
\end{eqnarray*}

We also define
the relative distance between $I$ and $J$ by
$$
\rdist(I,J)=\frac{\diam(I\cup J)}{\max(|I|,|J|)},
$$
which is comparable to $\max(1,n)$ where $n$ is  
the smallest number of times the larger interval needs to be shifted a distance equal to its side length 
so that it contains the smaller one. 
Notice that 
\begin{eqnarray*}
\rdist(I,J) & \approx & 1+\frac{|c(I)-c(J)|}{\max(|I|,|J|)}.
\end{eqnarray*}

Finally, we  define the eccentricity of $I$ and $J$ to be
$$
\ec(I,J)=\frac{\min(|I|,|J|)}{\max (|I|,|J|)}.
$$

\end{notation}

\begin{definition}\label{WB}
A linear operator $T : \S_{N}(\R) \to \S_{N}'(\R)$ with $N\geq 1$ satisfies the weak compactness condition, if 
there exist admissible functions $L, S, D$ 
such that: for every $\epsilon>0$ there 
exists $M\in \mathbb N$ so that for any interval $I$ and every pair $\phi_I, \varphi_I$ of
$L^2$-normalized bump functions adapted to $I$ with constant $C>0$ and order $N$, we have
\begin{equation}\label{weakcompactnessformula}
|\langle T(\phi_I),\varphi_I)\rangle |\lesssim C(L(2^{-M}|I|)S(2^{M}|I|)
D(M^{-1}\rdist(I,\mathbb B_{2^{M}}))+\epsilon ),
\end{equation}
where the implicit constant only depends on the operator $T$. 
\end{definition}

\begin{remark}
We note that in the main results of the paper, namely Theorem \ref{Mainresult2} or Theorem \ref{BMObounds}, when we say that $T$ satisfies the weak compactness condition, we mean that there is an integer $N\geq 1$ sufficiently large depending on the operator or its kernel so that the operator can be defined $T : \S_{N}(\R) \to \S_{N}'(\R)$, it is continuous with respect the topology in 
$\S_{N}(\R) $ and it satisfies Definition \ref{WB} for that value of $N$. 
\end{remark}

In \cite{V} we discuss other equivalent formulations of this Definition.

From now on, we will denote
$$
F_{K}(I)=L_{K}(|I|)S_{K}(|I|)D_{K}(\rdist(I,\mathbb B))
$$
and 
$$
F_{W}(I;M)
=L_{W}(2^{-M}|I|)S_{W}(2^{M}|I|)D_{W}(M^{-1}\rdist(I,\mathbb B_{2^{M}})), 
$$
where $L_{K}$, $S_{K}$ and $D_{K}$ are the functions appearing in the definition of a compact 
Calder\'on-Zygmund kernel,
while $L_{W}$, $S_{W}$, $D_{W}$ and the constant $M$ are as in the definition of the weak compactness condition. Note 
that the value $M = M_{T,\epsilon}$ depends not only on $T$ but also on $\epsilon$. 

We will also denote $F(I;M)=F_{K}(I)+F_{W}(I;M)$,
$$
F_{K}(I_{1},\cdots, I_{n})=\big( \sum_{i=1}^{n}L_{K}(|I_{i}|)\big) \big( \sum_{i=1}^{n}S_{K}(|I_{i}|)\big)
\big(\sum_{i=1}^{n}D_{K}(\rdist(I_{i},\mathbb B))\big)
$$
\begin{eqnarray*}
\hspace{-1.5cm}
F_{W}(I_{1},\cdots, I_{n};M)&=&\big(\sum_{i=1}^{n}L_{W}(2^{-M}|I_{i}|)\big) \big(\sum_{i=1}^{n}S_{W}(2^{M}|I_{i}|)\big)\\
&&\big(\sum_{i=1}^{n}D_{W}(M^{-1}\rdist(I_{i},\mathbb B_{2^{M}}))\big)
\end{eqnarray*}
and 
$
F(I_{1},\cdots, I_{n};M)=F_{K}(I_{1},\cdots, I_{n})+F_{W}(I_{1},\cdots, I_{n};M)
$.

\subsection{Characterization of compactness. The Lagom Projection operator}
In order to prove our results about compact singular integral operators, we will use the following 
characterization of compact operators in a Banach space with a Schauder basis (see \cite{Fab}).

\begin{theorem}\label{charofcompact}
Suppose that $\{e_{n}\}_{n\in \mathbb N}$ is a Schauder basis of a Banach space $E$. For each positive integer $k$, let $P_{k}$ be the canonical projection,
$$
P_{k}(\sum_{n\in \mathbb N}\alpha_{n}e_{n})=\sum_{n\leq k}\alpha_{n}e_{n}.
$$
Then, a bounded linear operator $T:E \to E$ is compact if and only if $P_{k}\circ T$ converges to $T$ in operator norm.
\end{theorem}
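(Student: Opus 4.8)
The plan is to prove both implications using the uniform boundedness of the partial sum projections $P_k$ that comes with any Schauder basis. First recall that since $\{e_n\}$ is a Schauder basis of $E$, the projections $P_k$ are bounded and, by the Banach--Steinhaus theorem applied to the pointwise-convergent sequence $P_k x \to x$, there is a finite constant $\Lambda = \sup_k \|P_k\| < \infty$ (the basis constant).

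For the easy direction, suppose $P_k \circ T \to T$ in operator norm. Each $P_k \circ T$ has finite-dimensional range (contained in $\mathrm{span}\{e_1,\dots,e_k\}$), hence is compact; a norm-limit of compact operators is compact, so $T$ is compact. The main work is the converse. Assume $T$ is compact; I want $\|T - P_k T\| = \|(I - P_k)T\| \to 0$. Suppose not: then there is $\epsilon > 0$ and a subsequence $k_j \to \infty$ together with unit vectors $x_j \in E$ with $\|(I - P_{k_j}) T x_j\| \geq \epsilon$. Since $T$ is compact and $\{x_j\}$ is bounded, after passing to a further subsequence we may assume $T x_j \to y$ in $E$ for some $y \in E$. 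Now estimate
\[
\|(I - P_{k_j}) T x_j\| \leq \|(I - P_{k_j})(Tx_j - y)\| + \|(I - P_{k_j}) y\| \leq (1 + \Lambda)\|Tx_j - y\| + \|(I - P_{k_j}) y\|.
\]
The first term tends to $0$ because $Tx_j \to y$, and the second term tends to $0$ because $P_k y \to y$ by the definition of a Schauder basis (here we use that $y$ is a fixed element). This contradicts $\|(I - P_{k_j}) T x_j\| \geq \epsilon$, so $\|(I - P_k)T\| \to 0$, i.e.\ $P_k \circ T \to T$ in operator norm.

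The only genuinely delicate point is the uniform bound $\Lambda = \sup_k \|P_k\| < \infty$: without it the splitting above would not control $\|(I-P_{k_j})(Tx_j - y)\|$ uniformly in $j$. This is exactly the classical fact that the coordinate projections of a Schauder basis are uniformly bounded, which follows from completeness of $E$ via the Banach--Steinhaus theorem (or, equivalently, by renorming $E$ with $\|x\|' = \sup_k \|P_k x\|$ and checking it is a complete norm equivalent to the original). Everything else is a routine compactness-plus-triangle-inequality argument. I would cite \cite{Fab} for the statement and include the short argument above for completeness.
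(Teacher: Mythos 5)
The paper does not prove Theorem \ref{charofcompact}; it cites it directly from \cite{Fab} and moves on, so there is no in-paper proof to compare against. Your argument is correct and is essentially the standard textbook proof: uniform boundedness of the basis projections (the basis constant $\Lambda$) gives $\|I - P_k\| \le 1 + \Lambda$; the finite-rank direction is immediate; and the converse follows because a uniformly bounded sequence of operators converging strongly to the identity converges uniformly on the relatively compact set $T(B_E)$. Your contradiction/subsequence phrasing is a legitimate packaging of that last point (an equivalent, arguably cleaner, route is the direct $\varepsilon$-net argument on $\overline{T(B_E)}$, which avoids extracting subsequences). One cosmetic remark: from $\|(I-P_{k_j})T\| \ge \epsilon$ you can only guarantee unit vectors $x_j$ with $\|(I-P_{k_j})Tx_j\| \ge \epsilon/2$ (or $\ge \epsilon - 1/j$), not $\ge \epsilon$; this does not affect the contradiction but should be stated precisely. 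With that small adjustment the proof is complete and correctly identifies the one nontrivial ingredient, namely $\sup_k \|P_k\| < \infty$.
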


\begin{definition} \label{Imdef}
For every $M\in \mathbb N$, let ${\cal I}_{M}$ be the family of intervals such that 
$2^{-M}\leq |I|\leq 2^{M}$ and 
$\rdist(I,\mathbb B_{2^{M}})\leq M$. Let ${\mathcal D}$ be the family of dyadic intervals of the real line and ${\cal D}_{M}$ be the intersection of ${\cal I}_{M}$ with ${\mathcal D}$. We call the intervals in ${\cal I}_{M}$ and ${\cal D}_{M}$ as lagom intervals and dyadic lagom intervals respectively.
\end{definition}

Notice that $I\in {\cal D}_{M}$ implies that   $2^{-M}(2^{M}+|c(I)|)\leq M$ and then
$|c(I)|\leq (M-1)2^{M}$.  
Therefore, $I\subset \mathbb B_{M2^{M}}$ with
$2^{-M}\leq |I|$.  

On the other hand, $I\notin {\cal D}_{M}$ implies either $|I|>2^{M}$ or 
$|I|<2^{-M}$ or $2^{-M}\leq |I|\leq 2^{M}$ with $|c(I)|>(M-1)2^{M}$.

Let $E$ be one of the following Banach spaces: the Lebesgue space $L^{p}(\mathbb R)$, $1<p<\infty $, the Hardy space $H^{1}(\mathbb R)$, or the space $\CMO(\R)$, to be introduced later as the closure in $\BMO(\mathbb R)$ of continuous functions vanishing at infinity. In each case, $E$ is equipped with smooth wavelet bases which are also Schauder bases (see \cite{HerWeiss} and Lemma \ref{lem:cmochar}). Moreover, in all cases, we have at our disposal smooth and compactly supported wavelet bases. 
 
\begin{definition}\label{lagom}
Let $E$ be one of the previously mentioned Banach spaces.  
Let $(\psi_{I})_{I\in {\mathcal D}}$ be a wavelet basis of $E$. 
Then, for every $M\in \mathbb N$,
we define the lagom projection operator $P_{M}$ by 
$$
P_{M}(f)=\sum_{I\in {\cal D}_{M}}\langle f,\psi_{I}\rangle \psi_{I},
$$
where $\langle f,\psi_{I}\rangle =\int_{\mathbb R}f(x)\overline{\psi(x)}dx$.

We also define the orthogonal lagom projection operator as $P_{M}^{\perp }(f)=f-P_{M}(f)$. 
\end{definition}

\begin{remark}
Without explicit mention, we will let the wavelet basis defining $P_M$ vary from proof to proof to suit our technical needs.

We also note the use of the same notation for the action of $T(f)$ as a distribution
and the inner product.
We hope that this will not cause confusion. 
\end{remark}

It is easy to see that both $P_M$ and $P_{M}^{\perp }$ are self-adjoint operators.

We note the difference with the usual projection operator, ${\mathcal P}_{Q}$ for every interval
$Q \subset \mathbb R$, defined by 
\begin{equation}\label{regularproj}
{\mathcal P}_{Q}(f)=
\sum_{\displaystyle{\tiny \begin{array}{c}I\in {\cal D}\\ I\subset Q \end{array}}}\langle f,\psi_{I}\rangle \psi_{I}, 
\end{equation}
which we will also use in forthcoming sections.

Let $S$ denote the square function operator associated with a wavelet basis $(\psi_{I})_{I\in \mathcal D}$
$$
S(f)(x)=\Big( \sum_{I\in {\cal D}}\frac{|\langle f, \psi_{I}\rangle |^{2}}{|I|}\chi_{I}(x)\Big)^{1/2}.
$$
Since we trivially have the pointwise estimates $S(P_{M}(f))(x)\leq S(f)(x)$ and $S(P_{M}^{\perp}(f))(x)\leq S(f)(x)$, by Littlewood-Paley theory, we deduce that 
the lagom projection operator and its orthogonal projection are both continuous on $L^{p}(\mathbb R)$ for all $1<p<\infty $. Moreover, 
the estimate 
$$
\| P_{M}(f)\|_{\BMO(\mathbb R)}
=\sup_{Q \subset \mathbb R} \Big(\frac{1}{|Q|}
\sum_{\tiny \begin{array}{c}I\in {\cal D}_{M}\\ I\subset Q \end{array}}|\langle f,\psi_{I}\rangle |^{2}\Big)^{1/2}
\leq \| f\|_{\BMO(\mathbb R)}
$$
shows that $P_{M}$ is bounded on $\BMO(\mathbb R)$  and, by duality, on $H^{1}(\mathbb R)$ 
with $\|P_{M}\|_{\BMO(\mathbb R)\rightarrow \BMO(\mathbb R)}\leq 1$ and
$\|P_{M}\|_{H^{1}(\mathbb R)\rightarrow H^{1}(\mathbb R)}\leq 1$. For similar reasons, we have 
$\|P_{M}^{\perp}\|_{\BMO(\mathbb R)\rightarrow \BMO(\mathbb R)}\leq 1$
and
$\|P_{M}^{\perp}\|_{H^{1}(\mathbb R)\rightarrow H^{1}(\mathbb R)}\leq 1$

We remark that in $E$, the equality
\begin{equation}\label{ortho}
P_{M}^{\perp}(f)=\sum_{I\in {\cal D}_{M}^{c}}\langle f,\psi_{I}\rangle \psi_{I} 
\end{equation}
is to be interpreted in its Schauder basis sense,
$$
\lim_{M'\rightarrow \infty }\| P_{M}^{\perp}(f)
-\sum_{I\in {\cal D}_{M'}\backslash {\cal D}_{M}}\langle f,\psi_{I}\rangle \psi_{I}  \|_{E}=0.
$$
Note that according to Theorem \ref{charofcompact}, an operator $T: E \to E$, is compact if and only if 
$$
\lim_{M\rightarrow \infty }\| P_{M}^{\perp}\circ T\|=0,
$$
where $\| \cdot \|$ is the operator norm.

\subsection{The space $\CMO(\mathbb R)$}\label{subsectionCMO}
We provide now the definition and main properties of the space to which the function $T(1)$ must belong if $T$ is compact.

\begin{definition}
We define $\CMO(\mathbb R)$ as the closure in $\BMO(\mathbb R)$ of the space of continuous functions vanishing at infinity. 
\end{definition}

We note that $\CMO(\mathbb R)$ equipped with the norm $\| \cdot \|_{\BMO}$ is a Banach space. 
The next lemma gives two characterizations of $\CMO(\mathbb R)$: the first in terms of the average deviation from the mean, and the second in terms of a 
wavelet decomposition. See \cite{Roch} for the first, and \cite{LTW} for the second characterization.

\begin{lemma} \label{lem:cmochar}
i) $f\in \CMO(\mathbb R)$ if and only if 
$f\in \BMO(\mathbb R)$ and
\begin{equation}\label{CMO}
\lim_{M\rightarrow \infty } \sup_{I\notin {\mathcal I}_{M}} 
\frac{1}{|I|}\int_{I}\Big|f(x)-\frac{1}{|I|}\int_{I}f(y)dy\Big|dx =0\\
\end{equation}

ii)
$f\in \CMO(\mathbb R)$ if and only if 
$f\in \BMO(\mathbb R)$ and
\begin{equation}\label{CMO2}
\lim_{M\rightarrow \infty} 
\sup_{Q\subset \mathbb R} \Big(\frac{1}{|Q|}
\sum_{\tiny \begin{array}{c}I\notin {\cal D}_{M}\\ I\subset Q \end{array}}|\langle f,\psi_{I}\rangle |^{2}\Big)^{1/2}=0,
\end{equation}
\indent where the supremum is calculated over all intervals $Q \subset \mathbb R$.
\end{lemma}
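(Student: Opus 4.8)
The plan is to prove Lemma \ref{lem:cmochar} by reducing both characterizations to the known behaviour of continuous functions vanishing at infinity and exploiting the density definition of $\CMO(\mathbb R)$ together with uniform control of the relevant quantities in $\|f\|_{\BMO}$. For part (i), first I would observe that the functional
$$
N_M(f) = \sup_{I\notin \mathcal I_M} \frac{1}{|I|}\int_I \Big|f(x) - \frac{1}{|I|}\int_I f(y)\,dy\Big|\,dx
$$
is dominated by $2\|f\|_{\BMO}$ uniformly in $M$, and is monotone non-increasing in $M$ since $\mathcal I_M \subset \mathcal I_{M+1}$. Thus $f\mapsto \lim_M N_M(f)$ is a bounded sublinear functional on $\BMO(\mathbb R)$. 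Next I would check that it vanishes on continuous functions $g$ vanishing at infinity: for such $g$ and an interval $I$ with either very large $|I|$, very small $|I|$, or center $c(I)$ far from the origin relative to $|I|$ (which is exactly what $I\notin\mathcal I_M$ forces for $M$ large), the mean oscillation $\frac{1}{|I|}\int_I|g-\langle g\rangle_I|$ is small — for large or far-away $I$ because $g$ is uniformly small there, and for small $I$ by uniform continuity of $g$ on a fixed compact set and smallness of $g$ outside. Since $\lim_M N_M$ is continuous on $\BMO$ and vanishes on a dense subset of $\CMO$, it vanishes on all of $\CMO(\mathbb R)$, giving one implication. For the converse, I would show that a BMO function satisfying \eqref{CMO} can be approximated in BMO norm by continuous functions vanishing at infinity; the standard device is to truncate and mollify, for instance replacing $f$ by $\varphi_R f * \rho_\varepsilon$ for suitable cutoffs $\varphi_R$ and mollifiers $\rho_\varepsilon$, and to estimate the BMO error of each modification using \eqref{CMO} to handle the large-scale and far-away pieces.

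For part (ii), I would work with a fixed smooth wavelet basis $(\psi_I)_{I\in\mathcal D}$ of $\BMO(\mathbb R)$ and use the wavelet characterization of the $\BMO$ norm,
$$
\|f\|_{\BMO}^2 \approx \sup_{Q\subset\mathbb R} \frac{1}{|Q|}\sum_{I\subset Q} |\langle f,\psi_I\rangle|^2.
$$
The quantity in \eqref{CMO2} is the tail of this square sum over $I\notin\mathcal D_M$, which by the same Carleson-type bound is $\lesssim \|f\|_{\BMO}$ uniformly in $M$ and monotone in $M$; so again $f\mapsto$ (the limit in \eqref{CMO2}) is a bounded sublinear functional on $\BMO$. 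One clean route is to use part (i): it suffices to show \eqref{CMO} and \eqref{CMO2} are equivalent for $f\in\BMO(\mathbb R)$. Alternatively, and perhaps more transparently, I would note that \eqref{CMO2} is precisely the statement $\lim_M \|P_M^\perp f\|_{\BMO} = 0$, and that this holds iff $P_M f \to f$ in $\BMO$; since each $P_M f$ is a finite wavelet sum — hence a smooth compactly supported function, in particular in $\CMO(\mathbb R)$ — the limit $f$ lies in the closed subspace $\CMO(\mathbb R)$. Conversely, for $f\in\CMO$ pick continuous $g$ vanishing at infinity with $\|f-g\|_{\BMO}<\epsilon$; then $\|P_M^\perp f\|_{\BMO} \le \|P_M^\perp\|\,\|f-g\|_{\BMO} + \|P_M^\perp g\|_{\BMO} \le \epsilon + \|P_M^\perp g\|_{\BMO}$, and the last term tends to $0$ as $M\to\infty$ because $g$, being continuous with compact-type decay, has rapidly decaying wavelet coefficients outside $\mathcal D_M$ (its coefficients at very small, very large, or very eccentric intervals are controlled by the modulus of continuity and the decay of $g$, as in Lemma \ref{decayofidentity}).

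I expect the main obstacle to be the direction "oscillation/coefficient condition $\Rightarrow$ membership in $\CMO$", i.e. producing the explicit approximation by continuous functions vanishing at infinity and controlling the BMO error. The subtle point is that a generic BMO function is far from continuous, so one must mollify, but mollification does not commute nicely with the BMO norm unless one is careful; the key estimate is that if $\rho_\varepsilon$ is a mollifier then $\|f - f*\rho_\varepsilon\|_{\BMO}$ is controlled by the oscillation of $f$ at scales $\lesssim\varepsilon$, which \eqref{CMO} only directly controls for small intervals and not uniformly — so one needs the a priori membership $f\in\BMO$ to handle bounded scales and the $\CMO$-type decay for the rest. For part (ii) this difficulty is entirely bypassed by the functional-analytic argument via $P_M^\perp$ and the density definition, which is why I would present (ii) first via the projections and then, if desired, transfer to (i), or cite \cite{Roch} and \cite{LTW} for the two characterizations as the excerpt already indicates, filling in the density-and-continuity argument as the self-contained core.
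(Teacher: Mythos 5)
The paper does not actually prove this lemma; it cites Rochberg \cite{Roch} for the first characterization and Lacey--Terwilleger--Wick \cite{LTW} for the second, so there is no in-paper argument to compare against. Your proposal is therefore an attempt to make the lemma self-contained, and on that footing it has real gaps. The most serious one is the converse direction of (i): ``\,$f\in \BMO$ and \eqref{CMO} holds $\Rightarrow$ $f\in\CMO$\,''. You name truncation and mollification as the device, but the key estimate --- that $\|f-f*\rho_\varepsilon\|_{\BMO}$ is controlled by the small-scale oscillation supplied by \eqref{CMO}, together with control of the truncation error at large and far-away scales --- is precisely the nontrivial content of Uchiyama's theorem, and the proposal only flags it as an obstacle without resolving it. Your alternative, to derive (i) from (ii) by showing \eqref{CMO} $\Leftrightarrow$ \eqref{CMO2} directly, is also not straightforward: the naive bound $|\langle f,\psi_I\rangle|\lesssim |I|^{1/2}\,\mathrm{MO}(f,I)$ is not summable over all scales inside a fixed $Q$, so passing from a supremum over oscillations to a Carleson sum requires a genuine decomposition (this is what makes the cited references nontrivial).

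For part (ii), the route via $P_M^\perp$ is the right idea and the implication ``\eqref{CMO2} $\Rightarrow$ $f\in\CMO$'' is clean (finite lagom sums of compactly supported smooth wavelets lie in $\CMO$, and $\CMO$ is closed). The converse, however, has a gap as written: you invoke Lemma \ref{decayofidentity} to argue $\|P_M^\perp g\|_{\BMO}\to 0$ for $g\in C_0(\R)$, but that lemma is about bump functions adapted to an interval, and a generic continuous function vanishing at infinity is not a bump function and need not have rapidly decaying wavelet coefficients. One more density step is needed: first replace $g$ by a smooth compactly supported $\tilde g$ with $\|g-\tilde g\|_\infty$ small (using $\|P_M^\perp\|_{\BMO\to\BMO}\le 1$ and $\|h\|_{\BMO}\lesssim\|h\|_\infty$), and then the Carleson estimate $\sup_Q \frac1{|Q|}\sum_{I\notin\mathcal D_M,\,I\subset Q}|\langle \tilde g,\psi_I\rangle|^2\to 0$ must be verified by splitting into small scales (using a Taylor/mean-zero estimate $|\langle\tilde g,\psi_I\rangle|\lesssim|I|^{3/2}$), large scales (using $|\langle\tilde g,\psi_I\rangle|\lesssim|I|^{-1/2}$), and far-away intervals (using support). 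With these two repairs the part (ii) argument closes; part (i) as you sketch it does not, and is better handled by citing \cite{Roch} as the paper does.
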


As a consequence of previous Lemma, $(\psi_I)_{I\in \mathcal{D}}$ is a Schauder basis for $\CMO(\R)$.
We will mainly be using the latter formulation.

\begin{remark}
Considering the comment after Definition \ref{Imdef}, we see that the preceding lemma is also true if we, in line with \cite{Roch}, replace $\mathcal{I}_M$ by $\mathcal{I}'_M$ consisting of those intervals $I$ such that $2^{-M} \leq |I| \leq 2^M$ and $|c(I)| \leq M/2$, and $\mathcal{D}_M$ by $\mathcal{D}'_M = \mathcal{I}'_M \cap \mathcal{D}$.
\end{remark}

We note that the remarks about equality (\ref{ortho}) work well for the spaces $L^{p}(\mathbb R)$, $H^{1}(\mathbb R)$, and also $\CMO(\mathbb R)$, but not for $\BMO(\mathbb R)$. The latter space is not separable and so, it does not contain an unconditional basis. However, the characterization of the norm in $\BMO(\mathbb R)$ by a wavelet basis 
implies that for every $f\in \BMO(\mathbb R)$ we have the equality 
$$
f=\sum_{I\in {\cal D}}\langle f,\psi_{I}\rangle \psi_{I} 
$$
with convergence in the weak* topology $\sigma (\BMO(\mathbb R), H^{1}(\mathbb R))$. This, in turn, implies the quality 
\begin{equation}\label{ortho2}
P_{M}^{\perp}(f)=\sum_{I\in {\cal D}_{M}^{c}}\langle f,\psi_{I}\rangle \psi_{I} 
\end{equation}
with convergence in the same topology, which is interpreted as 
$$
\lim_{M'\rightarrow \infty }\Big|\langle P_{M}^{\perp}(f),g\rangle 
-\sum_{I\in {\cal D}_{M'}\backslash {\cal D}_{M}}\langle f,\psi_{I}\rangle \langle \psi_{I},g\rangle\Big|=0
$$
for all $g\in H^{1}(\mathbb R)$. 
See \cite{LeMe} and \cite{AT} for proofs and more details.

\subsection{Main results}
We now give the statement of the main result in the paper and also the results
in \cite{V} which we will need. 

\begin{theorem}\label{Mainresult2}
Let
$T$ be a linear operator associated with a standard Calder\'on-Zygmund kernel. 

Then, $T$ extends to a compact operator from $L^{\infty}(\mathbb R)$ into 
$\CMO(\mathbb R)$
if and only if $T$ 
is associated with a compact Calder\'on-Zygmund kernel, $T$ satisfies
the weak compactness condition
and 
$T(1), T^{*}(1) \in \CMO(\mathbb R)$. 

Moreover, with the extra assumption $T(1)=T^{*}(1)=0$, $T$ is compact 
from $\BMO(\mathbb R)$ into $\CMO(\mathbb R)$.
\end{theorem}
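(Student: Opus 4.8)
The plan is to reduce everything to the criterion of Theorem \ref{charofcompact}: the operator is compact iff $\|P_M^\perp \circ T\| \to 0$ in operator norm, where now the target space $E$ is $\CMO(\mathbb R)$ and, for the necessity direction, the source space is $L^\infty(\mathbb R)$ (resp.\ $\BMO(\mathbb R)$ for the moreover statement). First I would dispose of necessity, which is the soft direction: if $T : L^\infty \to \CMO$ is compact then standard truncation/localization of the kernel against bump functions with separated supports forces the compact Calder\'on-Zygmund kernel estimates and the weak compactness condition (these are tested on $L^\infty$-bounded, hence admissible, inputs), while $T(1) \in \CMO$ is immediate since $1 \in L^\infty$ and $T(1)$ must lie in the range; the condition $T^*(1)\in\CMO$ comes from duality, pairing $T(\phi_I)$ against the constant on a large interval and using the already-established kernel smoothness to make sense of the formal computation, exactly as in \cite{V}.

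The substance is sufficiency. Assume the kernel is a compact CZ kernel, $T$ has weak compactness, and $T(1)=T^*(1)=0$; I first prove the $\BMO \to \CMO$ statement with these vanishing hypotheses, and then derive the general $L^\infty \to \CMO$ case by the standard paraproduct correction $T = (T - \pi_{T1} - \pi_{T^*1}^*) + \pi_{T1} + \pi_{T^*1}^*$, where the paraproducts $\pi_b$ with $b \in \CMO$ are themselves compact $L^\infty \to \CMO$ (this is essentially built from the $\CMO$ wavelet characterization, Lemma \ref{lem:cmochar}(ii), and should be recorded as a lemma). So the heart of the matter is: under $T(1)=T^*(1)=0$, show $\|P_M^\perp T f\|_{\BMO} \to 0$ uniformly over $\|f\|_{\BMO}\le 1$ — equivalently, by the wavelet square-function characterization of $\CMO$ in \eqref{CMO2}, that
\begin{equation*}
\lim_{M\to\infty}\ \sup_{\|f\|_{\BMO}\le 1}\ \sup_{Q\subset\mathbb R}\ \Big(\frac{1}{|Q|}\sum_{\substack{I\notin\mathcal D_M\\ I\subset Q}}|\langle Tf,\psi_I\rangle|^2\Big)^{1/2}=0 .
\end{equation*}
The idea is to expand $f=\sum_{J}\langle f,\psi_J\rangle\psi_J$, so the matrix coefficients $\langle T\psi_J,\psi_I\rangle$ enter, and to split the sum over $(I,J)$ according to the relative size/position configuration: $|J|\lesssim|I|$ with $J$ near $I$, $|J|\gg|I|$, and the "far" cases $\rdist(I,J)$ large. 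For the cancellative pairs one uses Lemma \ref{decayofidentity}-type almost-orthogonality together with the compact CZ kernel decay $F_K$ and weak compactness $F_W$ to get matrix bounds carrying a factor $F(I;M)+F(J;M)$ that is small when $I\notin\mathcal D_M$; the case $|J|\gg|I|$ is where $T(1)=T^*(1)=0$ is used, replacing $\psi_J$ on the support of $\psi_I$ by a constant which $T$ kills, so only the error term (controlled again by the kernel's compact smoothness) survives. Summing these estimates into the Carleson-type quantity above, and here is where I expect the main obstacle, requires controlling $\sum_{I\subset Q,\ I\notin\mathcal D_M}$ of squared off-diagonal sums uniformly in $Q$: this is precisely the boundedness of the \emph{modified martingale transform} advertised as Proposition \ref{modifiedmartingaletransform} in the introduction, which plays the role the usual square function plays in the $L^p$ theory of \cite{V}. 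So the real work is proving that modified-martingale-transform estimate; granting it, the three configuration sums each contribute $o(1)$ as $M\to\infty$ (the "lagom" truncation makes every surviving $I$ carry a small admissible factor, uniformly), which closes the argument.

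One more point worth flagging: because $\BMO$ is non-separable and $P_M^\perp$ on $\BMO$ is only weak$^*$-convergent in the sense of \eqref{ortho2}, the compactness assertion "$T:\BMO\to\CMO$ compact" must be read as norm convergence of $P_M^\perp\circ T$ as operators \emph{into $\CMO$} — which is legitimate since $\CMO$ is separable and has the Schauder basis $(\psi_I)$, so Theorem \ref{charofcompact} applies on the target side; the pairing $\langle Tf,\psi_I\rangle$ is well-defined for $f\in\BMO$ precisely because $\psi_I\in H^1$ and, via the kernel representation plus $T^*(1)=0$, $T$ maps $\BMO$ boundedly into $\BMO$. I would make this precise at the outset so that all the displayed Carleson sums above are manifestly meaningful.
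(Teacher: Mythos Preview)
Your sufficiency plan matches the paper's architecture: reduce to $T(1)=T^*(1)=0$ via compact paraproducts (this is Proposition \ref{paraproducts1}), expand in a wavelet basis, feed the matrix coefficients $\langle T\psi_J,\psi_I\rangle$ from the bump lemma (Proposition \ref{symmetricspecialcancellation}) into the modified martingale transform (Proposition \ref{modifiedmartingaletransform}), and show $\langle P_M^\perp Tf,g\rangle\to 0$ uniformly over the unit balls of $\BMO$ and $H^1$. However, your sentence ``the lagom truncation makes every surviving $I$ carry a small admissible factor'' glosses over the actual combinatorics. The bump lemma does not give a factor $F(I;M)+F(J;M)$; it gives $F(I_1,\ldots,I_6;M_{T,\epsilon})+\epsilon$ involving \emph{six} auxiliary intervals (input, output, their hull $\langle I,J\rangle$, and three dilates $\lambda_j K_{\max/\min}$), and this is small only when \emph{all six} lie outside $\mathcal D_M$. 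When some $I_i\in\mathcal D_M$ while the output interval lies in $\mathcal D_{2M}^c$ (note the paper doubles the parameter here), smallness comes not from $F$ but from the geometric decay $2^{-|e|(\frac12+\delta)}n^{-(1+\delta)}$: one checks case by case that the incompatibility between ``output $\notin\mathcal D_{2M}$'' and ``$I_i\in\mathcal D_M$'' forces $|e|\ge M$, or $n\gtrsim M$, or $n\gtrsim 2^{M-|e|}$, and those tails, summed against the martingale-transform bound $2^{|e|/2}(\log n+|e|+1)^{1/2}$, are $o(1)$. Your three-configuration split does not resolve this; the paper's six-case analysis is where the work is.

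For necessity the paper takes a different and shorter route than your direct testing: compactness $L^\infty\to\CMO$ together with the standard CZ kernel gives $L^p$ boundedness by Journ\'e, hence $L^p$ compactness by interpolation, and then all three conditions (compact kernel, weak compactness, $T(1),T^*(1)\in\CMO$) are read off from the $L^p$ theorem in \cite{V}. In particular your proposed direct argument for $T^*(1)\in\CMO$ from $L^\infty\to\CMO$ compactness alone is not obvious, whereas the interpolation detour handles it for free.
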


The analog result appearing in \cite{V} is the following Theorem.

\begin{theorem}\label{Mainresult}
Let
$T$ be a linear operator associated with a standard Calder\'on-Zygmund kernel. 

Then, $T$ extends to a compact operator on $L^p(\mathbb R)$ for $1<p<\infty$ if and only if $T$ 
is associated with a compact Calder\'on-Zygmund kernel, $T$ satisfies
the weak compactness condition
and $T(1), T^{*}(1) \in \CMO(\mathbb R)$. 
\end{theorem}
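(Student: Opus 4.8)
\emph{The plan.} I would follow the David--Journé $T(1)$ scheme throughout, with compactness systematically replacing boundedness. \emph{Necessity.} Assuming $T$ extends to a compact operator on $L^p(\R)$, I would derive each of the three conditions by testing $T$ on $L^p$-normalized bump functions and using that a compact operator maps weakly null sequences to norm null sequences. If the compact Calder\'on--Zygmund kernel inequality failed at some scale or location, translating and dilating a fixed bump would produce a sequence tending weakly to $0$ in $L^p$ (supports shrinking, spreading, or escaping to infinity) whose images have norms bounded below, contradicting compactness; the same mechanism applied to the diagonal pairing $\langle T\phi_I,\varphi_I\rangle$ yields the weak compactness inequality, with the free parameter $M$ taken large to absorb $\epsilon$. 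For $T(1)\in\CMO(\R)$ I would test $\langle T(1),g\rangle$ against mean-zero $g$ localized on an interval $I$: writing $1=\chi_{2I}+(1-\chi_{2I})$, the term $\langle T\chi_{2I},g\rangle$ is small when $I$ is non-lagom by compactness, and the complementary term equals $\int\!\!\int_{(2I)^{c}}(K(t,x)-K(t,c(I)))g(x)\,dt\,dx$, which is small by the compact kernel bound; characterization (\ref{CMO}) then gives $T(1)\in\CMO(\R)$, and $T^{*}(1)\in\CMO(\R)$ follows by symmetry.

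\emph{Sufficiency.} Since a compact Calder\'on--Zygmund kernel is standard, the weak compactness condition implies the classical weak boundedness property, and $\CMO(\R)\subset\BMO(\R)$, the David--Journé theorem first gives that $T$ is bounded on $L^p(\R)$ for $1<p<\infty$. I would fix a smooth, compactly supported wavelet basis $(\psi_I)_{I\in\mathcal D}$ that is also a Schauder basis of $L^p(\R)$, so that by Theorem \ref{charofcompact} it suffices to prove $\|P_M^\perp\circ T\|_{L^p\to L^p}\to 0$ as $M\to\infty$. Next I would split $T=\widetilde T+\Pi_{T(1)}+\Pi_{T^{*}(1)}^{*}$, where $\Pi_b$ denotes the wavelet paraproduct with symbol $b$ and $\widetilde T(1)=\widetilde T^{*}(1)=0$ (paraproducts with $\CMO$ symbol themselves have a compact kernel and satisfy weak compactness, so $\widetilde T$ retains both). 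For the paraproduct terms, $P_M^\perp\Pi_b=\Pi_{P_M^\perp b}$ gives $\|P_M^\perp\Pi_b\|_{L^p\to L^p}\lesssim\|P_M^\perp b\|_{\BMO(\R)}$, and for $b=T(1)\in\CMO(\R)$ the wavelet characterization (\ref{CMO2}) says precisely that $\|P_M^\perp b\|_{\BMO(\R)}\to 0$; the adjoint paraproduct is symmetric.

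For the reduced operator I would expand in the basis, $P_M^\perp\widetilde T f=\sum_{J\notin\mathcal D_M}\sum_{I\in\mathcal D}\langle f,\psi_I\rangle\langle\widetilde T\psi_I,\psi_J\rangle\psi_J$, so that everything reduces to a matrix estimate of the type
\[
|\langle\widetilde T\psi_I,\psi_J\rangle|\ \lesssim\ \ec(I,J)^{\frac12+\eta}\,\rdist(I,J)^{-1-\eta}\,\big(F_K(\langle I,J\rangle)+F_W(\langle I,J\rangle;M_0)+\epsilon\big),
\]
with $\eta>0$ and $M_0=M_0(\epsilon)$ supplied by the weak compactness condition, $\langle I,J\rangle$ being the smallest interval containing $I\cup J$. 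I would prove this by cases on the relative position of $I$ and $J$: disjoint supports via the compact kernel smoothness and the mean-zero property of $\psi_I,\psi_J$; nested intervals of very different size via the kernel bound away from the smaller one plus a weak-compactness-type estimate on the overlap; comparable intervals directly by weak compactness. The key observation is that when $J\notin\mathcal D_M$ and $M$ is large relative to $M_0$, the enveloping interval $\langle I,J\rangle$ is forced to be very short, very long, or centred far from the origin, unless $\ec(I,J)$ or $\rdist(I,J)^{-1}$ is already tiny, so in every configuration one factor on the right is small. A Schur test on this almost-diagonal matrix (or a Cotlar--Stein estimate at $L^2$ followed by interpolation), together with Littlewood--Paley theory, then yields $\|P_M^\perp\widetilde T\|_{L^p\to L^p}\lesssim\epsilon+o_M(1)$; letting $M\to\infty$ and then $\epsilon\to 0$ completes the proof.

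\emph{Expected main obstacle.} The delicate part will be the matrix estimate and the ensuing Schur summation, where one must keep simultaneous control of three independent admissible functions $L,S,D$ and of the parameter $M_0=M_0(\epsilon)$, and verify that the non-lagom condition on $J$ really does activate a decaying factor in \emph{every} geometric regime — especially in the borderline case where $I$ is lagom but $J$ is non-lagom and close to $I$, which forces one to exploit the eccentricity or relative distance, or to split the sum over $J$ further according to the size of $\rdist(J,\mathbb B_{2^{M_0}})$. The essential use of the $\CMO$ (rather than merely $\BMO$) hypothesis, namely in making the paraproduct tails vanish uniformly, is the other place where the hypotheses are genuinely needed.
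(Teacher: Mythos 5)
The paper does not actually prove Theorem \ref{Mainresult} here; it is imported from \cite{V}, while Section~3 proves the endpoint analogue (Theorem~\ref{Mainresult2}) with what is evidently the same machinery. Your plan matches that machinery in all of its main moves: necessity by testing on dilated/translated bumps, sufficiency by a paraproduct reduction $T=\widetilde T + \Pi_{T(1)} + \Pi^*_{T^*(1)}$, the lagom projection together with the Schauder-basis characterization of compactness (Theorem~\ref{charofcompact}), a bump/matrix estimate in the spirit of Proposition~\ref{symmetricspecialcancellation}, and a case analysis showing that $J\notin\mathcal D_M$ forces one factor on the right-hand side to be small. On those points the proposal and the paper agree.

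There is, however, a genuine gap at the summation step. You propose to close the argument with ``a Schur test on this almost-diagonal matrix'', but for the matrix
$a_{I,J}=\ec(I,J)^{\frac12+\eta}\,\rdist(I,J)^{-1-\eta}$
the Schur row sums diverge: at fixed eccentricity $\ec(I,J)=2^{-|e|}$ there are $\sim 2^{|e|}$ intervals $J$ paired with each $I$, so
$\sum_{J}a_{I,J}\gtrsim \sum_{e} 2^{|e|}\,2^{-|e|(\frac12+\eta)}\sum_n n^{-1-\eta}$,
which is infinite whenever $\eta<\tfrac12$ — and $\eta$ is the small parameter $\delta'<\delta$ coming out of the bump lemma, so you cannot assume $\eta>\tfrac12$. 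What actually makes the sum converge is not pointwise Schur but an \emph{almost-orthogonality estimate organized block-by-block} in eccentricity $e$ and relative distance $n$: the operator attached to a single $(e,n)$-block is bounded on $L^2$ with norm $\lesssim 2^{|e|/2}$ (and on $\BMO$ with norm $\lesssim 2^{|e|/2}(\log(n+1)+|e|+1)^{1/2}$, which is exactly Proposition~\ref{modifiedmartingaletransform}); the factor $2^{-|e|(1/2+\eta)}n^{-(1+\eta)}$ from the bump lemma then makes the sum over $e,n$ converge. This block decomposition is precisely what the paper's modified martingale transform (and, in \cite{V}, its square-function analogue) is for; your parenthetical appeal to ``Cotlar--Stein $+$ Littlewood--Paley'' gestures at the right idea but in the form you wrote it the step would fail. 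A secondary imprecision: your matrix estimate places the $F$-factor at $\langle I,J\rangle$ alone, whereas Proposition~\ref{symmetricspecialcancellation} needs $F$ evaluated at six intervals ($I$, $J$, $\langle I,J\rangle$, and three rescaled copies of $K_{\min}$, $K_{\max}$); the case analysis in the proof of Theorem~\ref{BMObounds} exploits those extra intervals, and it is not clear that your single-interval version follows directly from the kernel and weak compactness hypotheses.
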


We now state the key ingredient in the proof of Theorem \ref{Mainresult} and 
also Theorem \ref{Mainresult2}: the so-called bump Lemma, which describes the action of the operator over functions adapted to two different intervals. 

Given two intervals $I$ and $J$, we will denote $K_{min}=J$ and $K_{max}=I$ if $|J|\leq |I|$, while $K_{min}=I$ and $K_{max}=J$ otherwise. 

\begin{proposition}\label{symmetricspecialcancellation}

Let $K$ be a compact Calder\'on-Zygmund kernel with parameter $\delta$. 
Let $N$ sufficiently large depending on $\delta $ and $0<\theta<1$, $0<\delta' <\delta $ depending on $N$.

Let $T:{\mathcal S}_{N}\rightarrow {\mathcal S}_{N}'$ be a linear operator associated with $K$ satisfying the weak compactness condition with parameter $N$ 
and the special cancellation condition $T(1)=0$ and $T^{*}(1)=0$.

Then, 
there exists $C_{\delta'}>0$  such that
for every $\epsilon >0$, all intervals $I, J$ 
and 
all mean zero bump functions $\psi_{I}$, $\psi_{J}$, $L^{2}$-adapted to $I$ and $J$ respectively
with order $N$ and constant $C>0$, 
we have
\begin{equation*}\label{twobump2}
|\langle T(\psi_{I}),\psi_{J}\rangle |\leq  C_{\delta'} C \, \ec(I,J)^{\frac{1}{2}+\delta'}
\rdist(I,J)^{-(1+\delta')}
\Big(F(I_{1},\ldots ,I_{6};M_{T,\epsilon })+\epsilon \Big)
\end{equation*}
where  
$I_{1}=I$, $I_{2}=J$, $I_{3}=\langle I, J\rangle $, $I_{4}=\lambda_{1}\tilde{K}_{max}$, 
$I_{5}=\lambda_{2}\tilde{K}_{max}$,
$I_{6}=\lambda_{2}K_{min}$
with $\lambda_{1}=|K_{max}|^{-1}{\rm diam}(I\cup J)$, 
$\lambda_{2}=(|K_{min}|^{-1}{\rm diam}(I\cup J))^{\theta }$ and 
$\tilde{K}_{max}$ is the translate of $K_{max}$ with the same centre as $K_{min}$.
\end{proposition}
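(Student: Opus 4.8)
The plan is to prove the bump estimate by a careful splitting argument depending on the relative geometry of $I$ and $J$, exploiting the cancellation $T(1)=T^*(1)=0$ to get decay in the eccentricity, and exploiting the compact kernel bound together with the weak compactness condition to produce the factor $F(I_1,\ldots,I_6;M_{T,\epsilon})+\epsilon$. Assume without loss of generality $|J|\le|I|$, so $K_{\min}=J$ and $K_{\max}=I$. I would distinguish two main regimes: the \emph{separated case}, where $\rdist(I,J)$ is large (say $\diam(I\cup J)\ge 2|I|$, so the supports are far apart relative to $|I|$), and the \emph{nested/overlapping case}, where $\rdist(I,J)\approx 1$.

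In the separated case one writes $\langle T(\psi_I),\psi_J\rangle=\int\!\!\int \psi_I(t)\psi_J(x)K(t,x)\,dt\,dx$ using Definition \ref{intrep}, which is legitimate here because the supports are essentially disjoint (or can be reduced to that by a standard tail estimate using the Schwartz decay of the bump functions). Then one subtracts $K(c(I),c(J))$ (or iterates the subtraction twice, once in each variable, to extract the full power $\ec(I,J)^{1/2+\delta'}$) and uses the smoothness estimate of the compact Calder\'on--Zygmund kernel: $|K(t,x)-K(t',x')|\le C(|t-t'|+|x-x'|)^\delta|t-x|^{-1-\delta}L(|t-x|)S(|t-x|)D(|t+x|)$. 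Integrating, the $(|t-t'|+|x-x'|)^\delta$ factor against the mean-zero bumps produces $|J|^\delta$ and, after also using one vanishing moment of $\psi_I$, an extra $|I|$-gain is available; the denominator $|t-x|^{-1-\delta}\approx\diam(I\cup J)^{-1-\delta}$ gives the $\rdist(I,J)^{-(1+\delta')}$ decay (with $\delta'<\delta$ to absorb logarithmic losses from summing dyadic scales). The admissible-function factors $L,S,D$ evaluated at $|t-x|\approx\diam(I\cup J)$ and $|t+x|$ are then compared to the values appearing in $F_K$ at the intervals $I_3=\langle I,J\rangle$, $I_4,I_5,I_6$: this is exactly the role of the dilated intervals $\lambda_1\tilde K_{\max}$, $\lambda_2\tilde K_{\max}$, $\lambda_2 K_{\min}$, whose sizes and relative distances to $\mathbb B$ encode $\diam(I\cup J)$ and the location of the supports, with the exponent $\theta$ chosen so the $\lambda_2$-dilates interpolate correctly between the $S$-type (small-scale) and $D$-type (far-from-origin) behavior. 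The monotonicity of $L,D$ (non-increasing) and $S$ (non-decreasing) noted after Definition \ref{prodCZ} is what lets us dominate the integrand's $L,S,D$ by the discrete sums in $F_K(I_1,\ldots,I_6)$.

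In the nested/overlapping case $\rdist(I,J)\approx1$, the kernel is singular on the overlap and the integral representation is not directly available, so one decomposes $\psi_J=\sum$ of pieces and, more importantly, splits $\psi_I$ relative to a neighborhood of $\supp\psi_J$: write $\psi_I=\psi_I\chi_{\text{near }J}+\psi_I\chi_{\text{far}}$, or better, use the paraproduct-free structure coming from $T(1)=0$. Concretely, for the far part one is again in the kernel regime and argues as above; for the near part, one rescales so that both bumps are adapted to comparable intervals of size $\approx|J|$ and invokes the weak compactness condition (Definition \ref{WB}) — after checking that $\psi_I$ restricted near $J$, suitably normalized, is an $L^2$-bump adapted to an interval of size $\approx|J|$ with the right order $N$ and a controlled constant. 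This yields the factor $F_W$ at an interval of size $\approx|J|$; translating its arguments $2^{-M}|J|$, $2^M|J|$, $M^{-1}\rdist(J,\mathbb B_{2^M})$ into the list $I_1,\ldots,I_6$ (with $J=I_2$, and the dilates absorbing the normalization) gives the stated $F_W(I_1,\ldots,I_6;M_{T,\epsilon})$ contribution, plus the additive $\epsilon$ from Definition \ref{WB}. The cancellation conditions $T(1)=0=T^*(1)$ are used to subtract off the "constant" parts on both sides, which is what upgrades $\rdist(I,J)^{-1}$ to $\rdist(I,J)^{-(1+\delta')}$ and produces the extra half-power $\ec(I,J)^{\delta'}$ beyond the trivial $\ec(I,J)^{1/2}$.

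The main obstacle I expect is bookkeeping the admissible functions: producing, from the integrand bounds $L(|t-x|)S(|t-x|)D(|t+x|)$ and from the weak-compactness output $L_W(2^{-M}|I'|)S_W(2^M|I'|)D_W(M^{-1}\rdist(I',\mathbb B_{2^M}))$ at the auxiliary interval $I'$, exactly the six intervals $I_1,\dots,I_6$ with the precise dilation parameters $\lambda_1,\lambda_2$ and the exponent $\theta$, uniformly in all the degenerate configurations (e.g.\ $|J|\ll|I|$ but $J$ deep inside $I$, versus $|J|\approx|I|$ but far away, versus both tiny and near the origin). Handling the summation over dyadic scales when passing from a single-scale bump estimate to the stated inequality — and in particular losing only $\delta-\delta'$ in the exponent while keeping the $F$-factors under control via monotonicity — is the technically delicate point; everything else is a routine, if lengthy, Calder\'on--Zygmund-type computation.
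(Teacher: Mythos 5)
The paper does not prove this proposition; it is stated and cited from the earlier work \cite{V}, so there is no in-paper proof to compare against. Taking the standard approach (which is also, to the best of my understanding, the approach of \cite{V}) as the reference, your outline has the right overall shape: the dichotomy into a separated regime where the integral representation and kernel smoothness give the decay, and a nested or overlapping regime handled by a Whitney-type split of $\psi_I$ around $J$, with the weak compactness condition applied at the scale of $J$ and the cancellations $T(1)=T^*(1)=0$ used to remove the slowly varying pieces, is the correct skeleton.

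Two remarks. First, you attribute the upgrade from $\rdist(I,J)^{-1}$ to $\rdist(I,J)^{-(1+\delta')}$ to the special cancellation $T(1)=T^*(1)=0$. In fact, in the separated regime the full factor $\ec(I,J)^{1/2+\delta'}\rdist(I,J)^{-(1+\delta')}$ already follows from the $|t-x|^{-(1+\delta)}$ decay of the compact Calder\'on--Zygmund kernel together with one vanishing moment of $\psi_J$ and the $L^2$-normalization; no use of $T(1)=0$ is needed there. The special cancellation is invoked only in the nested regime, where $\rdist(I,J)\approx 1$ and the issue is gaining the eccentricity power beyond the trivial $\ec^{1/2}$, not the $\rdist$-decay. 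Second, and more substantially, you flag but do not resolve the bookkeeping that produces the six intervals $I_1,\ldots,I_6$ with the prescribed dilations $\lambda_1$, $\lambda_2$ and the exponent $\theta<1$. These are not soft interpolation parameters: $\lambda_1\tilde K_{\max}$ records where the kernel's admissible factors are evaluated in the separated case, while $\lambda_2 K_{\min}$ and $\lambda_2\tilde K_{\max}$ arise from summing a geometric series of Whitney shells in the nested case, with $\theta<1$ exactly what makes the resulting series converge while still letting the dilated intervals escape $\mathcal D_M$. Without carrying out that accounting, the reduction to ``a routine Calder\'on--Zygmund computation'' is asserted rather than achieved; the proof in \cite{V} is lengthy precisely because of that step.
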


\section{Endpoint estimates}\label{L2}
In this section, we extend the study of compactness for singular integral operators 
to the endpoint case. Namely, we characterize 
those Calder\'on-Zygmund operators that extend compactly as maps from $L^\infty(\mathbb R)$ to $\CMO(\mathbb R)$. 

\subsection{Necessity of the hypotheses}
The necessity of the hypotheses of Theorem \ref{Mainresult2} is a direct consequence of the following result. 
\begin{proposition}
Let $T$ be a linear operator associated with a standard Calder\'on-Zygmund kernel. If $T$ is compact from 
$L^{\infty }(\mathbb R)$ into $\CMO(\mathbb R)$ then, $T$ is compact on $L^{p}(\mathbb R)$ for 
$1<p<\infty $. 
\end{proposition}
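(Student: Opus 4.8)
The plan is to reduce, via Theorem~\ref{Mainresult}, to verifying the three structural conditions it involves: that $T$ is associated with a compact Calder\'on--Zygmund kernel, that $T$ satisfies the weak compactness condition, and that $T(1),T^{*}(1)\in\CMO(\R)$. Once these are in place, Theorem~\ref{Mainresult} immediately gives that $T$ extends to a compact operator on $L^{p}(\R)$ for every $1<p<\infty$. The starting point is a self-improvement of the hypothesis: since $(\psi_{I})_{I\in\mathcal D}$ is a Schauder basis of $\CMO(\R)$ (Lemma~\ref{lem:cmochar}) and the lagom projections satisfy $\|P_{M}^{\perp}\|_{\CMO\to\CMO}\le1$ with $P_{M}^{\perp}\to0$ strongly, compactness of $T\colon L^{\infty}(\R)\to\CMO(\R)$ forces
$$
\lim_{M\to\infty}\bigl\|P_{M}^{\perp}\circ T\bigr\|_{L^{\infty}(\R)\to\CMO(\R)}=0,
$$
because $T$ sends the unit ball of $L^{\infty}(\R)$ into a relatively compact subset of $\CMO(\R)$, on which a uniformly bounded, strongly null sequence of operators is uniformly null (cf.\ Theorem~\ref{charofcompact}). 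Dualizing, using $(\CMO(\R))^{*}=H^{1}(\R)$, the fact that $T^{*}$ (the transpose of $T$, associated with the transposed kernel) maps $H^{1}(\R)$ into $L^{1}(\R)$, and the equality $\|A\|=\|A^{*}\|$ for a bounded operator $A$, one also gets $\bigl\|T^{*}\circ P_{M}^{\perp}\bigr\|_{H^{1}(\R)\to L^{1}(\R)}\to0$.

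The condition $T(1)\in\CMO(\R)$ is free, since $\mathbf 1\in L^{\infty}(\R)$ and $T$ maps into $\CMO(\R)$. To obtain the compact kernel and weak compactness conditions I would replay, with $\CMO(\R)$ in the role of $L^{p}(\R)$, the necessity arguments of \cite{V} for Theorem~\ref{Mainresult}: one tests $T$ and $T^{*}$ against $L^{2}$-normalized bump functions $\phi_{I}$, $\varphi_{J}$ adapted to intervals $I$, $J$, and pairs the $\CMO$-valued element $T\phi_{I}$ against a mean-zero bump $\varphi_{J}\in H^{1}(\R)=(\CMO(\R))^{*}$. Writing $\langle T\phi_{I},\varphi_{J}\rangle=\langle P_{M}T\phi_{I},\varphi_{J}\rangle+\langle T\phi_{I},P_{M}^{\perp}\varphi_{J}\rangle$ and using self-adjointness of the lagom projections, the second term is dominated by $\|P_{M}^{\perp}\circ T\|_{L^{\infty}\to\CMO}$ while the first is supported on the finitely many dyadic lagom intervals; the decay of $\|P_{M}^{\perp}\circ T\|_{L^{\infty}\to\CMO}$, combined with the bump estimates of Lemma~\ref{decayofidentity} and bookkeeping in terms of $\ec(I,J)$, $\rdist(I,J)$, $|I|$, $|J|$ and $\rdist(I,\mathbb B)$, yields admissible functions $L,S,D$ certifying Definition~\ref{prodCZ} and Definition~\ref{WB}.

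For $T^{*}(1)\in\CMO(\R)$ I would use the wavelet characterization (\ref{CMO2}): for a mean-zero bump $\psi_{I}$ with $I\notin\mathcal D_{M}$ one has $\psi_{I}=P_{M}^{\perp}\psi_{I}$, and the standard identity $\langle T^{*}(1),\psi_{I}\rangle=\int_{\R}T\psi_{I}$ lets one split $\int_{\R}T\psi_{I}$ into a tail part away from $I$ --- which is small when $I$ is very short, very long, or far from the origin, by the compact kernel estimate obtained in the previous step --- and a local part, controlled through $T^{*}\colon H^{1}(\R)\to L^{1}(\R)$ and the convergence $\|T^{*}\circ P_{M}^{\perp}\|_{H^{1}\to L^{1}}\to0$. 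This makes the supremum in (\ref{CMO2}) tend to $0$ as $M\to\infty$, so $T^{*}(1)\in\CMO(\R)$, and Theorem~\ref{Mainresult} then completes the proof.

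I expect this last point to be the main obstacle. The hypothesis is asymmetric --- $L^{\infty}(\R)$ on one side and $\CMO(\R)$ on the other --- so the two ``$\mathbf 1$-conditions'' are not on the same footing: $T(1)\in\CMO(\R)$ is immediate, whereas $T^{*}(1)\in\CMO(\R)$ must be extracted from the dual statement about $T^{*}\colon H^{1}(\R)\to L^{1}(\R)$, and making the kernel definition of $T^{*}(1)$ interact \emph{quantitatively} with the $\CMO$-wavelet square sums --- so that the relevant suprema genuinely vanish in the limit --- is the delicate step. A related technical nuisance is that a bump function belongs to $H^{1}(\R)=(\CMO(\R))^{*}$ only when it has mean zero, which forces the pairing arguments, as in \cite{V}, to be organized around the larger of the two intervals.
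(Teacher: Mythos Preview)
Your route is much more elaborate than the paper's and, as written, leaves the hardest steps as sketches. The paper's proof is two lines: since $T$ is bounded from $L^{\infty}(\R)$ into $\CMO(\R)\subset\BMO(\R)$ and is associated with a standard Calder\'on--Zygmund kernel, the classical theory (Journ\'e \cite{Journe}) gives that $T$ is bounded on $L^{p}(\R)$ for every $1<p<\infty$; compactness at the endpoint together with boundedness on $L^{p}$ then yields compactness on $L^{p}$ by interpolation of compactness. No structural conditions need to be verified, and the three hypotheses of Theorem~\ref{Mainresult} are obtained only \emph{afterwards}, as a corollary, by invoking the necessity direction of \cite{V} once $L^{2}$-compactness is already in hand.

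By contrast, you are attempting to derive the compact kernel, weak compactness, and $T^{*}(1)\in\CMO$ conditions directly from the $L^{\infty}\to\CMO$ hypothesis and then feed them into Theorem~\ref{Mainresult}. This reverses the logical flow of the paper and effectively asks you to reprove the necessity half of Theorem~\ref{Mainresult2} from scratch---precisely the labour this proposition is designed to bypass. The pieces you label as ``replaying the necessity arguments of \cite{V}'' are not automatic in the asymmetric $L^{\infty}$/$\CMO$ setting, and your own discussion of $T^{*}(1)\in\CMO$ concedes that this step is delicate and not fully worked out. So while your plan is not obviously wrong, it is both far longer and incomplete where the paper's argument is essentially immediate; the missing idea is simply to interpolate.
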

\begin{proof}
Since $T$ is bounded from $L^{\infty }(\mathbb R)$ into $\CMO(\mathbb R)$ and it is associated with a 
standard Calder\'on-Zygmund kernel, by \cite{Journe} page 49, $T$ is bounded on $L^{p}(\mathbb R)$ for all 
$1<p<\infty $. Therefore, by interpolation, $T$ is compact on $L^{p}(\mathbb R)$.
\end{proof}

Whence, since in particular $T$ is compact on $L^{2}(\mathbb R)$, by the results in \cite{V} we have that the hypotheses of Theorem \ref{Mainresult2} are satisfied, that is, $T$ is associated with a compact Calder\'on-Zygmund kernel, satisfies the weak compactness condition and $T(1), T^{*}(1)\in \CMO(\mathbb R)$.

\subsection{Wavelet basis}
We devote the first part of this subsection to describe the way 
to choose a wavelet basis of $L^{p}(\mathbb R)$ and $H^{1}(\mathbb R)$ and how we use this basis to decompose the operators under study. In order to do this, we will use the results contained in the books 
\cite{Chui} and \cite{HerWeiss}.

For every function $\psi $ 
and every dyadic interval $I=2^{-j}[k,k+1]$, $j,k\in \mathbb Z$, we denote
$$
\psi_{I}(x)
={\mathcal T}_{l(I)}\mathcal D_{|I|}^{2}\psi (x)
=2^{j/2}\psi(2^{j}x-k),
$$
where $l(I)=\min\{ x: x\in I\}$. 

\begin{theorem}
Let $\psi \in L^{2}(\mathbb R)$ with $\| \psi\|_{L^{2}(\mathbb R)}=1$. Then, 
$\{ \psi_{I}\}_{I\in {\mathcal D}}$ is an orthonormal wavelet basis of $L^{2}(\mathbb R)$
if and only if 
$$
\sum_{k\in \mathbb Z}|\hat{\psi}(\xi +k)|^{2}=1
\hskip20pt, \hskip25pt
\sum_{k\in \mathbb Z}\hat{\psi}(2^{j}(\xi +k))\overline{\hat{\psi}(\xi +k)}=0
$$
for all $\xi \in \mathbb R$ and all $j\geq 1$.
\end{theorem}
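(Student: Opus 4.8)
The statement to prove is the characterization of orthonormal wavelet bases $\{\psi_I\}_{I \in \mathcal D}$ of $L^2(\mathbb R)$ in terms of the two Fourier-side identities $\sum_{k} |\hat\psi(\xi+k)|^2 = 1$ and $\sum_k \hat\psi(2^j(\xi+k))\overline{\hat\psi(\xi+k)} = 0$ for all $j \geq 1$. The plan is to reduce the statement about the full two-parameter system $\psi_I = 2^{j/2}\psi(2^jx - k)$ to statements purely about translates, using the Plancherel theorem to move everything to the Fourier side where dilations and translations become transparent.

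First I would recall that since $\{\psi_I\}$ is automatically a complete system when it is orthonormal (one direction follows from general principles on dilation structures, or can be taken from the cited references \cite{Chui}, \cite{HerWeiss}), the crux is the equivalence between orthonormality of $\{\psi_I\}$ and the two displayed equations. I would organize this as follows. \emph{Step 1: translation orthonormality at a fixed scale.} For functions $f, g \in L^2(\mathbb R)$, the family $\{g(\cdot - k)\}_{k \in \mathbb Z}$ together with the biorthogonality $\langle f(\cdot - k), g(\cdot - m)\rangle = \delta_{km}$ is equivalent, via Plancherel and periodization of $\hat f\,\overline{\hat g}$, to $\sum_{k \in \mathbb Z} \hat f(\xi + k)\overline{\hat g(\xi + k)} = 1$ a.e. Applying this with $f = g = \psi$ gives the first identity; it encodes exactly that $\{\psi(\cdot - k)\}_k = \{\psi_I : |I| = 1\}$ is orthonormal. \emph{Step 2: cross-scale orthogonality.} Orthonormality of the whole system additionally requires $\langle \psi_I, \psi_{I'}\rangle = 0$ whenever $|I| \neq |I'|$. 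By applying the dilation $\mathcal D_{2^{-j}}^2$ (which is unitary on $L^2$) one reduces the general cross-scale inner product to the case where one interval has length $1$ and the other has length $2^{-j}$ for some $j \geq 1$; explicitly $\langle \psi, \mathcal D_{2^j}^2 \psi(\cdot - k)\rangle$-type quantities. Taking Fourier transforms, $\mathcal D_{2^j}$ becomes $\mathcal D_{2^{-j}}$, and the requirement that all such inner products (over all the integer translation parameters) vanish becomes, again by periodization, precisely $\sum_{k \in \mathbb Z} \hat\psi(2^j(\xi + k))\overline{\hat\psi(\xi+k)} = 0$ a.e. for every $j \geq 1$. \emph{Step 3: assemble.} Combining Steps 1 and 2, orthonormality of $\{\psi_I\}_{I \in \mathcal D}$ is equivalent to the conjunction of the two identities; completeness is then either automatic or invoked from the references, giving the ``orthonormal wavelet basis'' conclusion.

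The step I expect to require the most care is \emph{Step 2}: correctly bookkeeping the interaction between the dyadic dilation and translation parameters so that, after reducing to relative scale $2^j$, the integer translates of the \emph{coarser} function sweep out exactly one period and yield a clean periodization identity. One must check that it suffices to test $j \geq 1$ (the case $j = 0$ being Step 1, and $j < 0$ following by symmetry of the inner product), and that the a.e.\ equality of $1$-periodic functions obtained is genuinely equivalent to the vanishing of \emph{all} the relevant Fourier coefficients, i.e.\ all the cross-scale inner products simultaneously. The rest — Plancherel, the periodization lemma $\int_{\mathbb R} F = \int_0^1 \sum_k F(\xi + k)$, and unitarity of $\mathcal T_{l(I)}$ and $\mathcal D_{|I|}^2$ — is routine and I would not belabor it. Since this theorem is classical and explicitly attributed to \cite{Chui} and \cite{HerWeiss} in the text, I would keep the proof brief, presenting it essentially as the two-periodization-identities computation above and referring the reader to those sources for the details of completeness.
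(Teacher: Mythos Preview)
The paper does not prove this theorem at all: it is stated as a quoted result from the references \cite{Chui} and \cite{HerWeiss}, with no argument given. So there is no ``paper's own proof'' to compare against; your plan to keep the proof brief and defer details to those references is already exactly what the paper does (only more so).

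That said, one point in your sketch deserves correction. You write that ``$\{\psi_I\}$ is automatically a complete system when it is orthonormal.'' This is not a triviality, and as stated it is misleading: orthonormality of a system does not in general imply completeness, and for wavelet systems specifically the implication, while true, is a genuine theorem rather than a general principle. The two periodized identities you derive in Steps~1 and~2 are indeed equivalent to orthonormality of $\{\psi_I\}_{I\in\mathcal D}$, but the passage from orthonormality to \emph{basis} requires an additional argument --- for instance, showing that these same conditions force the system to be a Parseval frame (equivalently, deriving the completeness identities $\sum_{j\in\mathbb Z}|\hat\psi(2^j\xi)|^2=1$ and the companion $t_q$-condition from them), after which a Parseval frame of unit vectors is automatically an orthonormal basis. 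This is precisely the content of the relevant chapters in \cite{HerWeiss}. Your later hedge ``either automatic or invoked from the references'' is the right instinct; just drop the word ``automatic'' and make the deferral explicit.
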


\begin{definition}
For any function $f:\mathbb R\rightarrow \mathbb C$, we say that a bounded function $W:[0,\infty )\rightarrow \mathbb R^{+}$ is a radial decreasing $L^{1}$-majorant of $f$ if $|f(x)|\leq W(|x|)$ and $W$ satisfies the following three conditions:
$W\in L^{1}([0,\infty ))$, 
$W$ is decreasing and 
$W (0)<\infty $.
\end{definition}

\begin{theorem}
Let $\psi \in L^{2}(\mathbb R)$ differentiable and such that $\{ \psi_{I}\}_{I\in {\mathcal D}}$ is an orthonormal basis of $L^{2}(\mathbb R)$. We further assume that  $\psi $ and its derivative $\psi'$ have a common radial decreasing $L^{1}$-majorant $W$ satisfying
$$
\int_{0}^{\infty }xW(x)dx<\infty .
$$
Then, the system $(\psi_{I})_{I\in {\mathcal D}}$ is an unconditional basis for $L^{p}(\mathbb R)$ with $1<p<\infty $ and for $H^{1}(\mathbb R)$. 
\end{theorem}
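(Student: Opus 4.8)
The plan is to deduce the statement from Calder\'on-Zygmund theory, by exhibiting the ``unconditionality operators'' of the system as Calder\'on-Zygmund operators whose constants are uniform in the sign pattern, and then invoking the $L^p$ and $H^1$ boundedness theorems. For a sequence of signs $\epsilon=(\epsilon_I)_{I\in\mathcal D}$ with $\epsilon_I\in\{-1,+1\}$, put
$$
T_\epsilon f=\sum_{I\in\mathcal D}\epsilon_I\langle f,\psi_I\rangle\psi_I .
$$
Since $(\psi_I)_{I\in\mathcal D}$ is orthonormal, $T_\epsilon$ is a self-adjoint unitary operator on $L^2(\mathbb R)$, with norm $1$ uniformly in $\epsilon$. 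The reduction is to the uniform bounds $\sup_\epsilon\|T_\epsilon\|_{L^p\to L^p}<\infty$ for each $1<p<\infty$ and $\sup_\epsilon\|T_\epsilon\|_{H^1\to H^1}<\infty$. Granting these, and recalling that $(\psi_I)_{I\in\mathcal D}$ is already a Schauder basis of $L^p(\mathbb R)$ and $H^1(\mathbb R)$ (see \cite{HerWeiss} and the discussion in Section~2), one observes that for any finite $F\subset\mathcal D$ the subset projection $f\mapsto\sum_{I\in F}\langle f,\psi_I\rangle\psi_I$ equals $\frac12(T_{\mathbf 1}+T_{\epsilon_F})$, where $\epsilon_F$ is $+1$ on $F$ and $-1$ elsewhere; these projections are therefore uniformly bounded, which is precisely the criterion for a Schauder basis to be unconditional. (Equivalently, the same analysis produces the two-sided square-function estimates $\|Sf\|_{L^p}\approx\|f\|_{L^p}$ and $\|Sf\|_{L^1}\approx\|f\|_{H^1}$, from which unconditionality again follows.)

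To obtain the uniform bounds it suffices, by the Calder\'on-Zygmund theorem, to check that $T_\epsilon$ is associated off the diagonal with the kernel
$$
K_\epsilon(x,y)=\sum_{I\in\mathcal D}\epsilon_I\,\psi_I(x)\,\overline{\psi_I(y)},
$$
that this is a standard Calder\'on-Zygmund kernel with constants independent of $\epsilon$ (which gives the $L^p$ bounds), and that in addition $T_\epsilon^*(1)=0$ (which upgrades this to $H^1$). The cancellation is immediate: the majorant $W$ forces $\psi\in L^1(\mathbb R)$, and an orthonormal wavelet with integrable $\psi$ is classically known to satisfy $\int_{\mathbb R}\psi=0$, so $\langle 1,\psi_I\rangle=0$ for every $I$ and hence $T_\epsilon^*(1)=T_\epsilon(1)=0$ (recall $T_\epsilon$ is self-adjoint). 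For the kernel estimates
$$
|K_\epsilon(x,y)|\lesssim\frac1{|x-y|},\qquad
|K_\epsilon(x',y)-K_\epsilon(x,y)|+|K_\epsilon(x,y')-K_\epsilon(x,y)|\lesssim\frac{|x-x'|}{|x-y|^{2}}
$$
(the smoothness estimate when $2|x-x'|<|x-y|$, respectively $2|y-y'|<|x-y|$), one groups the series over dyadic generations. If $|I|=2^{-j}$ then $\psi_I(x)=2^{j/2}\psi(2^jx-k)$; using $|\psi|\le W(|\cdot|)$ and $|\psi'|\le W(|\cdot|)$, together with the elementary observation that for each $k$ at least one of $|2^jx-k|,|2^jy-k|$ exceeds $2^{j-1}|x-y|$, one obtains for every generation
\begin{align*}
\sum_{|I|=2^{-j}}|\psi_I(x)|\,|\psi_I(y)| &\lesssim 2^{j}\,\widetilde W\big(2^{j}|x-y|\big),\\
\sum_{|I|=2^{-j}}|\psi_I(x')-\psi_I(x)|\,|\psi_I(y)| &\lesssim \min\big(1,2^{j}|x-x'|\big)\,2^{j}\,\widetilde W\big(2^{j}|x-y|\big),
\end{align*}
where $\widetilde W$ is again a radial decreasing $L^1$-majorant (a fixed dilate of $W$, permitted to change from line to line) that still satisfies $\int_0^\infty t\,\widetilde W(t)\,dt<\infty$. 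Summing over $j\in\mathbb Z$ and separating the generations according to the sizes of $2^j|x-y|$ and $2^j|x-x'|$, monotonicity lets one dominate the resulting dyadic series by $\int_0^\infty\widetilde W$ and $\int_0^\infty t\,\widetilde W$; this yields the size estimate (for which $\widetilde W\in L^1$ already suffices) and the Lipschitz estimate, the latter being exactly the point where the first-moment hypothesis $\int_0^\infty xW(x)\,dx<\infty$ is used. The regularity in $y$ is identical, and all constants are uniform in $\epsilon$.

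With the kernel bounds and $T_\epsilon^*(1)=0$ in hand, the Calder\'on-Zygmund theorem gives $\sup_\epsilon\|T_\epsilon\|_{L^p\to L^p}<\infty$ for $1<p<\infty$, and its refinement under the cancellation condition gives $\sup_\epsilon\|T_\epsilon\|_{H^1\to H^1}<\infty$; by the first paragraph, $(\psi_I)_{I\in\mathcal D}$ is then an unconditional basis of $L^p(\mathbb R)$, $1<p<\infty$, and of $H^1(\mathbb R)$. The only genuinely nontrivial step is the kernel computation just described --- arranging the double dyadic sum so that the $L^1$-majorant and, crucially, its first moment produce exactly the decays $|x-y|^{-1}$ and $|x-x'|\,|x-y|^{-2}$, uniformly over all sign sequences; everything else is a black-box use of the Calder\'on-Zygmund theorem together with elementary Banach-space arguments.
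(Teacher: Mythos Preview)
The paper does not supply its own proof of this theorem: it is stated in Section~3.2 as a background result quoted from the references \cite{Chui} and \cite{HerWeiss}, with no argument given. Your proposal therefore cannot be compared against a proof in the paper, only against the standard literature proof.

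Your approach is correct and is in fact essentially the classical argument (due to Meyer, and reproduced in \cite{HerWeiss}): one shows that the sign-change operators $T_\epsilon$ are Calder\'on--Zygmund operators with kernel constants independent of $\epsilon$, invokes $L^2$-boundedness (from orthonormality) together with the standard kernel estimates to obtain uniform $L^p$-bounds, and uses the cancellation $T_\epsilon 1=T_\epsilon^*1=0$ to pass to $H^1$. The kernel computation you sketch---bounding a single generation by $2^j\widetilde W(2^j|x-y|)$ and then summing dyadic scales against $\int_0^\infty \widetilde W$ for the size bound and against $\int_0^\infty t\,\widetilde W(t)\,dt$ for the Lipschitz bound---is exactly the standard one, and correctly identifies the first-moment hypothesis as the point needed for the smoothness estimate. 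One small remark: you invoke the Schauder-basis property of $(\psi_I)$ in $L^p$ and $H^1$ as already known in order to reduce unconditionality to uniform boundedness of the $T_\epsilon$; strictly speaking this completeness/density statement is part of what the theorem asserts, so in a self-contained write-up you would also want to note that finite wavelet sums are dense (which follows from the same Calder\'on--Zygmund machinery applied to the partial-sum projections, or from the $L^2$ theory plus density).
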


Now, for our particular purposes, we will take $\psi $ satisfying the hypotheses of previous theorems with the additional condition that 
$\psi \in C^{N}(\mathbb R)$ and it is adapted to $[-1/2,1/2]$ with constant $C>0$ and order $N$. Then, 
we remark the crucial fact that for every interval $I\in {\mathcal D}$, every wavelet function 
$\psi_{I}$ is a bump function adapted to $I$ with the same constant $C>0$ and the same order $N$. Several examples of constructions of systems of wavelets with any required order of differentiability can also be found in \cite{HerWeiss}.

In the described setting, the continuity of $T$ with respect the topology of ${\mathcal S}_{N}(\mathbb R)$, 
allows to write
$$
\langle T(f),g\rangle 
=\sum_{I,J\in {\cal D}}\langle f,\psi_{I}\rangle \langle g, \psi_{J}\rangle \langle T(\psi_I),\psi_J\rangle 
$$
for every $f,g\in {\mathcal S}(\mathbb R)$, 
where the sums run over the whole family of dyadic intervals in $\mathbb R$ and convergence is understood in 
the topology of  ${\mathcal S}_{N}(\mathbb R)$.
Furthermore, since
$$
\langle P_{M}(T(f)),g\rangle 
=\langle T(f),P_{M}g\rangle 
=\sum_{I\in {\cal D}}\sum_{J\in {\cal D}_{M}}\langle f,\psi_{I}\rangle 
\langle g, \psi_{J}\rangle \langle T(\psi_I), \psi_J \rangle ,
$$
we have that  
\begin{equation}\label{orthopro0}
\langle P_{M}^{\perp}(T(f)),g\rangle 
=\sum_{I\in {\cal D}}\sum_{J\in {\cal D}_{M}^{c}}\langle f,\psi_{I}\rangle \langle g, \psi_{J} \rangle 
\langle T(\psi_I),\psi_J\rangle 
\end{equation}
where the summation is performed as in equation \eqref{ortho}.

\subsection{Boundedness of a Martingale transform}\label{martingale}
We now study a new Martingale transform. 
Its definition and the proof of its boundedness on $L^{p}(\mathbb R)$ for $1<p<\infty $
appear in the preprint \cite{PoVi}. 
We include here the endpoint result. 

\begin{definition}
Let $(\psi_{I})_{I\in {\mathcal D}}$ be a wavelet basis of $L^{2}(\mathbb R)$. 
Given $k\in \mathbb Z$ and $n\in \mathbb N$, $n\geq 1$, let
$T_{k,n}$ be the operator defined by
$$
T_{k,n}(f)(x)
=\sum_{I\in {\mathcal D}}\sum_{J\in I_{k,n}}\langle f,\psi_{J}\rangle \psi_{I}(x),
$$
 where for each fixed dyadic interval $I$, $I_{k,n}$ is
the family of all dyadic intervals $J$ such that $|I|=2^k|J|$ and $n\leq \rdist(I,J)<n+1$.
\end{definition}

We remind that for every dyadic interval $I$ and each $n\in \mathbb N$ there are $2^{\max(e,0)+1}$ dyadic intervals $J$ such that
$|I|=2^{e}|J|$ and  $n\leq \rdist(I, J)<n+1$. This implies that the cardinality of
$I_{e,n}$ is comparable to $2^{\max(e,0)}$.

In the proposition below, we prove boundedness of this modified Martingale operator.

\begin{proposition}\label{modifiedmartingaletransform} Let $k\in \mathbb Z$ and $n\in \mathbb N$, $n\geq 1$. Then, 
$T_{k,n}$ is bounded on $\BMO(\mathbb R)$. Moreover, 
$$
\| T_{k,n}f\|_{\BMO(\mathbb R)}
\lesssim 2^{\frac{|k|}{2}}(\log(n+1)+\max(-k,0)+1)^{\frac{1}{2}}\| f\|_{\BMO(\mathbb R)}
$$
with implicit constant independent of $f$, $k$ and $n$.
\end{proposition}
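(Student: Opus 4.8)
The plan is to estimate the $\BMO$ norm of $T_{k,n}f$ directly from the wavelet-square characterization of $\BMO$: for any interval $Q$ we must control $|Q|^{-1}\sum_{I\subset Q}|\langle T_{k,n}f,\psi_I\rangle|^2$, and since the wavelets are orthonormal we have $\langle T_{k,n}f,\psi_I\rangle=\sum_{J\in I_{k,n}}\langle f,\psi_J\rangle$. Thus the quantity to bound is
$$
\frac{1}{|Q|}\sum_{\substack{I\in\mathcal D\\ I\subset Q}}\Big|\sum_{J\in I_{k,n}}\langle f,\psi_J\rangle\Big|^2.
$$
The first step is to apply Cauchy--Schwarz inside the inner sum: since $\#I_{k,n}\lesssim 2^{\max(-k,0)}\cdot$(number of shifts) and, more precisely, $\#I_{k,n}$ is comparable to $2^{\max(-k,0)}$ (the intervals $J$ of a fixed size at relative distance in $[n,n+1)$ number $O(1)$, but when $|J|<|I|$ there are $2^{-k}$ of them inside, times $O(1)$ positions), we get $|\sum_{J\in I_{k,n}}\langle f,\psi_J\rangle|^2\le \#I_{k,n}\sum_{J\in I_{k,n}}|\langle f,\psi_J\rangle|^2$. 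This produces a factor comparable to $2^{\max(-k,0)}$ and leaves us to bound $|Q|^{-1}\sum_{I\subset Q}\sum_{J\in I_{k,n}}|\langle f,\psi_J\rangle|^2$.

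The second, and main, step is the reorganization of this double sum by the index $J$ rather than $I$: each $J$ contributes to those $I$ with $|I|=2^k|J|$ and $n\le\rdist(I,J)<n+1$, and for fixed $J$ there are $O(1)$ such $I$ when $k\ge 0$ and $O(1)$ as well when $k<0$ (the larger interval being $J$), so the multiplicity of each $J$ is $O(1)$; hence
$$
\frac{1}{|Q|}\sum_{\substack{I\subset Q\\ J\in I_{k,n}}}|\langle f,\psi_J\rangle|^2\lesssim \frac{1}{|Q|}\sum_{J\in\mathcal S_Q}|\langle f,\psi_J\rangle|^2,
$$
where $\mathcal S_Q$ is the set of $J$ that appear, i.e. $J$ such that some $I$ with $I\subset Q$, $|I|=2^k|J|$, $\rdist(I,J)\in[n,n+1)$ exists. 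The key geometric observation is that every such $J$ is contained in a bounded dilate of $Q$: since $I\subset Q$ and $J$ lies within $\rdist(I,J)\cdot\max(|I|,|J|)\approx (n+1)\max(2^k,1)|J|$ of $I$, one has $J\subset c\,(n+1)2^{\max(k,0)} Q=:\widetilde Q$ (after enlarging $Q$ by that factor). Then $|Q|^{-1}\sum_{J\subset\widetilde Q}|\langle f,\psi_J\rangle|^2=(|\widetilde Q|/|Q|)\cdot|\widetilde Q|^{-1}\sum_{J\subset\widetilde Q}|\langle f,\psi_J\rangle|^2\le (|\widetilde Q|/|Q|)\,\|f\|_{\BMO}^2$, and $|\widetilde Q|/|Q|\approx (n+1)2^{\max(k,0)}$.

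Collecting the factors gives $\|T_{k,n}f\|_{\BMO}^2\lesssim 2^{\max(-k,0)}\cdot(n+1)\,2^{\max(k,0)}\,\|f\|_{\BMO}^2$, which is worse than claimed — it is $2^{|k|}(n+1)$ rather than $2^{|k|}(\log(n+1)+\max(-k,0)+1)$. So the real work, and the main obstacle, is to avoid both the crude Cauchy--Schwarz loss of $\#I_{k,n}$ and the crude $|\widetilde Q|/|Q|\approx n$ loss. The fix is to exploit cancellation/almost-orthogonality more carefully: instead of enclosing all contributing $J$ in one dilate $\widetilde Q$, decompose the $J$'s according to their distance from $Q$ into dyadic annuli at scale $2^\ell|Q|$, $\ell\ge 0$; a $J$ at relative distance $\sim n$ from its partner $I\subset Q$ with $|J|=2^{-k}|I|$ lies at distance $\lesssim n\,2^{\max(k,0)}|J|$ from $Q$, which is $\lesssim |Q|$ for the $J$'s with $|J|\gtrsim n2^{\max(k,0)}|Q|^{-1}\cdot$, i.e. only $\log(n2^{\max(k,0)})$ many scales of $J$ actually reach outside $Q$, and each such scale, being governed by the $\BMO$ bound applied at that scale, contributes $\lesssim 2^{-k}\|f\|_{\BMO}^2$ after summing the $O(1)$-overlap in $I$ (the $2^{-k}$ being the number of $J$ inside one $I$ at a fixed position). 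Summing $\log(n2^{\max(k,0)})\approx \log(n+1)+\max(k,0)$ scales each of size $\lesssim 2^{\max(-k,0)}\|f\|_{\BMO}^2$, together with the $2^{\max(k,0)}$ from enlarging $Q$ only when $k>0$, and using $\max(k,0)+\max(-k,0)=|k|$, yields the stated $2^{|k|/2}(\log(n+1)+\max(-k,0)+1)^{1/2}$ after taking square roots. (One must be a little careful that the annular decomposition is compatible with $J$ ranging over \emph{all} admissible sizes simultaneously; grouping first by the size ratio, which is fixed to $2^{-k}$, and then by position is what makes the counting clean.) I expect that balancing these two sources of loss — the cardinality of $I_{k,n}$ against the number of relevant annuli — is precisely where the logarithm and the $\max(-k,0)$ enter, and getting the bookkeeping exactly right is the crux of the argument.
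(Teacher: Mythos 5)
Your counting has the two basic multiplicities backwards, and this error propagates. For $|I|=2^{k}|J|$ and $k\geq 0$, $J$ is the \emph{smaller} interval; an annulus of width $|I|$ around $I$ contains $\approx 2^{k}$ intervals of size $2^{-k}|I|$, so $\# I_{k,n}\approx 2^{\max(k,0)}$, not $2^{\max(-k,0)}$ as you write. Dually, the number of $I$ with $J\in I_{k,n}$ for a fixed $J$ is $\approx 2^{\max(-k,0)}$, not $O(1)$: when $k<0$ the $I$'s are smaller, and $\approx 2^{-k}$ of them sit at relative distance $\approx n$ from the same large $J$. These two miscounts happen to compensate in your crude first pass (you still obtain a factor $2^{|k|}$), but they are both wrong, and the second one — the multiplicity of $J$ — is precisely the quantity the paper tracks carefully as $\min(2^{j},2^{-k})$ in the small-scale case. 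Your proposal has no analogue of this refined weight, nor of the measure bound $|\bigcup_{J}J|\lesssim\max(2^{-k-j},1)|Q|$ for the union of contributing $J$'s at scale $j$, both of which are essential for the $k\leq 0$ case.

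The ``fix'' you sketch — decomposing the $J$'s by distance from $Q$ into dyadic annuli — is also not carried through, and where it matters most it has a sign error. From your own relation $\diam(I\cup J)\approx n\,2^{\max(k,0)}|J|$, the intervals $J$ escape a fixed dilate of $Q$ only for scales $j$ of $I$ with $2^{j+k}\lesssim n\,2^{\max(k,0)}$, i.e.\ $j\lesssim\log(n+1)+\max(k,0)-k=\log(n+1)+\max(-k,0)$. You write $\log(n+1)+\max(k,0)$ instead, which is exactly the wrong sign and does not give the claimed bound (for $k\geq 0$ it would produce $\log(n+1)+k$, which is too large). The paper decomposes instead by the dyadic scale $j$ of $I$ relative to $Q$ (the sets $Q^{j}$), splits into $J\subset 3Q$ versus $J\not\subset 3Q$, and shows the second case only occurs for $0\leq j\lesssim\log(n+1)+\max(-k,0)$; for the $k\geq 0$ case it first uses the at-most-four overlap bound on the multiplicity of $J$, and for $k\leq 0$ it bypasses Cauchy--Schwarz entirely (the cardinality of $I_{k,n}$ being $O(1)$) and instead weights by $\min(2^{j},2^{-k})$. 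Your sketch is morally aiming at this scale-decomposition but has the core arithmetic inverted and omits the bookkeeping that makes the $k\leq 0$ case work, so as written it does not prove the stated bound.
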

\begin{remark} 
By duality and the fact that
$
T_{k,n}^{*}=T_{-k,n}
$,
we have that $T_{k,n}$ is also bounded on $H^1(\mathbb R)$ with
$$
\| T_{k,n}f\|_{H^1(\mathbb R)}
\lesssim 2^{\frac{|k|}{2}}(\log(n+1)+\max(k,0)+1)^{\frac{1}{2}}\| f\|_{H^1(\mathbb R)}
$$
\end{remark}
\proof
Since for any given $f\in {\mathcal S}(\mathbb R)$,
\begin{equation*}
\| T_{k,n}f\|_{\BMO(\mathbb R)}
=\sup_{Q \subset \mathbb R}
\Big(|Q|^{-1}\sum_{I\subset Q}
\Big|\sum_{J\in I_{k,n}}\langle f,\psi_{J}\rangle \Big|^2\, \Big)^{\frac{1}{2}},
\end{equation*}
where the supremum is calculated over all intervals $Q \subset \mathbb R$, 
we will show that 
\begin{align}\label{normofT}
\sum_{I\subset Q}&
\Big|\sum_{J\in I_{k,n}}\langle f,\psi_{J}\rangle \Big|^2
\\
\nonumber
&
\lesssim  2^{|k|}(\log(n+1)+\max(-k,0)+1)\| f\|_{\BMO(\mathbb R)}^{2}|Q|.
\end{align}

In order to compute the double sum, we use an argument that distinguishes between large and small scales
($k\geq 0$ and $k\leq 0$), with a slightly different argument in each case.

We first assume $k\geq 0$. In this case, the cardinality of $I_{k,n}$ is comparable to $2^{k}$ and so,
every interval $I\in {\mathcal D}$ is associated with $2^{k}$ different
intervals $J\in I_{k,n}$. Therefore, by Cauchy's inequality, the contribution of those intervals collected 
in the sum in (\ref{normofT}) can be bounded by
\begin{equation}\label{normofT2}
\sum_{I\subset Q}2^{k}\sum_{J\in I_{k,n}}|\langle f,\psi_{J}\rangle |^2
=2^{k}\sum_{j\in \mathbb N}\sum_{I\in Q^{j}}\sum_{J\in I_{k,n}}|\langle f,\psi_{J}\rangle |^2 ,
\end{equation}
where $Q^{j}=\{ I\in {\mathcal D}: I\subset Q , 2^{-(j+1)}|Q|< |I|\leq 2^{-j}|Q|\}$.

Now,
we separate again into two different cases: when $J\subset 3Q$ and when $J\nsubseteq 3Q $.

1) In the first case, we start by showing that the intervals $J$ in the inner sum of (\ref{normofT2})
only appear at most four times. This will be clear once we prove that given $I\in {\mathcal D}$ and $J\in I_{k,n}$ there exist at most four different intervals $I'\in {\mathcal D}$, $I'\neq I$, such that $J\in I'_{k,n}$. 

If $J\in I_{k,n}\cap I'_{k,n}$ then $|I|=2^{k}|J|=|I'|$. Now, we denote $I_{n}=(I+n|I|)\cup (I-n|I|)$. Since $k\geq 0$, we have that 
$J\subset I_{n}\cap I'_{n}\neq \emptyset $. Then, if $n>1$, this implies $\rdist (I,I')=2n$ and so, $I'=I+n|I|$ or 
$I'=I-n|I|$. On the other hand, if $n=1$, this implies $\rdist (I,I')\in \{1,2\}$ and so, $I'=I+|I|$, $I'=I-|I|$, $I'=I+2|I|$ or $I'=I-2|I|$.

Therefore, 
the terms
in the inner sum of (\ref{normofT2}) corresponding to this case 
can be bounded by a constant times
$$
2^{k}\sum_{J\subset 3Q}|\langle f,\psi_{J}\rangle |^2
\lesssim 2^{k}\| f\|_{\BMO(\mathbb R)}^2|Q|,
$$
which is compatible with the stated bound. 

2) In the second case, for those intervals $I$, $J$ such that $I\subset Q$ and $J\nsubseteq 3Q$, we have
$\diam(I\cup J)>|Q|$. 
Then, for every $I\in Q^{j}$ we get 
$$
n+1>\rdist(I,J)= \frac{\diam (I\cup J)}{|I|}>\frac{|Q|}{|I|}\geq 2^{j},
$$ 
where we have used that $|J|\leq |I|$. 

We now show that, for every $j$, 
the union of the disjoint intervals $J\in I_{k,n}$ when varying $I\in Q^{j}$  has measure at most $2|Q|$. 
For fixed $I$, the union of the disjoint intervals $J\in I_{k,n}$ measures $2|I|$. Moreover, the union of the disjoint intervals 
$I\in Q^{j}$ measures at most $|Q|$. 
Therefore, 
$$
|\bigcup_{I\in Q^{j}}\bigcup_{J\in I_{k,n}}J|\leq \sum_{I\in Q^{j}}\sum_{J\in I_{k,n}}|J|
\leq 2\sum_{I\in Q^{j}}|I|\leq 2|Q| .
$$

This way, 
the relevant contribution of this case to the sum in (\ref{normofT2}) can be bounded by
\begin{align*}
2^{k}\sum_{j=0}^{\log(n+1)}&
\sum_{\tiny \begin{array}{c}I\in Q^{j}\\ J\in I_{k,n}\end{array}}|\langle f,\psi_{J}\rangle |^2
\lesssim 2^{k}\| f\|_{\BMO(\mathbb R)}^2\sum_{j=0}^{\log(n+1)}
\Big| \hspace{-.3cm} 
\bigcup_{\tiny \begin{array}{c}I\in Q^{j}\\ J\in I_{k,n}\end{array}} J\Big| 
\\
&\lesssim 2^{k}\| f\|_{\BMO(\mathbb R)}^2\sum_{j=0}^{\log(n+1)}|Q|
=2^{k}(1+\log(n+1))\| f\|_{\BMO(\mathbb R)}^2|Q|,
\end{align*}
which is the desired bound when $k\geq 0$.

\vskip10pt
For $k\leq 0$ we reason as follows. The cardinality of $I_{k,n}$ is now 
essentially one and there are at most $2^{j}$ intervals $I\in Q^{j}$. But now, 
up to $2^{-k}$ different intervals $I$ of fixed size in the sum (\ref{normofT}) are associated with the same
interval $J$ and so, with the same coefficient $\langle f,\psi_J\rangle$. Then, 
if we denote 
$Q_{k,n}^{j}=\{ J\in I_{k,n}: I\in Q^{j}\}$, we have that 
the terms in the sum 
(\ref{normofT}) corresponding to this case can be bounded by
\begin{equation}\label{normofT3}
\sum_{j\in \mathbb N}
\sum_{J\in Q_{k,n}^{j}}
\min (2^{j},2^{-k})|\langle f,\psi_{J}\rangle |^2,
\end{equation}
where 
now the intervals $J\in Q_{k,n}^{j}$ appearing in the sum are pairwise different. Moreover, 
since $|I|=2^{k}|J|$ and $2^{-(j+1)}|Q|<|I|\leq 2^{-j}|Q|$ 
we get $2^{j+k}\leq |Q|/|J|<2^{j+k+1}$.  

We separate the study into the same two cases as before: $J\subset 3Q$ and $J\nsubseteq 3Q$. 

1) When $I \subset Q$ and $J \subset 3Q$ we have
$$
n\leq \rdist(I,J)=\frac{\diam (I\cup J)}{|J|}\leq \frac{3|Q|}{|J|}<2^{j+k+4}
$$ 
Therefore, 
$j>\log n-k-4\geq -k-4$
and so, the contribution of the intervals in this case to the sum (\ref{normofT3}) can be bounded by
\begin{align*}
\sum_{-k-4<j\leq -k}&
\sum_{\tiny \begin{array}{c} J\in Q_{k,n}^{j}\\
J\subset 3Q\end{array}}2^{j}|\langle f,\psi_{J}\rangle |^2
+\sum_{-k\leq j}
\sum_{\tiny \begin{array}{c} J\in Q_{k,n}^{j}\\J\subset 3Q\end{array}}
2^{-k}|\langle f,\psi_{J}\rangle |^2
\\
&\lesssim 2^{-k}\sum_{J\subset 3Q}|\langle f,\psi_{J}\rangle |^2
\lesssim 2^{-k}\| f\|_{\BMO(\mathbb R)}^2|Q| .
\end{align*}

2) On the other hand, for those $J$ such that $J\nsubseteq 3Q$ we have that 
$$
n+1>\rdist(I,J)=\frac{\diam (I\cup J)}{|J|}>\frac{|Q|}{|J|}\geq 2^{j+k}
$$ 
and so 
$j<\log(n+1)-k$. 
Then, the contribution to sum (\ref{normofT3}) can be estimated by
\begin{align}\label{unionJ}
\nonumber
\sum_{j=0}^{\log(n+1)+|k|}&\min (2^{j},2^{-k})\sum_{J\in Q_{k,n}^{j}}|\langle f,\psi_{J}\rangle |^2
\\
&\leq \sum_{j=0}^{\log(n+1)+|k|}\min (2^{j},2^{-k})
\| f\|_{\BMO(\mathbb R)}^2 \, \, \Big| \hspace{-.3cm} 
\bigcup_{J\in Q_{k,n}^{j}} J\Big| .
\end{align}

We now calculate the measure of the union of those intervals $J\in Q_{k,n}^{j}$. 
If $j\geq -k$ then, from the different $2^{j}$ possible intervals $I\in Q^{j}$, up to $2^{-k}$ of them are associated with the same interval $J$. Then, the union of those 
intervals $J$ has measure
$
\frac{2^{j}}{2^{-k}}|J|\leq |Q|.
$
On the other hand, when $j<-k$, there is only a single interval $J$ associated with all intervals $I\in Q^{j}$, which measures $|J|\leq |Q|2^{-(k+j)}$. Then, 
the union has measure at most $\max(2^{-k-j},1)|Q|$ and thus, we bound \eqref{unionJ} by
\begin{align*}
&\| f\|_{\BMO(\mathbb R)}^2 \sum_{j=0}^{\log(n+1)+|k|}2^{j}\min (1,2^{-k-j})\max(2^{-k-j},1)|Q|
\\
&=\| f\|_{\BMO(\mathbb R)}^2\sum_{j=0}^{\log(n+1)+|k|}2^{-k}|Q|
\\
&=2^{-k}(\log(n+1)+|k|+1)\| f\|_{\BMO(\mathbb R)}^2|Q| .
\end{align*}
This finishes the proof.

\subsection{Sufficiency of the hypotheses: proof of endpoint compactness}
In this subsection, we prove compactness of singular integral operators $T$ as maps from $L^\infty(\mathbb R)$ 
to $\CMO(\mathbb R)$.

To prove this result, we follow the scheme of the original proof of the $T(1)$ Theorem. Namely, we first assume that the special cancellation property $T(1)=T^{*}(1)=0$ holds, and then we tackle the general case with the use of paraproducts. 
Actually, we prove that under the special cancellation conditions the operator $T$ extends compactly from 
$\BMO(\mathbb R)$ into $\CMO(\mathbb R)$.

\subsubsection{The special case: $T(1)=T^{*}(1)=0$}
We start by proving the main result 
under the special cancellation conditions.
\begin{theorem}
\label{BMObounds}
Let
$T$ be a linear operator associated with a compact Calder\'on-Zygmund kernel 
satisfying the weak compactness condition 
and the special cancellation conditions $T(1)=0$ and $T^{*}(1)=0$.

Then, $T$ can be extended to a compact operator from $\BMO(\mathbb R)$ into $\CMO(\mathbb R)$.
\end{theorem}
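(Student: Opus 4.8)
The plan is to use the compactness criterion of Theorem \ref{charofcompact} in the form stated after equation \eqref{ortho2}: it suffices to show that $\|P_M^\perp \circ T\| \to 0$ as $M \to \infty$, where the operator norm is that of maps $\BMO(\mathbb R) \to \CMO(\mathbb R)$, and where $P_M^\perp$ is the orthogonal lagom projection on $\CMO(\mathbb R)$ built from a compactly supported smooth wavelet basis $(\psi_I)_{I\in\mathcal D}$ whose generator is a bump function adapted to $\mathbb B$ with order $N$. Writing $T$ in the wavelet basis via $\langle T(f),g\rangle = \sum_{I,J}\langle f,\psi_I\rangle\langle g,\psi_J\rangle \langle T(\psi_I),\psi_J\rangle$ and using \eqref{orthopro0}, the task reduces to estimating, for $g \in H^1(\mathbb R)$ with $\|g\|_{H^1}\le 1$ and $f \in \BMO(\mathbb R)$ with $\|f\|_{\BMO}\le 1$, the bilinear sum $\sum_{I\in\mathcal D}\sum_{J\in\mathcal D_M^c}\langle f,\psi_I\rangle\langle g,\psi_J\rangle\langle T(\psi_I),\psi_J\rangle$, and showing it tends to $0$ uniformly as $M\to\infty$.

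The first key step is to insert the bump estimate of Proposition \ref{symmetricspecialcancellation}: since $T(1)=T^*(1)=0$, we have $|\langle T(\psi_I),\psi_J\rangle| \lesssim \ec(I,J)^{1/2+\delta'}\rdist(I,J)^{-(1+\delta')}(F(I_1,\dots,I_6;M_{T,\epsilon})+\epsilon)$. Fixing $\epsilon>0$ and the corresponding parameter $M_{T,\epsilon}$, the error term $\epsilon$ contributes, after summing the geometric factor $\ec^{1/2+\delta'}\rdist^{-(1+\delta')}$ against the Schauder coefficients, a quantity controlled by $\epsilon$ times the relevant norms — here is where the modified Martingale transform enters. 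Indeed, reorganizing the sum by the relative scale $k$ with $|I| = 2^k|J|$ and the relative distance $n \le \rdist(I,J) < n+1$, one recognizes $T_{k,n}$, and Proposition \ref{modifiedmartingaletransform} (together with its $H^1$ dual version in the Remark) gives $\|T_{k,n}\|_{\BMO\to\BMO}, \|T_{k,n}\|_{H^1\to H^1} \lesssim 2^{|k|/2}(\log(n+1)+\max(\pm k,0)+1)^{1/2}$; the geometric gain $\ec(I,J)^{1/2+\delta'}\rdist(I,J)^{-(1+\delta')} = 2^{-|k|(1/2+\delta')} n^{-(1+\delta')}$ beats the logarithmic-times-$2^{|k|/2}$ growth, so the double series over $(k,n)$ converges. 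This shows the $\epsilon$-part of the bound contributes $O(\epsilon)$ uniformly in $M$.

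The second, and harder, step is to show that the $F(I_1,\dots,I_6;M_{T,\epsilon})$-part goes to $0$ as $M\to\infty$ once $J$ ranges only over $\mathcal D_M^c$. The functions $F_K$ and $F_W$ are built from admissible functions $L,S,D$ satisfying the limits \eqref{limits}, and $J\notin\mathcal D_M$ forces one of $|J|>2^M$, $|J|<2^{-M}$, or $|J|\in[2^{-M},2^M]$ with $c(J)$ far out (so $\rdist(J,\mathbb B_{2^M})>M$); in each subcase one of the six auxiliary intervals $I_i$ inherits a small $|I_i|$, a large $|I_i|$, or a large $\rdist(I_i,\mathbb B)$, so one of the factors $L(|I_i|)$, $S(|I_i|)$, or $D(\rdist)$ is small, and this smallness is uniform over $J\in\mathcal D_M^c$ as $M\to\infty$. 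The delicate point — and the main obstacle — is that the $I$-summation runs over \emph{all} dyadic intervals with no lagom restriction, so one cannot simply pull the smallness out of the sum; instead one must again split by $k$ and $n$, apply the modified Martingale transform bounds to handle the $I$- and $J$-coefficient sums, and carefully track that the worst case of the admissible-function factors (as a function of $k$, $n$, and $M$) still tends to zero after being summed against the convergent geometric-and-logarithmic series. Combining the two steps: for each $\epsilon>0$ we get $\limsup_{M\to\infty}\|P_M^\perp\circ T\| \lesssim \epsilon$, hence the limit is zero and $T$ is compact from $\BMO(\mathbb R)$ into $\CMO(\mathbb R)$. A final routine remark is needed to justify that the bilinear form identity extends from Schwartz $f,g$ to all of $\BMO(\mathbb R)$ and $H^1(\mathbb R)$ by density and the weak-$*$ interpretation of \eqref{ortho2}, and that $P_M^\perp T$ indeed maps into $\CMO(\mathbb R)$ by characterization \eqref{CMO2}.
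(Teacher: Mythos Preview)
Your overall strategy matches the paper's: expand $\langle P_M^\perp(Tf),g\rangle$ via the wavelet basis, apply Proposition~\ref{symmetricspecialcancellation}, reorganize by eccentricity $e$ and relative distance $n$, and use Proposition~\ref{modifiedmartingaletransform} to sum. The treatment of the $\epsilon$-part is correct and coincides with the paper's first case.

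However, your handling of the $F(I_1,\dots,I_6;M_{T,\epsilon})$-part has a genuine structural gap. Recall that $F(I_1,\dots,I_6)$ is a \emph{product of sums}, $\big(\sum_i L(|I_i|)\big)\big(\sum_i S(|I_i|)\big)\big(\sum_i D(\rdist(I_i,\mathbb B))\big)$, so knowing that \emph{one} of the six intervals (namely $I_2=J$) lies outside $\mathcal D_M$ does not make this quantity small: the other five summands in each factor may be of order one. The paper therefore does \emph{not} argue that $F$ is uniformly small when $J\in\mathcal D_M^c$. Instead it performs a dichotomy: either \emph{all} six $I_i$ lie outside $\mathcal D_M$, in which case each $F(I_i)\lesssim\epsilon$ and hence $F(I_1,\dots,I_6)\lesssim\epsilon$; or \emph{some} $I_i\in\mathcal D_M$, in which case $F$ is merely bounded and the smallness must come from elsewhere.

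For the second alternative the paper uses a device you omit: it restricts $J$ to $\mathcal D_{2M}^c$ (not $\mathcal D_M^c$) while testing the $I_i$ against $\mathcal D_M$. The gap between $2M$ and $M$ is exactly what forces, in each of the cases $I\in\mathcal D_M$, $\langle I,J\rangle\in\mathcal D_M$, $\lambda_j\tilde K_{\max}\in\mathcal D_M$, $\lambda_2 K_{\min}\in\mathcal D_M$, either the eccentricity to satisfy $|e|\geq M$ or the relative distance to satisfy $n\gtrsim M$ (or $n\gtrsim 2^{M-|e|}$, etc.). This restricts the $(e,n)$-sum to a tail, and the decay $2^{-|e|\delta}|e|^{1/2}n^{-(1+\delta/2)}$ then yields a bound like $M^{-\delta/2}$ or $2^{-M\delta/2}$, which is $\lesssim\epsilon$ by choice of $M$. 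Your phrase ``carefully track that the worst case of the admissible-function factors still tends to zero'' does not capture this mechanism: in these cases the admissible factors are \emph{not} small, and the smallness comes purely from the geometry forcing extreme eccentricity or relative distance.
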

\proof
Let $(\psi_{I})_{I\in {\mathcal D}}$ be a wavelet basis of $L^{2}(\mathbb R)$ and $H^{1}(\mathbb R)$
with $L^2$-normalized elements. Let $P_{M}$ be the lagom projection operator defined by this basis. 
By the remarks at the end of Subsection \ref{subsectionCMO},
we have that all functions in 
$\BMO(\mathbb R)$ and $H^{1}(\mathbb R)$ can be approximated by functions in 
${\mathcal S}_{N}(\mathbb R)$ with convergence in the weak* topology 
$\sigma (\BMO(\mathbb R), H^{1}(\mathbb R))$ and in the $H^{1}(\mathbb R)$-norm 
respectively. 

Then, by Theorem \ref{charofcompact} with $E=\CMO(\mathbb R)$ equipped with the norm of $\BMO(\mathbb R)$, in order to show compactness of $T$, we 
need to check that
$P_{M}^{\perp}(T_{b})$ converges to zero in the operator norm 
$\| \cdot \|_{\BMO(\mathbb R)\rightarrow \BMO(\mathbb R)}$ when $M$ tends to infinity.
For this, it is enough to prove that 
$
\langle P_{M}^{\perp}(T(f)),g\rangle 
$
tends to zero uniformly for all $f,g \in {\mathcal S}_{N}(\mathbb R)$ in the unit ball of 
$\BMO(\mathbb R)$ and $H^{1}(\mathbb R)$ respectively.

For $f,g\in {\mathcal S}_{N}(\mathbb R)$, we recall 
\begin{equation}\label{orthoprojendpoint}
\langle P_{M}^{\perp}(T(f)),g\rangle 
=\sum_{I\in {\mathcal D}}\sum_{J\in {\cal D}_{M}^{c}}\langle f,\psi_{I}\rangle \langle g,\psi_{J}\rangle 
\langle T(\psi_I),\psi_J\rangle .
\end{equation}
Since this inequality is understood as a limit, 
we can assume that the sums run over finite but arbitrary in size families of dyadic intervals and we will work
to obtain bounds that are are independent of the cardinality of the families.

We start by proving that for every 
 $\epsilon >0$ there is $M_{0}\in \mathbb N$ 
such that for any $M>M_{0}$, 
we have $F(I_{1},\ldots ,I_{6};M_{T,\epsilon })\lesssim \epsilon $ for $I_{i}\in {\cal D}_{M}^{c}$. This will follow as a consequence
of the inequality $F(I;M_{T,\epsilon })\lesssim \epsilon $ for every $I\in {\cal D}_{M}^{c}$. We note that the implicit constants only depend on the admissible functions. 

We first remind that by the definition of the weak compactness condition, 
for $\epsilon >0$  there is $M_{T,\epsilon }>0$ a constant such that 
for any interval $I$ and any $\phi_{I},\varphi_{I}$ bump functions adapted to $I$ with constant $C>0$ and order $N$, we have
$$
|\langle T(\phi_{I}),\varphi_{I}\rangle |\leq C(F_{W}(I;M_{T,\epsilon })+\epsilon) .
$$

We now show that there is  
$M_{0}'\in \mathbb N$ 
depending on $\epsilon $ 
such that for any $M>M_{0}'$, we have 
$F(I_{i};M_{T,\epsilon })=F(I_{1},\ldots ,I_{6};M_{T,\epsilon})\lesssim \epsilon $ when all $I_{i}\in {\cal D}_{M}^{c}$.

By the limit properties of the admissible functions in Definition \ref{admissible}, we have that for fixed $M_{T,\epsilon }>0$,  there is
$M_{0}'\in \mathbb N$, depending on $\epsilon, M_{T,\epsilon }$, with $M_{0}'>M_{T,\epsilon }$,  
such that for any $M>M_{0}'$, we get 
$$
L_{K}(2^{M})+S_{K}(2^{-M})+ D_{K}(M)<\epsilon
$$
and 
$$
L_{W}(2^{M-M_{T,\epsilon }})+S_{W}(2^{-(M-M_{T,\epsilon })})+D_{W}(M/M_{T,\epsilon })<\epsilon .
$$ 

Let $I\in {\cal D}_{M}^{c}$. The claim is proven by considering the following cases: 
\begin{enumerate}
\item If $|I|>2^{M}$ then, since $L_{K}$ and $L_{W}$ are non-increasing, we have  
\begin{align*}
F(I;M_{T,\epsilon})&\lesssim L_{K}(|I|)+L_{W}(|I|/2^{M_{T,\epsilon }})
\\
&\leq L_{K}(2^{M})+L_{W}(2^{M-M_{T,\epsilon }})\lesssim \epsilon .
\end{align*}

\item If $|I|<2^{-M}$ then, since $S_{K}$ and $S_{W}$ are non-decreasing, we have 
\begin{align*}
F(I;M_{T,\epsilon})&\lesssim S_{K}(|I|)+S_{W}(2^{M_{T,\epsilon }}|I|)
\\
&\leq S_{K}(2^{-M})+S_{W}(2^{-(M-M_{T,\epsilon })})
\lesssim \epsilon .
\end{align*}

\item If $2^{-M}\leq |I|\leq 2^{M}$ with $\rdist (I,\mathbb B_{2^{M}})>M$ then, as we saw in the remark after 
Definition \ref{Imdef}, 
$|c(I)|>(M-1)2^{M}$. Therefore,
\begin{align*}
\rdist (I,\mathbb B_{2^{M_{T,\epsilon }}})&\geq 1+\frac{|c(I)|}{\max(|I|,2^{M_{T,\epsilon }})}
\\
&\geq 1+\frac{(M-1)2^{M}}{\max(2^{M},2^{M_{T,\epsilon }})}
\geq M .
\end{align*}
We can apply a similar reasoning to show that we also have $\rdist (I,\mathbb B)>M$. 
Then, since $D_{W}$ is non-increasing, we have 
\begin{align*}
F(I;M_{T,\epsilon})&\lesssim  D_{K}(\rdist (I,\mathbb B))
+D_{W}(M_{T,\epsilon }^{-1}\rdist (I,\mathbb B_{2^{M_{T,\epsilon }}}))
\\
&\leq D_{K}(M)+D_{W}(M/M_{T,\epsilon })\lesssim \epsilon  .
\end{align*}

\end{enumerate}

Therefore, there is finally  
$M_{0}\in \mathbb N$ 
depending on $\epsilon $ 
such that for any $M>M_{0}$, we have 
\begin{enumerate}
\item $F(I_{i};M_{T,\epsilon })=F(I_{1},\ldots ,I_{6};M_{T,\epsilon})\lesssim \epsilon $ when all $I_{i}\in {\cal D}_{M}^{c}$ 
\item $M^{-\frac{\delta}{2}}+M^{\frac{3}{2}}2^{-M\frac{\delta }{2}}+\sum_{e\geq M}2^{-e\delta }e^{1/2}<\epsilon .$
\end{enumerate}

Now, for every $\epsilon >0$ and chosen $M_{0}\in \mathbb N$, we are going to prove that for all 
$M>M_{0}$ we have
$$
|\langle P_{2M}^{\perp}(T(f)),g\rangle |\lesssim \epsilon ,
$$
with the implicit constant depending on $\delta >0$ and the constant given by the wavelet basis.

We first parametrize the terms in (\ref{orthoprojendpoint}) according to eccentricity and relative distance 
to obtain
\begin{equation}\label{compactendpoint}
\langle P_{2M}^{\perp}(T(f)),g\rangle 
=\sum_{e\in \mathbb Z}\sum_{n\in \mathbb N}
\sum_{\tiny \begin{array}{c}J{\in \cal D}_{2M}^{c}\end{array}}
\sum_{I\in J_{e,n}}
\langle f,\psi_{I}\rangle \langle g,\psi_{J}\rangle \langle T(\psi_I),\psi_J\rangle ,
\end{equation}
where for fixed eccentricity $e\in \mathbb Z$, relative distance $n\in \mathbb N$ and every given interval $J$,
$$
J_{e,n}=\{ I:|I|=2^{e}|J|, n\leq \rdist(I, J)< n+1 \} .
$$

By Proposition \ref{symmetricspecialcancellation} we have 
$$
|\langle T(\psi_I),\psi_J\rangle |
\lesssim 2^{-|e|(\frac{1}{2}+\delta )}n^{-(1+\delta )}(F(I_{i};M_{T,\epsilon })+\epsilon ) ,
$$
where $I_{1}=I$, $I_{2}=J$, $I_{3}=\langle I,J\rangle$ , $I_{4}=\lambda_{1}\tilde{K}_{max}$, 
$I_{5}=\lambda_{2}\tilde{K}_{max}$ and  $I_{6}=\lambda_{2}K_{min}$.
with parameters $\lambda_{1},\lambda_{2}\geq 1$ explicitly 
stated in the mentioned Proposition.  
To simplify notation, we will simply write $F(I_{i})$. 
We also note that the implicit constant might depend on $\delta $ and the wavelet basis, but it is universal otherwise. Therefore, 
\begin{align}\label{moduloin2}
|\langle P_{2M}^{\perp}(T(f)),g\rangle | & \lesssim 
\sum_{e\in \mathbb Z}\sum_{n\in \mathbb N}2^{-|e|(\frac{1}{2}+\delta )}n^{-(1+\delta )}
\\
\nonumber
&\sum_{\tiny \begin{array}{c}J{\in \cal D}_{2M}^{c}\end{array}}\sum_{I\in J_{e,n}}
\big( F(I_{i})+\epsilon \big)
|\langle f,\psi_{I}\rangle | |\langle g,\psi_{J}\rangle | .
\end{align}

Now, in order to estimate \eqref{moduloin2}, we divide the study into six cases:
\vskip5pt
\noindent
\hspace{-.5cm}
\begin{minipage}{7cm}
\begin{enumerate}
\item $I_{i}\notin {\cal D}_{M}$ for all $i=1,\ldots ,6$
\item $I\in {\cal D}_{M}$
\item $\langle I\cup J\rangle \in {\cal D}_{M}$
\end{enumerate}
\end{minipage}
\hspace{-.5cm}
\begin{minipage}{7cm}
\begin{enumerate}
\item[(4)] $I\notin {\cal D}_{M}$ but $\lambda_{1}\tilde{K}_{max}\in {\cal D}_{M}$
\item[(5)] $I\notin {\cal D}_{M}$ but $\lambda_{2}\tilde{K}_{max}\in {\cal D}_{M}$
\item[(6)] $I\notin {\cal D}_{M}$ but $\lambda_{2}K_{min} \in {\cal D}_{M}$
\end{enumerate}
\end{minipage}
\vskip10pt

1) In the first case we have
$F(I_{i})<\epsilon $, and 
thus, we can bound the contribution of the corresponding terms to (\ref{compactendpoint}) by
\begin{equation}\label{compactonCMO}
\epsilon \sum_{e\in \mathbb Z}\sum_{n\in \mathbb N}2^{-|e|(\frac{1}{2}+\delta  )}n^{-(1+\delta )}
\sum_{\tiny \begin{array}{c}J{\in \cal D}_{2M}^{c}\end{array}}\sum_{I\in J_{e,n}}
|\langle f,\psi_{I}\rangle | |\langle g,\psi_{J}\rangle | .
\end{equation}

Since as said, we consider that $I$ and $J$ run over finite families of intervals, 
we can define $\tilde{f}=\sum_{I}|\langle f,\psi_{I}\rangle |\psi_{I}$ and 
$\tilde{g}=\sum_{J}|\langle g,\psi_{J}\rangle |\psi_{J}$, so that $\langle \tilde{f},\psi_I\rangle = |\langle f,\psi_{I}\rangle |$ 
and similarly for $\tilde{g}$. 
Then, for any interval $Q \subset \mathbb R$ we have
$$
\| {\mathcal P}_{Q}(\tilde{f})\|_{L^2(\mathbb R)}^2
\leq \sum_{I\subset Q}|\langle f, \psi_{I}\rangle |^2
=\| {\mathcal P}_{Q}(f)\|_{L^2(\mathbb R)}^2 ,
$$
where ${\mathcal P}_{Q}(f)=\sum_{I\subset Q}\langle f, \psi_{I}\rangle \psi_{I}$ is the classical projection operator defined in \eqref{regularproj}. Therefore 
$\| \tilde{f}\|_{\BMO(\mathbb R)}\leq \| f\|_{\BMO(\mathbb R)}$ and, by a duality argument, we also have 
$\| \tilde{g}\|_{H^1(\mathbb R)}\leq \| g\|_{H^1(\mathbb R)}$.

With this, we get for the inner sums in \eqref{compactonCMO}:
\begin{align*}
\sum_{\tiny \begin{array}{c}J{\in \cal D}_{2M}^{c}\end{array}}&\sum_{I\in J_{e,n}}
|\langle f,\psi_{I}\rangle ||\langle g,\psi_{J}\rangle |
\leq \sum_{\tiny \begin{array}{c}J\in {\mathcal D}\end{array}}\sum_{I\in J_{e,n}}
|\langle f,\psi_{I}\rangle ||\langle g,\psi_{J}\rangle |
\\
&=\sum_{\tiny \begin{array}{c}J\in {\mathcal D}\end{array}}\sum_{I\in J_{e,n}}
\langle \tilde{f},\psi_{I}\rangle \langle \tilde{g},\psi_{J}\rangle
=\Big\langle \tilde{g}, \sum_{J\in {\mathcal D}} \sum_{\tiny \begin{array}{c}I\! \in \! J_{e,n}\end{array}}
\langle \tilde{f},\psi_{I}\rangle \psi_{J}\Big\rangle
\\
&=\big\langle \tilde{g}, T_{e,n}(\tilde{f})\big\rangle
\leq \| \tilde{g}\|_{H^{1}(\mathbb R)}\| T_{e,n}(\tilde{f})\|_{\BMO(\mathbb R)}
\\
&\leq 2^{\frac{|e|}{2}}(\log(n+1)+|e|+1)^{\frac{1}{2}}\| \tilde{f}\|_{\BMO(\mathbb R)}\| \tilde{g}\|_{H^1(\mathbb R)}, 
\end{align*}
where the last inequality is due to Proposition \ref{modifiedmartingaletransform}. 
Notice also that $\log(n+1)\leq 2\log n\leq 2\delta^{-1}n^{\delta }$. 

This way,  (\ref{compactonCMO}) can be bounded by a constant times
\begin{align*}
\epsilon \sum_{e\in \mathbb Z}\sum_{n\in \mathbb N}
&2^{-|e|(\frac{1}{2}+\delta )}n^{-(1+\delta )}
2^{\frac{|e|}{2}}(n^{\delta }+|e|+1)^{\frac{1}{2}}\| f\|_{\BMO(\mathbb R)}\| g\|_{H^1(\mathbb R)}
\\
&\lesssim \epsilon \sum_{e\in \mathbb Z}2^{-|e|\delta }|e|^{\frac{1}{2}}\sum_{n\geq 1}
n^{-(1+\delta -\frac{\delta}{2})}\| f\|_{\BMO(\mathbb R)}\| g\|_{H^1(\mathbb R)}
\\
&\lesssim \epsilon \| f\|_{\BMO(\mathbb R)}\| g\|_{H^1(\mathbb R)}.
\end{align*}

In the remaining cases, we will not use the smallness of $F$. Instead, we will use 
the particular geometrical disposition of the intervals $I$ and $J$, which make either their eccentricity or their relative distance very extreme. 
We recall that the intervals  $I$ and $J$ in the sum (\ref{moduloin2}) satisfy $|I|=2^{e}|J|$ and 
$n\leq \rdist(I,J)<n+1$. 

2) We deal first with the case when 
$I\in {\cal D}_{M}$, that is, when  
$2^{-M}\leq |I|\leq 2^{M}$ and $\rdist(I,\mathbb B_{2^{M}})\leq M$.
Notice that, since $F$ is bounded, we can estimate $F(I_{i})+\epsilon \lesssim 1$. 

Since $J\in {\mathcal D}_{2M}^{c}$, we separate the study into three cases: $|J|>2^{2M}$, $|J|<2^{-2M}$
and $2^{-2M}\leq |J|\leq 2^{2M}$ with $\rdist(J,\mathbb B_{2^{2M}})>2M$.

2.1) In the case $|J|>2^{2M}$, since 
$2^{e}|J|=|I|\leq 2^{M}$, we have $2^{e}\leq 2^{M}|J|^{-1}\leq 2^{-M}$, that is, 
$e\leq -M$. 
Therefore, the calculations developed in case 1) allow to bound the corresponding terms in (\ref{moduloin2}) by 
\begin{align*}
&\sum_{\tiny \begin{array}{c}e\leq -M\end{array}}
\sum_{n\geq 1}
2^{-|e|(\frac{1}{2}+\delta )}n^{-(1+\delta )}
\sum_{J\in {\cal D}_{2M}^{c}
} \sum_{I\in J_{e,n}}|\langle f,\psi_{I}\rangle ||\langle g,\psi_{J}\rangle |
\\
&\leq \hspace{-.3cm} \sum_{\tiny \begin{array}{c}e\leq -M\end{array}}
\sum_{n\geq 1}
2^{-|e|(\frac{1}{2}+\delta )}n^{-(1+\delta )}
 2^{\frac{|e|}{2}}(n^{\delta}+|e|+1)^{\frac{1}{2}}\| f\|_{\BMO(\mathbb R)}\| g\|_{H^1(\mathbb R)}
\\
&\lesssim \Big(\sum_{\tiny \begin{array}{c}e\leq -M\end{array}}
2^{-|e|\delta }|e|^{\frac{1}{2}}\sum_{n\geq 1}
n^{-(1+\frac{\delta}{2})}\Big)\| f\|_{\BMO(\mathbb R)}\| g\|_{H^1(\mathbb R)}
\\
&
\lesssim \epsilon \| f\|_{\BMO(\mathbb R)}\| g\|_{H^1(\mathbb R)}
\end{align*}
by the choice of $M$. This finishes this case.

2.2) The case $|J|<2^{-2M}$ is symmetrical and amounts to changing $e\leq -M$ by $e\geq M$
in the previous case. 

2.3) In the case when $2^{-2M}\leq |J|\leq 2^{2M}$  and $\rdist(J,\mathbb B_{2^{2M}})\geq 2M$, we have that  
$|J|=2^{k}$ with $-2M\leq k\leq 2M$ and $|c(J)|\geq (2M-1)2^{2M}$.  
Since $I\in {\mathcal D_{M}}$, we also have 
\begin{align*}
M&\geq \rdist(I,\mathbb B_{2^{M}})=2^{-M}\diam (I\cup \mathbb B_{2^{M}})
\\
&\geq 2^{-M}(2^{M-1}+|I|/2+|c(I)|)
\geq 2^{-M}(2^{M-1}+|c(I)|)
\end{align*}
and then, $|c(I)|\leq (M-1/2)2^{M}$. This implies 
\begin{align*}
|c(I)-c(J)|&\geq |c(J)|-|c(I)|
\\
&\geq (2M-1)2^{2M}-(M-1/2)2^{M}\geq M2^{2M}.
\end{align*} 
This way, since $\max(|I|,|J|)\leq 2^{2M}$, we get
$$
n+1> \rdist(I,J)=\frac{\diam(I\cup J)}{\max(|I|,|J|)}\geq \frac{|c(I)-c(J)|}{\max(|I|,|J|)}
$$
$$
\geq 2^{-2M}M2^{2M}=M .
$$

Therefore, as in previous case, we bound the relevant terms in (\ref{moduloin2}) by a constant times
\begin{align*}
\sum_{\tiny \begin{array}{c}e\in \mathbb Z\end{array}}
&\sum_{n\geq M-1}
2^{-|e|(\frac{1}{2}+\delta )}n^{-(1+\delta )}
\sum_{J\in {\cal D}_{2M}^{c}} \sum_{I\in J_{e,n}}|\langle f,\psi_{I}\rangle ||\langle g,\psi_{J}\rangle |
\\
&\lesssim \Big(\sum_{\tiny \begin{array}{c}e\in \mathbb Z\end{array}}
2^{-|e|\delta }|e|^{\frac{1}{2}}\sum_{n\geq M-1}
n^{-(1+\frac{\delta }{2})}\Big)\| f\|_{\BMO(\mathbb R)}\| g\|_{H^1(\mathbb R)}
\\
&\lesssim M^{-\frac{\delta }{2}} \| f\|_{\BMO(\mathbb R)}\| g\|_{H^1(\mathbb R)}
<\epsilon \| f\|_{\BMO(\mathbb R)}\| g\|_{H^1(\mathbb R)}
\end{align*}
again by the choice of $M$.

3) Now, we deal with the case when $\langle I, J\rangle \in {\mathcal D}_{M}$, that is, when 
$2^{-M}\leq |\langle I, J\rangle |\leq 2^{M}$ and $\rdist (\langle I,J\rangle ,\mathbb B_{2^{M}})\leq M$. Both 
inequalities imply that $2^{-M}\leq \diam(I\cup J)\leq 2^{M}$ and $|c(\langle I, J\rangle )|\leq M2^{M}$.

Moreover, we have that $c(\langle I, J\rangle )=1/2\big(c(I)+c(J)+\alpha (|I|-|J|)\big)$ with $\alpha \in [-1,1]$. Then, 
\begin{align}\label{contra}
|c(I)+c(J)|&\leq 2|c(\langle I, J\rangle )|+||I|-|J||
\\
\nonumber
&\leq 2M2^{M}+|\langle I, J\rangle |
\leq (2M+1)2^{M}
\end{align}

3.1) When $|J|> 2^{2M}$ we have 
that $|\langle I, J\rangle|\geq |J|>2^{2M}$ implies $\langle I, J\rangle \notin {\cal D}_{M}$ and so, we do not
need to consider this case. 

3.2) When $2^{-2M}\leq |J|\leq 2^{2M}$ with $\rdist(J,\mathbb B_{2^{2M}})\geq 2M$, we have that
$|c(J)|>(2M-1)2^{2M}>M2^{M}$ . 

If $\sign \, c(I)=-\sign \, c(J)$ we have 
\begin{align*}
|\langle I, J\rangle|&=\diam(I\cup J)\geq |c(I)-c(J)|
\\
&=|c(I)|+|c(J)|>|c(J)|
>M2^{M},
\end{align*}
which is contradictory with $\langle I, J\rangle \in {\mathcal D}_{M}$. 

Otherwise, if $\sign \, c(I)=\sign \, c(J)$ we have 
$$
|c(I)+c(J)|= |c(I)|+|c(J)|>M2^{M}, 
$$
which is now contradictory with \eqref{contra}. 

So, we do not need to consider this case either.

3.3) The remaining case is when $|J|< 2^{-2M}$.
If $e\geq 0$ then,
$$
n+1> |I|^{-1}\diam(I\cup J)=2^{-e}|J|^{-1}|\langle I, J\rangle |\geq 2^{-e}2^{2M}2^{-M}=2^{M-e} .
$$

Meanwhile, if $e\leq 0$ we have
$$
n+1> \rdist(I,J)=|J|^{-1}\diam(I\cup J)\geq 2^{2M}2^{-M}=2^{M} .
$$

Therefore, we bound the relevant part of  (\ref{moduloin2}) by a constant times
\begin{align*}
&\sum_{\tiny \begin{array}{c}e\geq 0\end{array}}
\sum_{n\geq \max(2^{M-e}-1,1)}
2^{-|e|(\frac{1}{2}+\delta )}n^{-(1+\delta )}
\sum_{J\in {\cal D}_{2M}^{c}} \sum_{I\in J_{e,n}}|\langle f,\psi_{I}\rangle ||\langle g,\psi_{J}\rangle |
\\
&\hskip40pt +\sum_{\tiny \begin{array}{c}e\leq 0\end{array}}
\sum_{n\geq 2^{M}-1}
2^{-|e|(\frac{1}{2}+\delta )}n^{-(1+\delta )}
\sum_{J\in {\cal D}_{2M}^{c}} \sum_{I\in J_{e,n}}|\langle f,\psi_{I}\rangle ||\langle g,\psi_{J}\rangle |
\\
&\leq \Big(\hspace{-.1cm}\sum_{\tiny \begin{array}{c}0\leq e\leq M-1\end{array}}\hspace{-.3cm}
2^{-|e|\delta }|e|^{\frac{1}{2}}\hspace{-.3cm}\sum_{n\geq 2^{M-e}-1}\hspace{-.3cm}
n^{-(1+\frac{\delta }{2})}
+\sum_{\tiny \begin{array}{c}M\leq e\end{array}}\hspace{-.3cm}
2^{-|e|\delta }|e|^{\frac{1}{2}}\sum_{n\geq 1}
n^{-(1+\frac{\delta }{2})}
\\
&\hskip40pt + \sum_{\tiny \begin{array}{c}e\leq 0\end{array}}
2^{-|e|\delta }|e|^{\frac{1}{2}}\sum_{n\geq 2^{M-1}}n^{-(1+\frac{\delta }{2})}
\Big)\| f\|_{\BMO(\mathbb R)}\| g\|_{H^1(\mathbb R)}
\\
&\lesssim \Big( \hspace{-.5cm}\sum_{\tiny \begin{array}{c}0\leq e\leq M-1\end{array}}\hspace{-.5cm}
2^{-e\delta }|e|^{\frac{1}{2}}2^{-(M-e)\frac{\delta }{2}}
+\hspace{-.1cm}\sum_{\tiny \begin{array}{c}M\leq e\end{array}}\hspace{-.2cm}
2^{-e\delta }|e|^{\frac{1}{2}}+2^{-M\frac{\delta }{2}}\Big)\| f\|_{\BMO(\mathbb R)}\| g\|_{H^1(\mathbb R)}
\\
&\lesssim \Big( 2^{-M\frac{\delta }{2}}M^{\frac{3}{2}}+\hspace{-.1cm}\sum_{\tiny \begin{array}{c}M\leq e\end{array}}\hspace{-.2cm}
2^{-e\delta }|e|^{\frac{1}{2}}+2^{-M\frac{\delta }{2}}\Big)\| f\|_{\BMO(\mathbb R)}\| g\|_{H^1(\mathbb R)}
\\
&\lesssim \epsilon \| f\|_{\BMO(\mathbb R)}\| g\|_{H^1(\mathbb R)}
\end{align*}
by the choice of $M$.
 
6) We deal now with the case $\lambda_{2}K_{min}\in {\cal D}_{M}$, that is, 
$2^{-M}\leq |\lambda_{2}K_{min}|\leq 2^{M}$ and $\rdist (\lambda_{2}K_{min} ,\mathbb B_{2^{M}})\leq M$.

6.1) When $|J|> 2^{2M}$, we have two cases. Whenever $e>0$
then, $K_{min}=J$ and so, 
$|\lambda_{2}J|\geq |J|\geq 2^{2M}$ which is contradictory with $\lambda_{2}J\in {\cal I}_{M}$.

On the other hand, when $e\leq 0$ we have $K_{min}=I$ and  $|I|\leq |\lambda_{2}I|\leq 2^{M}$. 
Then, $2^{e}=|I|/|J|\leq 2^{-M}$ and so, $e\leq -M$. Therefore, the arguments of the case 2.1) show that the corresponding part 
of (\ref{moduloin2}) can be bounded by $\epsilon  \| f\|_{\BMO(\mathbb R)}\| g\|_{H^1(\mathbb R)}$.

6.2) When $2^{-2M}\leq |J|\leq 2^{2M}$ with $\rdist(J,\mathbb B_{2^{2M}})\geq 2M$, we have  
$|c(J)|>(2M-1)2^{2M}$. Now, we divide into the same two cases. 

When $e\geq 0$, we know $K_{min}=J$ and so, 
$2^{-M}\leq |\lambda_{2}J|\leq 2^{M}$ with $\rdist(\lambda_{2}J,\mathbb B_{2^{M}})\leq M$. This leads to  
the following contradiction:
$$
M\geq \rdist(\lambda_{2}J,\mathbb B_{2^{M}})
> 
2^{-M}|c(J)|
\geq (2M-1)2^{M}.
$$
On the other hand, when $e\leq0 $ we have $K_{min}=I$ and then, 
$|c(I)|=|c(\lambda_{2}I)|\leq (M-1)2^{M}$. This implies $|c(I)-c(J)|> M2^{2M}$ and
$$
n+1> \rdist(I,J)
\geq \frac{|c(I)-c(J)|}{|J|}
\geq M .
$$
Then, the same arguments developed in the case 2.3) provide the bound $\epsilon \| f\|_{\BMO(\mathbb R)}\| g\|_{H^1(\mathbb R)}$.

6.3) When $|J|< 2^{-2M}$, we proceed as follows. 
If $e\geq 0$, we have $K_{\min}=J$ and so, 
$|\lambda_{2}J|\geq 2^{-M}$. This implies $\lambda_{2}\geq 2^{-M}|J|^{-1}> 2^{M}$ and
$$
2^{M}<\lambda_{2}=\Big(\frac{\diam(I\cup J)}{|J|}\Big)^{\theta}
= \Big(\frac{|I|}{|J|}\Big)^{\theta }\rdist(I,J)^{\theta}
<2^{e\theta }(n+1)^{\theta}
$$
Meanwhile, if $e\leq 0$, we have $K_{\min}=I$ and then, $|\lambda_{2}I|\geq 2^{-M}$. We also have 
$|I|\leq |J|\leq 2^{-2M}$. 
All this implies $\lambda_{2}\geq 2^{-M}|I|^{-1}> 2^{M}$ and
$$
2^{M}<\lambda_{2}
= \Big(\frac{\diam(I\cup J)}{|I|}\Big)^{\theta}
=\Big(\frac{|J|}{|I|}\Big)^{\theta }\rdist(I,J)^{\theta}
<2^{-e\theta }(n+1)^{\theta }
$$

Then, since $\theta <1$, we get $n+1>2^{-|e|}2^{\frac{M}{\theta }}>2^{-|e|}2^{M}$ and so,   
previous arguments show that  the relevant part 
of (\ref{moduloin2}) can be bounded by
\begin{align*}
\Big(\sum_{\tiny \begin{array}{c}e\in \mathbb Z\end{array}}
&2^{-|e|\delta }|e|^{\frac{1}{2}}\sum_{n\geq 2^{-|e|}2^{M}-1}
n^{-(1+\frac{\delta }{2})}\Big)\| f\|_{\BMO(\mathbb R)}\| g\|_{H^1(\mathbb R)}
\\
&\lesssim \Big(\sum_{\tiny \begin{array}{c}e\in \mathbb Z\end{array}}
2^{-|e|\delta }|e|^{\frac{1}{2}}2^{|e|\frac{\delta }{2}}2^{-M\frac{\delta }{2}}
\Big)\| f\|_{\BMO(\mathbb R)}\| g\|_{H^1(\mathbb R)}
\\
&
\lesssim 2^{-M\frac{\delta }{2}}\| f\|_{\BMO(\mathbb R)}\| g\|_{H^1(\mathbb R)}
\leq \epsilon \| f\|_{\BMO(\mathbb R)}\| g\|_{H^1(\mathbb R)}.
\end{align*}

Finally, we note that similar type of calculations are enough to deal with the two remaining cases 4) and 5). This completely finishes the proof of Theorem \ref{BMObounds}.

\subsection{The general case}
For the proof of compactness 
in
the general case, that is, without the special cancellation conditions,  
we follow the same scheme as in the proof of the classical $T(1)$ theorem. 
When $b_1=T(1)$ and $b_2=T^{*}(1)$ are arbitrary functions in $\CMO(\mathbb R)$, we construct compact paraproducts $T_{b}$ associated with compact 
Calder\'on-Zymund kernels such that
$T_{b_1}(1)=b_1$, $T_{b_1}^{*}(1)=0$.
Then, the operator 
$$
\tilde{T}=T-T_{b_1}-T_{b_2}^{*}
$$ 
satisfies the hypotheses of Theorem \ref{BMObounds} and so, $\tilde{T}$ is compact from 
$\BMO(\mathbb R)$ to $\CMO(\mathbb R)$. Finally, since the operators  $T_{b_1}$ and $T_{b_2}^{*}$ are compact 
from $L^{\infty }(\mathbb R)$ to $\CMO(\mathbb R)$
by construction, we deduce that  
the initial operator $T$ is also compact from $L^{\infty }(\mathbb R)$ to $\CMO(\mathbb R)$.

We remark that, as we will later see in full detail, the appropriate paraproducts are exactly the same ones as in the classical setting, with the only difference that the parameter functions $b_{i}$ belong to the space $\CMO(\mathbb R)$ instead of 
$\BMO(\mathbb R)$.

As in Proposition \ref{paraproducts1},  
we use a wavelet basis $(\psi_I)_{I\in {\mathcal D}}$ of $L^2(\mathbb R)$ and $H^{1}(\mathbb R)$ such that each $\psi_{I}$ is an 
$L^{2}$-normalized bump function supported and adapted to $I$ with constant $C$ and order $N$.

We now denote by $\phi$ a positive bump function supported and adapted to $[-1/2,1/2]$ with order $N$ and integral one. Then, we have that 
$0\leq \phi (x)\leq C(1+|x|)^{-N}$ and $|\phi'(x)|\leq C(1+|x|)^{-N}$. 
Let $(\phi_{I})_{I\in {\mathcal D}}$  be
the family of bump functions defined by $\phi_I={\mathcal T}_{c(I)}{\mathcal D}_{|I|}^{1}\phi$. Therefore, 
each 
$\phi_{I}$ is an $L^{1}$-normalized bump function adapted to $I$, that is, it satisfies
 $\phi_{I}(x)\leq C|I|^{-1}(1+|I|^{-1}|x-c(I)|)^{-N}$  and $|\phi_{I}'(x)|\leq C|I|^{-2}(1+|I|^{-1}|x-c(I)|)^{-N}$.

\begin{proposition}\label{paraproducts1}
Given $b \in \CMO(\R)$, we define the operator
$$
T_b(f)=\sum_{I\in {\mathcal D}} \langle b, \psi_{I}\rangle \langle f, \phi_{I} \rangle \psi_{I},
$$
where $\psi_I$ and $\phi_I$ are as described above.

Then, 
$T_b$ and $T_b^{*}$ are associated with a compact Calder\'on-Zygmund kernel, 
and they are both compact from $L^{\infty }(\mathbb R)$ to $\CMO(\mathbb R)$. 
Furthermore, 
$
\langle T_b(1),g\rangle =\langle b,g\rangle
$
and 
$
\langle T_b(f),1\rangle =0
$, 
for all $f,g\in {\mathcal S}(\mathbb R)$.
\end{proposition}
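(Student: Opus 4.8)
The plan is to verify the four claims in the order stated, handling $T_b$ and $T_b^{*}$ together, since the kernel of $T_b^{*}$ is $K_b^{*}(t,x)=K_b(x,t)$ with $K_b(t,x)=\sum_{I\in\mathcal D}\langle b,\psi_I\rangle\,\psi_I(x)\,\phi_I(t)$. The two algebraic identities are formal: pairing the defining series of $T_b(f)$ against a Schwartz function $g$ and using $\int\phi_I=1$ gives $\langle T_b(1),g\rangle=\sum_I\langle b,\psi_I\rangle\langle\psi_I,g\rangle=\langle b,g\rangle$, the last equality being the reproducing formula for $g\in\mathcal S_N(\mathbb R)\subset H^1(\mathbb R)$ paired against $b\in\BMO(\mathbb R)=(H^1)^{*}$; pairing against $1$ and using $\int\psi_I=0$ gives $\langle T_b(f),1\rangle=0$, so in particular $T_b^{*}(1)=0$ as a distribution. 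Continuity of $T_b,T_b^{*}:\mathcal S_N(\mathbb R)\to\mathcal S_N'(\mathbb R)$ is immediate from these series, whose convergence for Schwartz data is controlled by the decay of $\langle f,\phi_I\rangle$ and of the bumps.

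For the kernel I would first carry out the standard Calder\'on-Zygmund paraproduct computation: the series $K_b$ converges and is bounded on compact subsets of $\mathbb R^{2}\setminus\Delta$, and for $2(|t-t'|+|x-x'|)<|t-x|$ the H\"older-$\delta$ smoothness of the chosen wavelet together with the bump decay of $\psi_I$ and $\phi_I$ yields a bound of the form
\[
|K_b(t,x)-K_b(t',x')|\;\lesssim\;\frac{(|t-t'|+|x-x'|)^{\delta}}{|t-x|^{1+\delta}}\;B(t,x),
\]
where $B(t,x)$ is a geometrically weighted sum of the quantities $|I|^{-1/2}|\langle b,\psi_I\rangle|$ over the dyadic intervals $I$ with $|I|\gtrsim|t-x|$ lying at bounded relative distance from the points $t,x$, the weights decaying like a fixed negative power of $|I|/|t-x|$ so that the sum converges. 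Since $|\langle b,\psi_I\rangle|\le\|b\|_{\BMO}\,|I|^{1/2}$ for every $I$, the function $B$ is bounded by $\|b\|_{\BMO}$. The crucial gain is that $B(t,x)$ vanishes in the three regimes required by Definition~\ref{prodCZ}: splitting the weighted sum at a large scale, the tail is made small by the geometric weight, while the remaining intervals have either $|I|<2^{-M}$ (hence $I\notin\mathcal D_M$) when $|t-x|$ is small, or $|I|>2^{M}$ when $|t-x|$ is large, or $|c(I)|$ much larger than $\max(|I|,2^{M})$ when $|t+x|$ is large and $|t-x|$ bounded --- in all three cases the relevant intervals are non-lagom, so by Lemma~\ref{lem:cmochar}(ii) (applied with $Q=I$) their contribution tends to $0$ as $M\to\infty$. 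Hence $B(t,x)\to0$ as $|t-x|\to0$, as $|t-x|\to\infty$ and as $|t+x|\to\infty$; by the $B$-formulation of compact Calder\'on-Zygmund kernels recalled after Definition~\ref{prodCZ}, $K_b$ and, by symmetry, $K_b^{*}$ are compact Calder\'on-Zygmund kernels. The integral representation of Definition~\ref{intrep} for Schwartz $f,g$ with disjoint compact supports follows by interchanging the absolutely convergent sum with the integrals, so $T_b$ is associated with $K_b$ and $T_b^{*}$ with $K_b^{*}$.

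For compactness from $L^{\infty}(\mathbb R)$ into $\CMO(\mathbb R)$, I would first record the two endpoint bounds $\|T_b\|_{L^{\infty}\to\BMO}+\|T_b^{*}\|_{L^{\infty}\to\BMO}\lesssim\|b\|_{\BMO}$. The first is classical: $\phi_I$ being $L^{1}$-normalized gives $|\langle f,\phi_I\rangle|\lesssim\|f\|_{\infty}$, whence $\|T_b f\|_{\BMO}^{2}=\sup_Q|Q|^{-1}\sum_{I\subset Q}|\langle b,\psi_I\rangle|^{2}|\langle f,\phi_I\rangle|^{2}\lesssim\|f\|_{\infty}^{2}\|b\|_{\BMO}^{2}$. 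For the second, the point is that $L^{\infty}(\mathbb R)\subset\BMO(\mathbb R)$, so $\{|\langle g,\psi_I\rangle|^{2}\}_I$ is a Carleson sequence of constant $\lesssim\|g\|_{\infty}^{2}$; testing $T_b^{*}(g)=\sum_I\langle b,\psi_I\rangle\langle g,\psi_I\rangle\phi_I$ over a cube $Q_0$, the part of the sum over $I$ of size $\gtrsim|Q_0|$ is controlled modulo its mean on $Q_0$ by the smooth variation of $\phi_I$ and geometric summation, and the part over the remaining $I$ by a Cauchy--Schwarz pairing of the two Carleson sequences $\{\langle b,\psi_I\rangle\}_I$ and $\{\langle g,\psi_I\rangle\}_I$, giving $\|T_b^{*}g\|_{\BMO}\lesssim\|b\|_{\BMO}\|g\|_{\infty}$. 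Compactness now follows from a clean finite-rank approximation: writing $b=P_M b+P_M^{\perp}b$ in the wavelet basis, we get $T_b=T_{P_M b}+T_{P_M^{\perp}b}$ and $T_b^{*}=T_{P_M b}^{*}+T_{P_M^{\perp}b}^{*}$; since $\mathcal D_M$ is finite, $T_{P_M b}$ and $T_{P_M b}^{*}$ have finite rank, with ranges inside the spans of $\{\psi_I\}_{I\in\mathcal D_M}$ and of $\{\phi_I\}_{I\in\mathcal D_M}$ respectively --- subspaces of $C_0(\mathbb R)\subset\CMO(\mathbb R)$ --- and applying the endpoint bounds to $P_M^{\perp}b$ in place of $b$ yields $\|T_{P_M^{\perp}b}\|_{L^{\infty}\to\BMO}+\|T_{P_M^{\perp}b}^{*}\|_{L^{\infty}\to\BMO}\lesssim\|P_M^{\perp}b\|_{\BMO}\to0$ as $M\to\infty$, the last limit being exactly Lemma~\ref{lem:cmochar}(ii) for $b\in\CMO(\mathbb R)$. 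Thus $T_b$ and $T_b^{*}$ are operator-norm limits of finite-rank operators mapping $L^{\infty}(\mathbb R)$ into the closed subspace $\CMO(\mathbb R)$, hence compact from $L^{\infty}(\mathbb R)$ into $\CMO(\mathbb R)$.

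The step I expect to be the main obstacle is the kernel estimate: not the standard paraproduct smoothness bound, which is routine, but the bookkeeping that pins down which dyadic intervals genuinely contribute to $K_b(t,x)-K_b(t',x')$ and shows that, after the geometric weighting, in each of the three limiting regimes ($|t-x|\to0$, $|t-x|\to\infty$, $|t+x|\to\infty$) the mass is carried by non-lagom intervals with constants uniform in $(t,x)$ --- this is what converts the single tail condition defining $\CMO(\mathbb R)$ into the three separate vanishing properties required of a compact Calder\'on-Zygmund kernel. The $L^{\infty}\to\BMO$ bound for $T_b^{*}$, via the Carleson-sequence pairing together with per-cube mean cancellation, is the second most delicate point, but is essentially classical.
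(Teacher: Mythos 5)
Your argument is correct, and the underlying decomposition coincides with the paper's, but you organize the conclusion differently and take a genuinely different route for $T_b^{*}$. For the kernel claim you re-derive a compact Calder\'on--Zygmund estimate for $K_b$ via the $B$-formulation; the paper simply cites \cite{V} for this and for the integral representation. For compactness of $T_b$, note that $P_M T_b = T_{P_M b}$, so your splitting $T_b = T_{P_M b}+T_{P_M^{\perp}b}$ is exactly the lagom splitting $P_M T_b + P_M^{\perp} T_b$; the paper records the identity $P_M^{\perp}T_b = T_{P_M^{\perp}b}$ and then applies Theorem~\ref{charofcompact}, while you phrase the same fact as a finite-rank approximation whose finite-dimensional pieces land in $C_0(\mathbb R)\subset\CMO(\mathbb R)$ (correctly relying on the compactly supported wavelets $\psi_I$ and bumps $\phi_I$, which makes the landing in $\CMO(\mathbb R)$ transparent). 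The real divergence is in $T_b^{*}$: the paper observes that $T_b^{*}$ does \emph{not} satisfy the analogue of the lagom identity, so it instead proves that $T_b$ is compact from $H^1(\mathbb R)$ to $L^1(\mathbb R)$ using the classical $H^1\to L^1$ bound $\|T_b\|_{H^1\to L^1}\lesssim\|b\|_{\BMO}$, passes to $T_b^{*}$ by duality, and then upgrades the target from $\BMO(\mathbb R)$ to $\CMO(\mathbb R)$. You treat $T_b^{*}$ symmetrically with $T_b$, which requires the $L^\infty\to\BMO$ estimate for the dual paraproduct $T_{P_M^{\perp}b}^{*}$ directly; your Carleson-sequence argument for this is only a sketch, and it is the less elementary of the two $L^\infty\to\BMO$ bounds since it needs a per-cube cancellation for the large-scale part of the sum. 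Each route has its merit: the paper's avoids proving that bound by reusing a single classical $H^1\to L^1$ estimate, while yours handles the two paraproducts in one uniform scheme and makes the finite-rank structure explicit.
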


\proof
In \cite{V} we showed that $T_b$ and $T_{b}^{*}$ belong to the class of operators for which the theory applies, that is, the integral representation of Definition \ref{intrep} holds with
operator kernel satisfying the Definition \ref{prodCZ} of a compact Calder\'on-Zygmund kernel.

For the proof of compactness of $T_{b}$, it is sufficient to verify that
$\langle P_{M}^{\perp}(T_{b})(f), g\rangle $
tends to zero for all $f \in L^\infty(\R)$
and $g \in {\mathcal S}(\mathbb R)$ uniformly in the unit ball of $L^\infty (\mathbb R)$ and $H^1(\mathbb R)$ respectively.  
Since $g\in H^{1}(\mathbb R)$, 
we have
$
P_{M}^{\perp }(g)=\sum_{I\in {\mathcal D}_{M}^{c}}\langle g, \psi_{I}\rangle \psi_{I}
$.

We note that, by the classical $T(1)$ theory, we already now that the operator is bounded from 
$L^{\infty }(\mathbb R)$ to $\CMO(\mathbb R)$ and so, the expression $T_{b}(f)$ is completely meaningful.

Moreover, 
since $(\psi_{I})_{I\in \cal D}$ can be chosen so that it is also a wavelet basis on $\CMO(\mathbb R)$ (see the comment in Lemma \ref{lem:cmochar}), we have 
$P_{M}^{\perp }(b)\in \BMO(\mathbb R)$ and
$
P_{M}^{\perp }(b)=\sum_{I\in {\mathcal D}_{M}^{c}}\langle b, \psi_{I}\rangle \psi_{I}
$. 
With this,
\begin{align*}
\langle P_{M}^{\perp}(T_b(f)),g\rangle &=\langle T_b(f),P_{M}^{\perp}(g)\rangle 
=\sum_{I\in {\mathcal D}} \langle b, \psi_{I}\rangle \langle f, \phi_{I}\rangle 
\langle P_{M}^{\perp}(g),\psi_{I}\rangle
\\
&=\sum_{I\in {\mathcal D}_{M}^{c}} \langle b, \psi_{I}\rangle \langle f, \phi_{I}\rangle \langle g,\psi_{I}\rangle
=\sum_{I\in {\mathcal D}} \langle P_{M}^{\perp}(b), \psi_{I}\rangle \langle f, \phi_{I}\rangle \langle g,\psi_{I}\rangle
\end{align*}
that is, 
\begin{equation}\label{projofparaproduct}
\langle P_{M}^{\perp}(T_b(f)),g\rangle =\langle T_{P_{M}^{\perp}(b)}(f),g\rangle .
\end{equation}

Then, boundedness of $T_{P_{M}^{\perp}(b)}$ from $L^\infty (\mathbb R)$ to $\BMO(\mathbb R)$ implies 
$$
|\langle P_{M}^{\perp}(T_{b})(f), g\rangle |
\lesssim \| P_{M}^{\perp}(b)\|_{\BMO(\mathbb R)}\|f\|_{L^{\infty }(\mathbb R)}\|g\|_{H^{1}(\mathbb R)}.
$$
Since $\lim_{M\rightarrow \infty }\| P_{M}^{\perp}(b)\|_{\BMO(\mathbb R)}=0$, 
the inequality above finally proves that $T_b(f)$
is compact from
$L^\infty(\mathbb R)$ into $\CMO(\mathbb R)$.

The proof that 
$
{T_b}^{*}
$
is compact from $L^\infty(\mathbb R)$ to
$\CMO(\mathbb R)$ is slightly different since ${T_b}^{*}$ does not satisfy the analogue to 
\eqref{projofparaproduct}. Thus, we prove instead the dual compactness for $T_{b}$. 
By \eqref{projofparaproduct} and boundedness of 
$T_{b}$ from $H^{1}(\mathbb R)$ to $L^{1}(\mathbb R)$, we have
$$
|\langle P_{M}^{\perp}(T_{b})(f), g\rangle |
= |\langle (T_{P_{M}^{\perp}(b)}(f), g\rangle |
\lesssim \| P_{M}^{\perp}(b)\|_{\BMO(\mathbb R)}\|f\|_{H^{1}(\R)}\|g\|_{L^{\infty}(\mathbb R)}.
$$
This proves that $T_{b}$ is compact from $H^{1}(\mathbb R)$ to $L^{1}(\mathbb R)$ and so, by duality 
$T_{b}^{*}$ is compact from $L^{\infty }(\mathbb R)$ to $\BMO(\mathbb R)$. But this obviously implies that 
$\lim_{M\rightarrow \infty }\| P_{M}^{\perp}(T_{b}^{*})(f)\|_{\BMO(\mathbb R)}=0$ 
uniformly in the unit ball of $H^{1}(\mathbb R)$ and thus, the range of $T_{b}^{*}$ is actually in 
$\CMO(\mathbb R)$.

\section{Compactness of a perturbation of the Cauchy transform}
In this section we apply our main theorem to demonstrate the
compactness of a certain perturbation of the Cauchy transform, for
Lipschitz paths in the complex plane satisfying a $\CMO$-condition.
The example illustrates with special clarity the scope and 
methodology of the new theory since the computations involved 
are essentially variations of the well known calculations pertaining 
to the study of the Cauchy transform in the classical $T(1)$-theory.
We note that a $T(b)$-theorem for
compactness in several dimensions is already under development, and it
could be of further use in the compactness theory of Cauchy-type
operators.

We start by giving the following definition.
\begin{definition}
We denote by $L^\infty_\CMO(\R)$ the closed subspace $L^\infty(\R) \cap \CMO(\R)$ of $L^\infty(\mathbb R)$.  
\end{definition}

Let $A: \R \to \R$ be an absolutely continuous function such that $A' \in L^\infty_\CMO(\R)$, and let $\Gamma \subset \C$ be the curve given by the parametrization $z(t) = t + iA(t)$, $t \in \R$. Given points $z(x), z(t) \in \Gamma$, we denote by $\sigma_{z(x),z(t)}, \tau_{z(x),z(t)} \in \Gamma$ the points 
\begin{equation*}
\sigma_{z(x),z(t)} = z\left( x-\frac{1}{4}(x-t)\right ), \quad \tau_{z(x),z(t)} = z\left( x-\frac{3}{4}(x-t)\right),
\end{equation*}
lying in between $z(x)$ and $z(t)$ with respect to the parametrization of $\Gamma$. 

The application we present concerns a perturbation of the Cauchy transform associated with $\Gamma$. Namely, define $T_\Gamma : L^p(\Gamma) \to L^p(\Gamma)$ by
\begin{equation*}
T_\Gamma f(z) = 2\int_\Gamma \frac{f(w)}{z-w + 2\overline{(\sigma_{z,w}-\tau_{z,w})}} \, ds(w), \quad f \in L^p(\Gamma), \, z \in \Gamma,
\end{equation*}
where $ds$ denotes the arc length measure on $\Gamma$. Note that if $z = z(x)$ and $w = z(t)$, then
\begin{equation*}
\Re (z-w + 2\overline{(\sigma_{z,w}-\tau_{z,w})}) = 2 \Re(z-w),
\end{equation*}
and
\begin{multline} \label{Imparteq}
\Im (z-w + 2\overline{(\sigma_{z,w}-\tau_{z,w})}) = \\
A(x)-A(t)-2A(x-\frac{1}{4}(x-t)) + 2A(x-\frac{3}{4}(x-t))
\end{multline}
In analogy with the Hilbert transform, we also introduce the operator $H_\Gamma : L^p(\Gamma) \to L^p(\Gamma)$,
\begin{equation*}
H_\Gamma f(z) = \int_\Gamma \frac{f(w)}{\Re(z-w)} \, ds(w), \quad f \in L^p(\Gamma), \, z \in \Gamma.
\end{equation*}
One might surmise that there is a sufficient amount of cancellation in \eqref{Imparteq} to cause $T_\Gamma - H_\Gamma$ to be compact on $L^p(\Gamma)$. We will apply the results of this paper to prove exactly this when $\|A'\|_\infty$ is sufficiently small. 
\begin{proposition} \label{applprop}
Suppose that $A: \R \to \R$ is absolutely continuous, and that $A' \in L^\infty_\CMO(\R)$. Then, there exists an $\eta > 0$ such that $T_\Gamma - H_\Gamma$ is compact on $L^p(\Gamma)$, $1 < p < \infty$, whenever $\|A'\|_\infty < \eta$.
\end{proposition}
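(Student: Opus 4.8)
The plan is to transport $T_\Gamma-H_\Gamma$ to the real line and then verify the three hypotheses of Theorem \ref{Mainresult}. Let $Uf=f\circ z$; since $1\le(1+A'^2)^{1/2}\le(1+\|A'\|_\infty^2)^{1/2}$, the map $U$ is a Banach space isomorphism of $L^p(\Gamma)$ onto $L^p(\R)$. Writing $z=z(x)$, $w=z(t)$ and using that $\Re(z-w+2\overline{(\sigma_{z,w}-\tau_{z,w})})=2(x-t)$ together with \eqref{Imparteq}, a direct computation gives $U(T_\Gamma-H_\Gamma)U^{-1}=T_KM_m$, where $M_m$ is multiplication by $m(t)=(1+A'(t)^2)^{1/2}$ and $T_K$ is the principal value operator with kernel
$$K(x,t)=\frac{1}{(x-t)+\tfrac{i}{2}\Phi(x,t)}-\frac{1}{x-t}=\frac{-\tfrac{i}{2}\Phi(x,t)}{(x-t)\bigl((x-t)+\tfrac{i}{2}\Phi(x,t)\bigr)},$$
with $\Phi(x,t)=A(x)-A(t)-2A(x-\tfrac14(x-t))+2A(x-\tfrac34(x-t))$ the quantity from \eqref{Imparteq}. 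Since $M_m$ is a bounded invertible operator on every $L^p(\R)$, one has that $T_\Gamma-H_\Gamma$ is compact on $L^p(\Gamma)$ if and only if $T_K$ is compact on $L^p(\R)$, and $T_K$ is a priori bounded on $L^2(\R)$ and associated with the kernel $K$; so it remains to check that $K$ is a compact Calder\'on--Zygmund kernel, that $T_K$ satisfies the weak compactness condition, and that $T_K(1),T_K^{*}(1)\in\CMO(\R)$.

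The core of the argument is the kernel estimate, and this is where the hypothesis $A'\in\CMO$ and the smallness of $\|A'\|_\infty$ enter. Put $h=x-t$ and let $I$ be the interval with endpoints $t$ and $x$. Writing $\Phi(x,t)=\int_t^{t+h}A'-2\int_{t+h/4}^{t+3h/4}A'$, one observes that the combination is balanced: it is unchanged if $A'$ is replaced by $A'-c$ for any constant $c$. Hence $|\Phi(x,t)|\le2\int_I|A'-\langle A'\rangle_I|=2|I|\,\omega_I(A')$, where $\omega_I(A')=|I|^{-1}\int_I|A'-\langle A'\rangle_I|$; for $\|A'\|_\infty$ small this forces $|(x-t)+\tfrac{i}{2}\Phi(x,t)|\approx|h|$, so $|K(x,t)|\lesssim\omega_I(A')/|h|$, which in particular makes $K$ a standard kernel with small constant. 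For the smoothness, differencing in the variable $h$ (in which $K$ is genuinely differentiable) contributes a term of size $\lesssim\tfrac{\eta\,|\Phi(x,t)|}{|h|^{3}}\lesssim\tfrac{\eta}{|h|^{2}}\,\omega_I(A')$, where $\eta=|x-x'|+|t-t'|<|h|/2$, while differencing in $\Phi$ costs $\lesssim|h|^{-2}|\Phi(x,t)-\Phi(x',t')|$. The increments produced by perturbing $x$ or $t$ are again balanced, so on one hand $|\Phi(x,t)-\Phi(x',t')|\lesssim\eta\,\|A'\|_\infty$, and on the other hand trivially $|\Phi(x,t)-\Phi(x',t')|\le|\Phi(x,t)|+|\Phi(x',t')|\lesssim|h|\,\omega_{\widehat I}(A')$ for a fixed enlargement $\widehat I\supset I$ with $|\widehat I|\approx|h|$; geometric interpolation between these two bounds yields, for every $\delta\in(0,1)$,
$$|K(x,t)-K(x',t')|\lesssim\frac{(|x-x'|+|t-t'|)^{\delta}}{|x-t|^{1+\delta}}\,B(x,t),\qquad B(x,t):=\|A'\|_\infty^{\delta}\,\omega_{\widehat I}(A')^{1-\delta},$$
whenever $2(|x-x'|+|t-t'|)<|x-t|$. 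Since $A'\in\CMO(\R)$, Lemma \ref{lem:cmochar}(i) gives $\lim_{M\to\infty}\sup_{I\notin\mathcal{I}_M}\omega_I(A')=0$, so $B$ is bounded and tends to $0$ as $|x-t|\to0$, as $|x-t|\to\infty$, and as $|x+t|\to\infty$; by the equivalent description of compact Calder\'on--Zygmund kernels given after Definition \ref{prodCZ}, $T_K$ is associated with such a kernel. I expect this step to be the main obstacle: because $A'$ belongs only to $L^\infty_{\CMO}(\R)$ and is not H\"older continuous, the required kernel smoothness cannot be obtained by naive differentiation, and one must exploit the special balanced second-difference structure of $\Phi$ — the points $x-\tfrac14(x-t)$, $x-\tfrac34(x-t)$ and the coefficient $2$ are chosen precisely so that all the relevant increments carry zero total mass and can therefore be measured by oscillations of $A'$.

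The weak compactness condition is verified by the same family of estimates: splitting $\langle T_K\phi_I,\varphi_I\rangle$ into a local and a non-local part as in the classical verification of weak boundedness for the Cauchy integral, the size bound $|K(x,t)|\lesssim\omega_{I_{x,t}}(A')/|x-t|$, the smoothness bound above, and the smallness of $\|A'\|_\infty$ combine to give $|\langle T_K\phi_I,\varphi_I\rangle|\lesssim\sup\{\omega_J(A'):J\subset CI\}$, which tends to $0$ as $I$ leaves $\mathcal{I}_M$ with $M\to\infty$; choosing admissible $L,S,D$ accordingly produces \eqref{weakcompactnessformula}. Finally, for $T_K(1)$ and $T_K^{*}(1)$ one expands $K$ in the convergent Neumann series $K(x,t)=\sum_{k\ge1}(-\tfrac{i}{2})^{k}\Phi(x,t)^{k}/(x-t)^{k+1}$ — convergent because $|\Phi(x,t)|\le2\|A'\|_\infty|x-t|$ with $\|A'\|_\infty$ small — and treats each term as a higher-order Calder\'on-commutator-type operator whose symbol is built from the balanced difference $\Phi$. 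The linear term reduces, after an integration by parts, to a classical singular integral applied to $A'$, which maps $\CMO(\R)$ into $\CMO(\R)$; the higher terms are handled analogously, with operator norms $\lesssim C^{k}\|A'\|_\infty^{k-1}\|A'\|_{\CMO}$, so that the series converges in the relevant operator norm once $\|A'\|_\infty<\eta$ is small enough and $T_K(1),T_K^{*}(1)\in\CMO(\R)$ follows. With the three hypotheses established, Theorem \ref{Mainresult} shows $T_K$ is compact on $L^p(\R)$ for $1<p<\infty$, and conjugating back by $U$ and $M_m$ gives the compactness of $T_\Gamma-H_\Gamma$ on $L^p(\Gamma)$.
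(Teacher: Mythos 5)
Your overall scheme — reduce to the real line and verify the three hypotheses of Theorem \ref{Mainresult} — is the same as the paper's, but you organize the work differently. The paper expands $T_\Gamma-H_\Gamma$ at the \emph{operator} level into the Neumann series $\sum_{n\geq 1}(-i/2)^nT_n$ (equation \eqref{gammatoReq}), verifies that each $T_n$ is associated with a compact CZ kernel and satisfies weak compactness, inductively computes $T_n(1)$ and shows $T_n(1)\in\CMO$, establishes $\|T_n\|_{L^p\to L^p}\leq C^n\|A'\|_\infty^n$, and finally sums the compact operators. You instead keep the \emph{single} kernel $K=\frac{1}{(x-t)+i\Phi/2}-\frac{1}{x-t}$ and prove the compact CZ estimate for it directly, using the balanced second-difference structure $|\Phi(x,t)|\lesssim|x-t|\,\omega_{I_{x,t}}(A')$ and a geometric interpolation between the Lipschitz bound $|\Phi(x,t)-\Phi(x',t')|\lesssim\eta\|A'\|_\infty$ and the oscillation bound. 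This is a genuinely cleaner route for the kernel (it avoids tracking constants $\lesssim2^n\|A'\|_\BMO^n$ term by term and then summing a geometric series), and the interpolated $B(x,t)=\|A'\|_\infty^\delta\omega_{\widehat I}(A')^{1-\delta}$ does satisfy the three vanishing limits by Lemma \ref{lem:cmochar}(i) once one checks, as you implicitly do, that $|\Phi|\lesssim|x-t|$ (i.e.\ $\|A'\|_\infty$ small) so that the derivative bound $|\partial_hG|\lesssim|\phi|/|h|^3$ is available. For the $T(1)$ and $T^*(1)$ membership in $\CMO$ you ultimately fall back on the same Neumann series and inductive argument as the paper, so that portion is not really different.

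One point in your write-up is incorrect as stated and deserves a warning. In the weak compactness step you claim $|\langle T_K\phi_I,\varphi_I\rangle|\lesssim\sup\{\omega_J(A'):J\subset CI\}$ and assert that this quantity tends to $0$ as $I$ leaves $\mathcal{I}_M$. That supremum does \emph{not} tend to $0$: if, say, $|I|\to\infty$ with $c(I)=0$, then $J\subset CI$ includes unit-scale intervals near the origin, for which $\omega_J(A')$ stays bounded away from zero unless $A'$ is constant. The bound has to be formulated with the $\varepsilon$-truncation that is built into Definition \ref{WB} and that the paper implements in \eqref{WCCineq}: after rescaling to the unit interval one truncates the integration to $\{1/M<|x-t|<M\}\cap\{|x+t|<M\}$ at the cost of an $O(\varepsilon)$ tail (controlled by the Schwartz decay of the rescaled bumps), and in the retained region the relevant intervals $I_{x,t}$ all have size in $[|I|/M,\,|I|M]$ and center within $O(M|I|)$ of $c(I)$; only those scales are seen, and for them $\omega_{I_{x,t}}(A')$ does decay as $I$ leaves $\mathcal{I}_{M'}$. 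With this correction the argument goes through, but the decay mechanism is the combination of truncation plus $\CMO$-oscillation at comparable scales, not a blanket supremum over all subintervals of $CI$.

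Finally, two small remarks: your claim that smallness of $\|A'\|_\infty$ is needed for $|(x-t)+\tfrac{i}{2}\Phi|\approx|x-t|$ is unnecessary for the size bound (the imaginary perturbation always gives $|(x-t)+\tfrac{i}{2}\Phi|\geq|x-t|$), though it is needed for the $\partial_hG$ estimate as noted above; and to apply Theorem \ref{Mainresult} one should say a word about the a priori $L^2$-boundedness of $T_K$ (Coifman--McIntosh--Meyer for the Cauchy integral plus boundedness of $H_\Gamma$ on Lipschitz graphs suffices), which both you and the paper implicitly use.
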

Moving over to the real line, we have formally that
\begin{equation} \label{gammatoReq}
(T_\Gamma f- H_\Gamma f)(z(t)) =  \sum_{n = 1}^\infty \left(\frac{-i}{2}\right)^n T_n\left( f \cdot \sqrt{1+|A'|^2} \right)(t),
\end{equation}
 where $T_n: L^p(\R) \to L^p(\R)$ is the operator associated to the kernel 
\begin{equation*}
K_n(x,t) = \frac{(A(x)-A(t) - 2A(x-\frac{1}{4}(x-t)) + 2A(x-\frac{3}{4}(x-t)))^n}{(x-t)^{n+1}}.
\end{equation*}
The expression for $K_1$ is reminiscent of a double difference of $A$. Operators associated to such kernels have received attention by Coifman and Meyer \cite{CoifMey78}. Note that $K_n$ is anti-symmetric for each $n$. In what follows, we will prove that each $K_n$ is a compact Calder\'on-Zygmund kernel and that 
$T_{n}$ satisfies 
the weak compactness condition with appropriate bounds. Moreover, through an inductive procedure, we shall compute $T_n(1)$ and check its membership to $\CMO$. This way, we will deduce that $T_n$ is compact and 
obtain the existence of a constant $C > 0$ such that $\| T_n \|_{L^p \to L^p} \leq C^n \|A'\|_\infty^n$. Then, by setting $\eta = 2/C$, we will have finally proven Proposition \ref{applprop}.

Note that we may write
\begin{equation*}
K_n(x,t) = \frac{1}{x-t}\left( \frac{\int_t^xA'(z) \, dz}{x-t} - \frac{\int_{x-\frac{3}{4}(x-t)}^{x-\frac{1}{4}(x-t)} A'(z) \, dz}{\frac{1}{2}(x-t)} \right)^n.
\end{equation*}
The inner expression can be interpeted as the difference of two averages of $A'$. 
From the estimate
$$
|f_I - f_J| \leq \frac{1}{|J|} \int_J |f-f_I| \, dt \leq \frac{2}{|I|} \int_I | f - f_I| \, dt,
$$
where $J \subset I$ are two intervals such that $|I| = 2|J|$ and $f_I$ denotes the average of $f$ on $I$, we deduce
\begin{equation} \label{cmoineq}
| K_n(x,t) | \lesssim 2^n\| A' \|_{\BMO}^n\frac{1}{|x-t|}.
\end{equation}

Demonstrating the smoothness condition of Definition \ref{prodCZ} is more involved. Let $x,t,t' \in \R$ with $ 0 < 2|t-t'| \leq |x-t|$. We denote $G_n(x,t) = (x-t)^{n+1}K_n(x,t)$
for notational convenience and note that
\begin{multline*}
K_n(x,t) - K_n(x,t') =  \\ G_n(x,t') \left (\frac{(x-t')^{n+1}-(x-t)^{n+1}}{(x-t)^{n+1}(x-t')^{n+1}} \right) + \frac{G_n(x,t) - G_n(x,t')}{(x-t)^{n+1}} .
\end{multline*}
Regarding the first term of this decomposition, there exists, by the mean value theorem, a $\lambda$ between $t$ and $t'$, and therefore satisfying $|t-\lambda| < |t-t'|$, such that
\begin{multline*}
G_n(x,t') \left (\frac{(x-t')^{n+1}-(x-t)^{n+1}}{(x-t)^{n+1}(x-t')^{n+1}} \right) = \\ (n+1)\frac{G_n(x,t')}{(x-t')^n}\frac{(x-\lambda)^n}{(x-t)^n} \frac{t-t'}{(x-t)(x-t')}.
\end{multline*}
For $M > 0$, let 
\begin{equation*}
F_{1,n}(M) = \sup_{\substack{I_{x,t} \in \mathcal{I}_M^c \\ 2|t-t'| \leq |x-t|}} \left |\frac{G_n(x,t')}{(x-t')^n} \right|,
\end{equation*} 
where $I_{x,t}$ is the interval with endpoints $x$ and $t$, and $\mathcal{I}_M$ is the set of intervals $I$ with center $c(I)$ such that $2^{-M} < |I| < 2^M$ and $|c(I)| < \frac{M}{2}$. Clearly $F_{1,n}$ is decreasing, and from the assumption that $A' \in \CMO$ in conjunction with the estimate \eqref{cmoineq} it follows that $\|F_{1,n}\|_\infty \lesssim 2^n \|A'\|^n_{\BMO}$ and $\lim_{M \to \infty} F_{1,n}(M) = 0$. This gives us control of the first term, 
\begin{multline*}
\left | G_n(x,t') \left (\frac{(x-t')^{n+1}-(x-t)^{n+1}}{(x-t)^{n+1}(x-t')^{n+1}} \right) \right | \\ \lesssim (n+1) \left( \frac{3}{2} \right)^n F_{1,n}\left( \max \left( \left| \log_2(|x-t|) \right|, |x+t| \right) \right) \frac{|t-t'|}{|x-t|^2}.
\end{multline*}

To deal with the second term we will consider the cases $n=1$ and $n \geq 2$ separately. Suppose first that $n \geq 2$. Applying the mean value theorem, there exists a $\lambda$ with
\begin{equation*}
|A(t) + 2A(x-\frac{1}{4}(x-t)) - 2A(x-\frac{3}{4}(x-t)) - \lambda| < |G_1(x,t) - G_1(x,t')|
\end{equation*}
and such that
\begin{equation*}
\frac{G_n(x,t) - G_n(x,t')}{(x-t)^{n+1}} = n\frac{G_1(x,t) - G_1(x,t')}{t-t'}\frac{(A(x)-\lambda)^{n-1}}{(x-t)^{n-1}} \frac{t-t'}{(x-t)^2}.
\end{equation*}
At this point the condition $A' \in L^\infty$ comes into play, since it is necessary for estimating the first factor;
\begin{equation*}
\left| \frac{G_1(x,t) - G_1(x,t')}{t-t'} \right| \lesssim \| A' \|_\infty.
\end{equation*}
On the other hand, introducing
\begin{equation*}
F_{2,n}(M) = \sup_{\substack{I_{x,t} \in \mathcal{I}_M^c \\ 2|t-t'| \leq |x-t|}} \left | \frac{A(x)-\lambda}{x-t} \right|^{n-1},
\end{equation*}
we have that $A' \in \CMO$ again implies that $\| F_{2,n} \|_\infty \lesssim 4^n \|A'\|_{\BMO}^n$ and $\lim\limits_{M \to \infty} F_{2,n}(M) = 0$. Therefore,
\begin{multline*}
\left |  \frac{G_n(x,t) - G_n(x,t')}{(x-t)^{n+1}} \right | \lesssim \\ n \| A' \|_\infty F_{2,n}\left( \max \left( \left| \log_2 (|x-t|) \right|, |x+t|\right) \right) \frac{|t-t'|}{|x-t|^2}.
\end{multline*}

When $n=1$, the previous argument fails. Instead, we pick a $\delta$, $0 < \delta < 1$, and write
\begin{equation*}
\left| \frac{G_1(x,t) - G_1(x,t')}{(x-t)^2} \right| = \left| \frac{G_1(x,t) - G_1(x,t')}{t-t'} \right | \frac{|t-t'|^{1-\delta}}{|x-t|^{1-\delta}} \frac{|t-t'|^\delta}{|x-t|^{1+\delta}}.
\end{equation*}
Define 
\begin{equation*}
F_{2,1}(M) = \sup_{\substack{I_{x,t} \in \mathcal{I}_M^c \\ 2|t-t'| \leq |x-t|}} \left| \frac{G_1(x,t) - G_1(x,t')}{t-t'} \right | \frac{|t-t'|^{1-\delta}}{|x-t|^{1-\delta}}.
\end{equation*}
It is clear that $\| F_{2,1}\|_\infty \lesssim \|A'\|_\infty$. We prove now that $\lim_{M \to \infty} F_{2,1}(M) = 0$. For suppose that $\varlimsup_{M \to \infty} F_{2,1}(M) = \ell > 0$. Then there exists a sequence $(M_k)$ with $M_k \to \infty$ and corresponding sequences $(x_k), (t_k),$ and $(t'_k)$ such that $I_{x_k,t_k} \in \mathcal{I}_{M_k}^c$, $2|t_k-t'_k| \leq |x_k-t_k|$, and 
\begin{equation} \label{contrineq}
\left| \frac{G_1(x_k,t_k) - G_1(x_k,t'_k)}{t_k-t'_k} \right | \frac{|t_k-t'_k|^{1-\delta}}{|x_k-t_k|^{1-\delta}} > \ell/2.
\end{equation}
There could not exist a constant $C > 0$ such that $$\frac{|t_k-t'_k|}{|x_k-t_k|} \geq C$$ for all $k$, for then 
\begin{multline*} 
\left| \frac{G_1(x_k,t_k) - G_1(x_k,t'_k)}{t_k-t'_k} \right | \frac{|t_k-t'_k|^{1-\delta}}{|x_k-t_k|^{1-\delta}} \leq \\ \frac{2^{1-\delta}}{C} \left| \frac{G_1(x_k,t_k) - G_1(x_k,t'_k)}{x_k-t_k} \right | \to 0, \, \, k \to \infty, 
\end{multline*}
by the fact that $A' \in \CMO$. Hence it must be that $$ \varliminf_{k\to\infty} \frac{|t_k-t'_k|}{|x_k-t_k|} = 0.$$ This also contradicts \eqref{contrineq}, however, since the first factor of the left hand side is bounded, seeing as $A' \in L^\infty$. By this contradiction we conclude that $\lim_{M \to \infty} F_{1,2}(M) = 0$.

Appealing to these estimates and the anti-symmetry of $K_n$  we may easily construct a set of admissible functions $L_n$, $S_n$ and $D_n$ so that the conditions of Definition \ref{prodCZ} are fulfilled with $\delta = 1$ for $n \geq 2$ and every $\delta < 1$ for $n=1$. Hence, $K_n$ is a compact Calder\'on-Zygmund kernel.

We turn now to the verification of the weak compactness condition. For every compact interval $I$ with center $c(I)$, we introduce the kernel
\begin{equation*}
K_n^I(x,t) = |I|K_n(|I|x+c(I), |I|t+c(I)),
\end{equation*}
and note that 
\begin{equation} \label{KIeq}
|K_n^I(x,t)| \leq \frac{1}{|x-t|} F_{3,n} \left( \max \left( \left| \log_2 (|I||x-t|) \right|, ||I|(x+t) + 2c(I)| \right) \right),
\end{equation}
where 
\begin{equation*}
F_{3,n}(M) = \sup_{I_{x,t} \in \mathcal{I}_M^c} \left |\frac{G_n(x,t)}{(x-t)^n} \right|.
\end{equation*}
As before it is clear that $F_{3,n}$ is a decreasing function with $\|F_{3,n}\|_\infty \lesssim 2^n \|A'\|_{\BMO}^n$ and $\lim_{M \to \infty} F_{3,n}(M) = 0$. For $\phi \in \mathcal{S}(\R)$, we write $\phi_I(x) = |I|^{-1/2}\phi\left( \frac{x-c(I)}{|I|} \right)$. Given $\phi, \varphi \in \mathcal{S}(\R)$, we have by the anti-symmetry of $K_n$ that
\begin{align*}
2 \langle T_n \varphi_I, \phi_I \rangle &= \int_{\R^2} K_n(x,t) (\phi_I(x) \varphi_I(t) - \phi_I(t) \varphi_I(x) ) \, dx \, dt \\
&= \int_{\R^2} K_n^I(x,t) (\phi(x) \varphi(t) - \phi(t) \varphi(x) ) \, dx \, dt.
\end{align*}
Since $|K_n^I(x,t)| \lesssim 2^n |x-t|^{-1}$ uniformly in $I$, there exists for each pair ($\varphi$,$\phi$) a constant $C_{\varphi,\phi}$ depending only on a finite number of Schwarz class seminorms, with the following property: for every $\varepsilon > 0$ there is an $M \geq 1$, independent of $\varphi$ and $\phi$, such that 
\begin{equation*}
\left| \, \int\limits_{\{|x-t| > M\} \cup \{|x-t| < \frac{1}{M} \} \cup \{|x+t| > M \} } 
\hspace{-1.5cm} K_n^I(x,t) (\phi(x) \varphi(t) - \phi(t) \varphi(x) ) \, dx \, dt \right | \leq 2^nC_{\varphi,\phi} \varepsilon.
\end{equation*}
To see this, simply note that
\begin{equation*}
\frac{\phi(x)\varphi(t)-\phi(t)\varphi(x)}{x-t} \in \mathcal{S}(\R^2)
\end{equation*}
is a Schwarz function of two variables. Furthermore, in view of \eqref{KIeq} we have
\begin{multline} \label{WCCineq}
\left| \, \int\limits_{\{\frac{1}{M} < |x-t| < M\} \cap \{|x+t|<M\}} K_n^I(x,t) (\phi(x) \varphi(t) - \phi(t) \varphi(x) ) \, dx \, dt \right | \\ \leq C_n C'_{\varphi,\phi} F_{3,n} \left( \max \left( \log_2 (\frac{|I|}{M}), -\log_2 (|I|M), \frac{\rdist(I, \mathbb{D}_1)}{M} \right) \right),
\end{multline}
for some constants $C_n$ and $C'_{\varphi,\phi}$, depending only on $n$ and a finite number of seminorms of $\varphi$ and $\phi$, respectively. Note in particular that $|x+t| < M$ and $|c(I)| > M|I|$ imply that
$$||I|(x+t) + 2c(I)| > |c(I)| \gtrsim \frac{\rdist(I, \mathbb{D}_1)-1}{M}.$$
Together with the trivial facts that $|I||x-t| > |I|/M$ and $|I||x-t| < |I|M$ when $(x,t)$ lies in the domain of integration of \eqref{WCCineq}, and that $\frac{\rdist(I, \mathbb{D}_1)}{M} \leq 2$ when $|c(I)| \leq M|I|$, we obtain the desired estimate in \eqref{WCCineq} for an appropriate $C_n$.  We conclude that $T_n$ satisfies the weak compactness condition. For future reference we also record the implied bound on the weak boundedness constant of $T_n$ present in the above considerations. Namely
\begin{equation*}
|\langle T_n \varphi_I, \phi_I \rangle| \lesssim 2^n \|A'\|_{\BMO}^n C'_{\varphi,\phi}.
\end{equation*}

Finally, we shall show that $T_n(1)$ belongs to $\CMO$ by evaluating it inductively in a principal value sense. The justifications for these computations are analogous to those that appear in considerations of Cauchy type operators in connection with the classical $T(1)$ theory, see for example Christ \cite{Christ}. 

For $x$ with $|x| < r < R$ and $\varepsilon > 0$, integrate by parts to obtain that
\begin{multline*}
\int_{\substack{|t| < R \\ |t-x| > \varepsilon}} K_n(x,t) \, dt = \frac{1}{n}\left[\frac{G_n(x,t)}{(x-t)^n}\right]_{t=-R}^{t=R}  - \frac{1}{n} \left [ \frac{G_n(x,t)}{(x-t)^n}\right]_{t=x-\varepsilon}^{t=x+\varepsilon} + \\ \int_{\substack{|t| < R \\ |t-x| > \varepsilon}} \frac{G_{n-1}(x,t)}{(x-t)^n}(A'(t) + \frac{1}{2} A'(x-\frac{1}{4}(x-t)) - \frac{3}{2} A'(x-\frac{3}{4}(x-t))) \, dt,
\end{multline*}
with the understanding that $G_0 \equiv 1$. Splitting the latter integral into three parts according to its summands and making the linear changes of variables $x-t = 4(x - z)$ and $x-t = \frac{4}{3}(x-w)$ in the last two terms we find that
\begin{multline} \label{T1comp}
\int_{\substack{|t| < R \\ |t-x| > \varepsilon}} K_n(x,t) \, dt = \frac{1}{n}\left[\frac{G_n(x,t)}{(x-t)^n}\right]_{t=-R}^{t=R}  - \frac{1}{n} \left [ \frac{G_n(x,t)}{(x-t)^n}\right]_{t=x-\varepsilon}^{t=x+\varepsilon} \\ +  \int_{\substack{|t| < R \\ |t-x| > \varepsilon}} \frac{G_{n-1}(x,t)}{(x-t)^n} A'(t) \, dt +  \frac{2}{4^n}\int_{\substack{|x-4(x-z)| < R \\ |z-x| > \varepsilon/4}} \frac{G_{n-1}(x,x-4(x-z))}{(x-z)^n}A'(z) \, dz 
\\ -  2\left(\frac{3}{4}\right)^n \int_{\substack{|x-\frac{4}{3}(x-w)| < R \\ |w-x| > 3\varepsilon/4}} \frac{G_{n-1}(x,x-\frac{4}{3}(x-w))}{(x-w)^n}  A'(w) \, dw
\end{multline}
Since $A' \in \CMO$ it is clear that the first two terms tend to zero, uniformly for $|x| < r$, as $\varepsilon \to 0$ and $R \to \infty$. 

Suppose now that $n=1$. Seeing as $r$ is arbitrary, we then find in the limit that
\begin{equation*}
T_1(1) = H(A') + \frac{1}{2}H(A') - \frac{3}{2}H(A') = 0.
\end{equation*}
where $H$ denotes the usual Hilbert transform. Note that the Hilbert transform is bounded as a map $H: \CMO \to \CMO$. 

At this point we have verified the compactness of $T_1$ on $L^p$, $1 < p < \infty$, and as a map $T_1 : L^\infty_\CMO \to \CMO$. We now proceed with the inductive step to prove the same for $T_n$, $n \geq 2$. In this case passing to the limit in \eqref{T1comp} gives
\begin{equation}\label{Tn1eq}
T_n(1) = T_{n-1}(A') + \frac{2}{4^n}\widetilde{T}_{n-1}(A') - 2\left(\frac{3}{4}\right)^n \widehat{T}_{n-1}(A'),
\end{equation}
 where $\widetilde{T}_{n-1}$ and $\widehat{T}_{n-1}$ are the operators associated to the kernels 
\begin{align*}
\widetilde{K}_{n-1}(x,z) &= \frac{G_{n-1}(x,x-4(x-z))}{(x-z)^n}, \\ \widehat{K}_{n-1}(x,w) &= \frac{G_{n-1}(x,x-\frac{4}{3}(x-w))}{(x-w)^n}.
\end{align*}
These kernels are very similar in character to $K_{n-1}$, and all computations performed up to this point can be repeated with minor modifications for them. In particular, $\widetilde{K}_{n-1}$ and $\widehat{K}_{n-1}$ are compact Calder\'on-Zygmund kernels, $\widetilde{T}_{n-1}$ and $\widehat{T}_{n-1}$ satisfy the weak compactness condition, $\widetilde{T}_{1}(1) = \widehat{T}_{1}(1) = 0$ and for $n \geq 2$, both $\widetilde{T}_{n}(1)$ and $\widehat{T}_{n}(1)$ are linear combinations of $T_{n-1}(A')$, $\widetilde{T}_{n-1}(A')$ and $\widehat{T}_{n-1}(A')$ with coefficients exponential in $n$. 

Using these results and the fact that $A' \in L^\infty \cap \CMO$, we obtain by induction that $T_n: L^p \to L^p$, $1 < p < \infty$ and $T_n: L^\infty_\CMO \to \CMO$ are compact maps for $n \geq 1$. Furthermore, by inspecting the constants in the above calculations and appealing to classical $T(1)$ theory \cite{Christ}, we obtain bounds on the corresponding operator norms; there exists a constant $C > 0$ such that
\begin{equation*}
\| T_n \|_{L^p \to L^p} \leq C^n \|A'\|_\infty^n, \quad \| T_n \|_{L^\infty_\CMO \to \CMO} \leq C^n \|A'\|_\infty^n.
\end{equation*}
We conclude that $T_\Gamma-H_\Gamma:L^p(\Gamma)\to L^p(\Gamma)$ is compact when $\|A'\|_\infty < 2/C$, hence finishing the proof of Proposition \ref{applprop}. \qed

\bibliographystyle{plain}

\end{document}